\crefname{hypothesis}{Hypothesis}{Hypotheses}
\title{A four-operator splitting algorithm for nonconvex and nonsmooth optimization\thanks{
{This work was funded by JSPS KAKENHI (23H03351,23K19981,24K20845).}}}
\author{Jan Harold Alcantara\thanks{Center for Advanced Intelligence Project, RIKEN
		(\email{janharold.alcantara@riken.jp}).}
	\and Ching-pei Lee\thanks{Department of Advanced Data Science, Institute of Statistical Mathematics, Tokyo, Japan 
		(\email{chingpei@ism.ac.jp})}
	\and Akiko Takeda\footnotemark[2] \thanks{Department of Mathematical Informatics, Graduate School of Information Science and Technology, The University of Tokyo, Tokyo, Japan (takeda@mist.i.u-tokyo.ac.jp)}}
 \newcommand{\ds}{\displaystyle}
\newcommand{\Lh}{L_{_{\rm{h}}}}
\newcommand{\Lf}{L_{_{\rm{f}}}}
\newcommand{\rhof}{\rho_{_{\rm{f}}}}
\newcommand{\rhoh}{\rho_{_{\rm{h}}}}
\newcommand{\rhop}{\rho_{_{\rm{p}}}}
\newcommand{\sigmaf}{\sigma_{_{\rm{f}}}} 
\newcommand{\sigmaphi}{\sigma_{\phi}}
\newcommand{\sigmah}{\sigma_{_{\rm{h}}}}
\newcommand{\rhophi}{L_{\phi}}
\newcommand{\bftab}{\fontseries{b}\selectfont}
\newcommand{\plus}{{\scriptscriptstyle +}}
\newcommand{\cpsolved}[1]{}
\newcommand{\jhsolved}[1]{}
\newcommand{\memosolved}[2]{}
\newcommand{\comment}[1]{}
\renewcommand{\comment}[1]{}
\newcommand{\conditionalstar}[1]{%
  \ifdefined\submit
    *
  \else
    #1%
  \fi
}
  \newcommand{\cpmodifyok}[1]{#1}
 \newcommand{\alert}[1]{\textcolor{magenta}{#1}}
  \newcommand{\alert}[1]{#1}
\def \rla{\right\rangle}
\def \lla{\left\langle}
\def\citep{\cite}
\def\citet{\cite}
\renewcommand{\Re}{{\rm I}\! {\rm R}}
\DeclareMathOperator*{\argmin}{arg\,min}
\DeclareMathOperator*{\Fix}{Fix}
\DeclareMathOperator*{\dist}{dist}
\DeclareMathOperator*{\dom}{dom}
\newcommand{\norm}[1]{{\left\|{#1}\right\|}}
\newcommand{\prox}{{\rm prox}}
\crefname{assumption}{Assumption}{Assumptions}
 \numberwithin{equation}{section}
\newtheorem{theorem}{Theorem}[section]
\newtheorem{proposition}[theorem]{Proposition}
\newtheorem{lemma}[theorem]{Lemma}
\newtheorem{assumption}[theorem]{Assumption}
\theoremstyle{definition} 	
\newtheorem{remark}[theorem]{Remark}
\def\TheTitle{A four-operator splitting algorithm for nonconvex and nonsmooth optimization}
 \title{\TheTitle}
 \date{\today}
\author{Jan Harold Alcantara\thanks{\url{janharold.alcantara@riken.jp}.  Center for Advanced Intelligence Project, RIKEN, Tokyo, Japan.} \qquad Ching-pei Lee\thanks{\url{chingpei@ism.ac.jp}.  Department of Advanced Data Science, Institute of Statistical Mathematics, Tokyo, Japan.}
\qquad Akiko Takeda\thanks{\url{takeda@mist.i.u-tokyo.ac.jp}.
Department of Mathematical Informatics, Graduate School of Information Science and Technology, University of Tokyo,
Tokyo, Japan, and Center for Advanced Intelligence Project, RIKEN, Tokyo, Japan}
}
\begin{document}

\maketitle

 \begin{abstract}
In this work, we address a class of nonconvex nonsmooth optimization
problems where the objective function is the sum of two smooth
functions (one of which is proximable) and two nonsmooth functions
(one proper, closed and proximable, and the other continuous and weakly concave). We
introduce a new splitting algorithm that extends the Davis-Yin
splitting (DYS) algorithm to handle such four-term nonconvex nonsmooth
problems.  We prove that with appropriately chosen stepsizes, 
our algorithm exhibits global subsequential convergence to stationary
points with a stationarity measure converging at a global
rate of $1/T$, where $T$ is the number of iterations. When
specialized to the setting of the DYS algorithm, our results allow for
larger stepsizes compared to existing bounds in the literature.
Experimental results demonstrate the practical applicability and
effectiveness of our proposed algorithm. \\

\ifdefined\submit
\else 
\noindent {\bf Keywords.}\ operator splitting; Davis-Yin splitting; nonconvex optimization; nonsmooth optimization
\fi 
   
\end{abstract}

\ifdefined\submit 
\begin{keywords}
operator splitting, Davis-Yin splitting, nonconvex optimization, nonsmooth optimization
\end{keywords}

\begin{MSCcodes}
90C26, 90C30
\end{MSCcodes}
\else 
\fi 

\section{Introduction}
We consider the nonsmooth and nonconvex problem
\begin{equation}
	\min_{x\in\Re^n} ~\Psi (x) \coloneqq f(x) + g(x) + h(x) + p(x)
	\label{eq:problem}
\end{equation}
under the following assumptions.
\begin{assumption} 
	\label{assume:problem}
	The functions, $f$, $g$, $h$ and $p$ satisfy the following:\footnote{\cpmodifyok{See
	\cref{sec:prelim} for the definitions of $L$-smooth,
$\rho_g$-weakly convex, and proximal mapping.}}
	\begin{enumerate}[(a)]
		\item $f:\Re^n\to(-\infty,\infty)$  is an $\Lf$-smooth function and is ``proximable'', in the sense that its proximal mapping either has a closed form or is easily computable;  
        \alert{\item $g:\Re^n\to (-\infty,\infty]$ is a proper, closed, and proximable function;}
        \item $h:\Re^n\to (-\infty,\infty)$ is an $\Lh$-smooth function; and 
		\item $p:\Re^n\to \Re$ is continuous (possibly nonsmooth) on an open set containing the domain of $g$ such that 
  $-p$ is \alert{$\rhop$}-weakly convex with $\alert{\rhop}\geq
  0$\cpsolved{\st{; that is, $\frac{\alert{\rhop}}{2}\|\cdot\|^2 - p$
  is convex}}.
	\end{enumerate}
\end{assumption}

A notable special case of  \eqref{eq:problem} is the three-term
optimization problem, where $p \equiv 0$. The Davis-Yin splitting (DYS)
algorithm, introduced in \citep{DavisYin17}, tackles this problem for
convex functions $f$, $g$, and $h$, assuming $h$ additionally meets
\cref{assume:problem}(c). To extend the algorithm's applicability to
nonconvex problems, \cite{BianZhang21} leveraged techniques from
\citep{li2016douglas} to show that with a sufficiently small stepsize, the DYS algorithm achieves global subsequential convergence.
This requires $f$, $g$ and $h$ to satisfy conditions (a), (b) and (c) of \cref{assume:problem}, respectively.
However, despite not requiring the functions to be convex,
this extension of the DYS algorithm is still limited as it can only admit at most one nonsmooth function.

In scenarios involving objective functions comprised of multiple
nonsmooth, nonconvex terms $-$ such as those encountered in nonconvex
fused Lasso \citep{parekh2015convex}, $\ell_{1-2}$  regularized
optimization problems \citep{yin2015minimization}, simultaneous sparse
and low-rank optimization problems \citep{richard2012estimation},
among others $-$ the DYS algorithm becomes inapplicable. Indeed,
the problem setting of the DYS framework is generally not suited for
several classes of nonsmooth nonconvex optimization problems such as
the following ones.
\ifdefined\submit 
\paragraph{Nonsmooth nonconvex optimization with concave component}
\else 
\paragraph{Nonsmooth nonconvex optimization with concave component.}
\fi
The DYS algorithm is not applicable to problems that  take the form
\begin{equation}
    \min _{x\in \Re^n} ~F(x) + G(x) - H(x),
    \label{eq:dc}
\end{equation}
where $F$ is $L$-smooth, $G$ is proper and closed, and $H$ is convex. The limitation of the DYS algorithm arises from the
potential nonsmoothness of the concave component $-H$. Meanwhile, by setting $f=F$, $g=G$, $h\equiv 0$ and $p\coloneqq-H$ in \eqref{eq:problem}, we satisfy \cref{assume:problem} with  $\Lh = \rhop = 0$.
\ifdefined\submit
\paragraph{General nonsmooth nonconvex regularized optimization}
\else 
\paragraph{General nonsmooth nonconvex regularized optimization.}
\fi 
Another example is the general class of regularized problems considered in \citep{LiuPongTakeda19}, given by 
\[ \min_{x\in \Re^n}~h(x) + g(x) + \sum_{i=1}^r P_i(A_ix),\]
where $g$ and $P_i$ are nonsmooth, nonconvex and nonnegative
regularizers, and  $A_i\in \Re^{n_i\times n}$. This difficult class of
optimization problems was addressed by \citet{LiuPongTakeda19} through the use of the Moreau envelope $M_{\lambda_i P_i}$ of each $P_i$ to obtain an approximate problem: 
    \begin{equation}
	\min_{x\in \Re^n}~h(x) + g(x) + \sum_{i=1}^r M_{\lambda_i P_i}(A_ix).
			\label{eq:moreau_approximate}
		\end{equation}
However, note that even after this reformulation, the DYS algorithm is
still not applicable to \cref{eq:moreau_approximate}, as the third
term is in general a nonsmooth function due to the nonconvexity of the
$P_i$'s. On the other hand, we can utilize the fact (see \cite[Proposition
3]{Lucet2006}) that for any function $P$, 
		\begin{equation}
		 \frac{1}{2\lambda}\|x\|^2  - M_{\lambda P} ( x) =  \sup_{y\in \dom (P)} \left\lbrace \frac{1}{\lambda
		}x^\top y- \frac{1}{2\lambda}\|y\|^2-P(y) \right\rbrace.
		\label{eq:dc_moreau}
		\end{equation}
By noting the convexity of the function \memosolved{AT}{Write "the optimal value function with respect to $x$"? Or it is obvious from this context? Not necessary to mention something about $P$, like "any nonnegative proper closed function" or something?}\jhsolved{I think the ``optimal value'' part is clear. And checking again \cite[Proposition 3]{Lucet2006}, there's no requirement for $P$ to be nonnegative. It can be an arbitrary function. } on the right-hand side of
\eqref{eq:dc_moreau} (being the supremum of affine (convex) functions)\jhsolved{If a justification as to why the RHS is convex is needed, how about we say ``being the supremum of affine (convex) functions'' as a more fundamental reason?}\memosolved{AT}{this reason: being the supremum of affine (convex) functions is easy to understand for me}\cpsolved{OK, I am also ok with that.},
	we may then fit \eqref{eq:moreau_approximate} in the form \eqref{eq:problem} by setting $f\equiv 0$ and $p \coloneqq \sum_{i=1}^r M_{\lambda_i P_i}\circ A_i$ so that \cref{assume:problem}(d) holds with $\rhop = \sum_{i=1}^r \lambda_i^{-1}$.
\ifdefined\submit 
\else 
\paragraph{Nonconvex feasibility problems.} We can also draw examples from feasibility problems reformulated as optimization problems: Let $A,B,C$ and $D_i$ for $i\in \{ 1,2,\dots,r\}$ be closed nonempty sets where $A$ and $C$ are convex, and the sets $B$ and $D_i$ $(i\in \{1,\dots,r\})$ are not necessarily convex. The \textit{feasibility problem} is given by:
	\begin{equation*}
	\text{Find}~x\in A\cap B\cap C \cap \bigcap_{i=1}^r D_i.
	\label{eq:feasibility}
	\end{equation*}
By defining $d_S(x) \coloneqq \inf_{w\in S} \norm{w-x}$ for any
	set $S$, this problem can be reformulated as the following problem that
 conforms the form of \eqref{eq:problem}:
	\begin{equation}
	    \min_{x\in \Re^n}~ \underbrace{\frac{1}{2}d^2_A(x)}_{f(x)} + \underbrace{\delta_B(x)}_{g(x)} + \underbrace{\frac{1}{2}d^2_C(x)}_{h(x)}  + \underbrace{\sum_{i=1}^r \frac{1}{2}d^2_{D_i}(x)}_{p(x)}.
     \label{eq:feasibility_optimization}
	\end{equation}
The original setting of the DYS algorithm in \citep{DavisYin17} can
handle this problem when $r=0$. On the other hand, the extended
setting considered in \citep{BianZhang21} can handle the case
$B=\Re^n$ and $r=1$ (\textit{i.e.}, the feasibility problem with two convex
sets and one nonconvex set), but it requires a very small stepsize.
Meanwhile, by the convexity of $A$, $B$ and $C$, we see that
\cref{assume:problem}(a), (b) and (c) hold for
\cref{eq:feasibility_optimization}. The function $p$, on the other hand, is a nonsmooth function due to the nonconvexity of each $D_i$. Nevertheless, noting that the
	half-squared distance function is the Moreau envelope of the
	indicator function, we have from \eqref{eq:dc_moreau} that $p$
	satisfies \cref{assume:problem}(d) with $\rhop = r$.
	\memosolved{AT}{$r$ is the number of summations for the last term? $\rightarrow$ Got it. Thanks}\cpsolved{Each distance
	function is $1$-smooth, so indeed it is what you said.}\jhsolved{Actually, here, each function $\frac{1}{2}d^2_{D_i}(\cdot)$ is not a smooth function, but it satisfies the condition in \cref{assume:problem}(d): we have $\frac{1}{2}\norm{x}^2 - \frac{1}{2}d^2_{D_i}(x) = \frac{1}{2}\norm{x}^2 - M_{\delta_{D_i}}(x)$, which is convex by \eqref{eq:dc_moreau}.}
 \fi 
\medskip 

\alert{We note that the optimization problem \eqref{eq:problem} under \cref{assume:problem} can be reformulated into the form \eqref{eq:dc} by setting $H \coloneqq p -\frac{\rhop}{2}\norm{\cdot}^2 $ and choosing either 
(i) $F \coloneqq f+h+\frac{\rhop}{2}\norm{\cdot}^2 $, $G\coloneqq g$, or (ii) $F \coloneqq f+h $, $G\coloneqq g+\frac{\rhop}{2}\norm{\cdot}^2$. Under such reformulations, one may apply existing algorithms such as the generalized proximal point (GPP) method \cite{An2016}, which coincides with the proximal difference-of-convex (PDC) algorithm \cite{WenChenPong2018} when $F$ and
$G$ are convex.}

\alert{However, one of the advantages of the framework we propose is
	that it enables a direct treatment of the problem
	\eqref{eq:problem} without requiring such reformulations, and it
	takes advantage of the decomposition of the smooth term of the
	objective into $f+h$,  where $f$ and $h$ possess the structures
	specified in \cref{assume:problem}(a) and (c),
	respectively.\footnote{\alert{In practice, when both terms satisfy
		\cref{assume:problem}(a), it is often more practical to assign
	more structure to $f$, as the proximal step tends to exploit more
information than the gradient step.}}
\cpsolved{It looks to me that these 2 paragraphs should be in
not the contribution section but the previous section. For
contributions we should be more succinct and to the point.
It can be combined with a guideline for choosing $f$ and $h$
(basically taking as much to $f$ as possible as prox gouges more
information than grad, I think) suggested by Akiko.}}\jhsolved{I now moved the 2 paragraphs you mentioned to this section. I also added a sentence but as a footnote only. What do you think?}
\subsection{Contributions}
\alert{In this paper, we propose an extension of the Davis–Yin splitting (DYS) algorithm to handle optimization problems involving two nonsmooth terms, thereby expanding its application to a broader class of nonconvex and nonsmooth problems, including those described earlier. Specifically, we develop a four-operator splitting algorithm for the general problem \eqref{eq:problem} under the setting described in \cref{assume:problem}. Our main contributions are summarized below:
\begin{enumerate}[(I)]
    \item We prove that our algorithm, which includes the DYS
		splitting as a special case, is globally subsequentially
		convergent to stationary points under appropriately chosen
		stepsizes. Furthermore, a first-order optimality measure is
		shown to converge at a \cpmodifyok{global} rate of
		$1/\cpmodifyok{T}$, \cpmodifyok{where $T$ is the number of
		iterations}.
    \item A key theoretical contribution of this work is the
		derivation of upper bounds on the \cpmodifyok{stepsizes}\cpsolved{For
			consistency, in other places we all used stepsize instead
		of step size.}
		that guarantee convergence. When specialized to the case of $p
		\equiv 0 $, our results yield \cpmodifyok{stepsize} estimates
		for the DYS algorithm that are significantly larger than
		existing bounds in \citep{BianZhang21}. As a practical
		consequence, our experiments show that these larger \cpmodifyok{stepsizes} lead to faster convergence of the algorithm.
    \item[(III)]  Our experiments demonstrate that leveraging the structures of $f$ and $h$ separately leads to faster convergence compared to GPP/PDC, which treats the smooth term as a single component. Hence, while reformulation into \eqref{eq:dc} is possible, our direct approach offers practical advantages due to utilization of individual structures. 
\end{enumerate}}

\alert{During the final stage of preparing the first draft of this
	paper, we came across the preprint \cite{Dao2024}, which
	introduces the ``doubly relaxed forward-Douglas–Rachford (DRFDR)''
	splitting algorithm. This method applies to a special case of our
	problem setting where $\rhop=0$ in \cref{assume:problem}(d). When
	$\rhop>0$, we note that DRFDR can also be applied to
	\eqref{eq:problem} by subtracting $\frac{\rhop}{2}\norm{\cdot}^2$
	from $p$, and then adding $\frac{\rhop}{2}\norm{\cdot}^2$ to
	either $f$, $g$ or $h$. However, this transformation introduces an
	additional step, and determining where it is most practical to add this
	quadratic term is not straightforward. In
	contrast, our proposed algorithm accommodates the case $\rhop>0$
	without requiring such transformations. More importantly, our
	convergence analysis relies on entirely different proof
	techniques, enabling us to derive larger stepsize bounds compared
	to those in \cite{Dao2024}. Our numerical experiments demonstrate that these larger stepsizes lead to improved practical performance, highlighting another key distinction between our work and \cite{Dao2024}.}

\subsection{Organization} This paper is organized as follows. In \cref{sec:prelim}, we summarize
important definitions, notations and known results that we will use in this
paper. We propose our algorithm and establish its global subsequential
convergence and convergence rates in \cref{sec:algorithms}. Experiments to demonstrate the applicability and efficiency of our method are presented in \cref{sec:experiments}, and concluding remarks are given in \cref{sec:conclusion}. 

\section{Preliminaries}\label{sec:prelim}
Throughout the paper, $\Re^n$ denotes the $n$-dimensional Eu\-clid\-ean space endowed with the
inner product $\lla \cdot, \cdot \rla$, and we denote its induced
norm by $\|\cdot \|$. The set $(-\infty,\infty]$ is the extended
real-line, and we adopt the conventions that $\frac{a}{\infty}=0$, $\frac{a}{0}=\infty$ for any $a\neq 0$, and $\frac{\infty}{\infty}=1$. 

Let  $\phi:\Re^n\to (-\infty,\infty]$ be an extended-valued function. The \emph{domain} of $\phi$ is given by the set $\dom (\phi) = \{ x\in\Re^n : \phi(x)<\infty\}$. We say that $\phi$ is a \emph{proper} function if $\dom (\phi)\neq \emptyset$, and that $\phi$ is \emph{closed} if it is lower semicontinuous. 
%
$\phi$ is said to be a \emph{$\sigmaphi$-convex} function if
$\phi-\frac{\sigmaphi}{2}\norm{\cdot}^2$ is convex for some
$\sigmaphi\in\Re$.  If $\sigmaphi>0$, then $\phi$ is
\emph{$\sigmaphi$-strongly convex}. If $\sigmaphi\leq 0 $, we denote
$\rho_{\phi}\coloneqq -\sigmaphi$ and call $\phi$ a
\emph{$\rho_{\phi}$-weakly convex function}. In other words, $\phi$ is
$\rho_{\phi}$-weakly convex for $\rho_{\phi} \geq 0$ if $\phi+\frac{\rho_{\phi}}{2}\norm{\cdot}^2$ is convex.

\medskip 
The \emph{subdifferential} of $\phi$ at a point $x\in \dom (\phi)$  is defined as
\begin{equation}
    \partial \phi (x) \coloneqq \left\lbrace \xi \in \Re^n : \exists
	\{(x^k,\xi^k)\} \text{ such that }
	x^k\xrightarrow{\phi} x,  ~\xi^k\in \hat{\partial}\phi(x^k),
	~\text{and}~\xi^k\to\xi\right\rbrace,
	\label{eq:generalsubdiff}
\end{equation}
where $x^k\xrightarrow{\phi} x$ means $x^k\to x$ and $\phi(x^k)\to \phi(x)$, and
\begin{equation*}
    \hat{\partial}\phi (x) \coloneqq \left\lbrace \xi\in\Re^n:
	\liminf_{\bar{x}\to x, \bar{x}\neq x}\frac{\phi(\bar{x}) - \phi(x)
	- \lla \xi,\bar{x}-x\rla }{\norm{\bar{x}-x}} \geq 0\right\rbrace.
\end{equation*}
When $\phi$ is convex, \cref{eq:generalsubdiff} coincides with the classical subdifferential in convex analysis:
\begin{equation*}
\partial \phi(x) = \{\xi\in \Re^n : \phi(y) \geq \phi(x) + 
\lla \xi, y-x \rla,
~\forall y\in \Re^n \}.
\label{eq:partial_phi_convex}
\end{equation*} If $\phi$ is continuously differentiable, the subdifferential reduces to a singleton containing the gradient of $\phi$. We also note that the definition of the subdifferential gives the following property:
\begin{equation}
\left\lbrace \xi\in\Re^n : \exists \{ (x^k,\xi^k) ~\text{such that} ~x^k\xrightarrow{\phi} x,~\xi^k\in \partial \phi (x^k),~\text{and}~\xi^k \to \xi \right\rbrace \subseteq \partial \phi (x).
\label{eq:generalsubdiff_consequence}
\end{equation}
For a proper and closed function $\phi$, we define the \emph{proximal mapping} of $\phi$ as
\begin{equation}
    \prox_{\gamma \phi} (x) \coloneqq \argmin_{w\in \Re^n}\, \phi(w) + \frac{1}{2\gamma}\|w-x\|^2, \quad \gamma>0.
	\label{eq:prox} 
\end{equation}
For a set $S\subseteq \Re^n$, we define $\prox_{\gamma \phi} (S)
\coloneqq \bigcup_{x\in S}\prox_{\gamma \phi}(x)$. From the optimality condition of \eqref{eq:prox}, we have 
\begin{equation}
y\in \prox_{\gamma \phi}(x) \quad \Longrightarrow \quad x-y \in \gamma \partial \phi (y),
\label{eq:prox_optimality}
\end{equation}
and the converse holds if $\phi + \frac{1}{2\gamma}\norm{\cdot}^2$ is convex.
We also use the notation $T_{\gamma \phi}: \Re^n
\rightrightarrows \Re^n$ to denote 
\ifdefined\submit 
$T_{\gamma \phi}\coloneqq Id - \gamma \partial \phi,$
\else
\begin{equation*}
	T_{\gamma \phi}\coloneqq Id - \gamma \partial \phi,
\end{equation*} 
\fi 
where $Id$ is the identity map on $\Re^n$.
Given a
point $x$ and a subgradient $\xi\in \partial
\phi(x)$,\cpsolved{``generalized'' is not defined. Given the descriptions in
the previous section, should we just call it subgradient?} for any
$\gamma > 0$, we define 
\[ Q_ {\gamma \phi}(w; x,\xi) \coloneqq \phi(x) + \lla \xi , w-x \rla
+ \frac{1}{2\gamma}\norm{w-x}^2,\quad \forall w\in \Re^n. \]
When $\phi$ is smooth, we write $Q_ {\gamma \phi}(w; x)$, and $\xi$ is understood to be equal to $\nabla \phi(x)$.

Let $\phi:\Re^n\to\Re$ be a continuously differentiable function $\phi:\Re^n\to\Re$. We say that $\phi$ is \emph{$L_{\phi}$-smooth} if its gradient satisfies 
\ifdefined\submit 
$\norm{\nabla \phi (x) - \nabla \phi (y)}\leq L_{\phi}\norm{x-y}$ for all $ x,y\in \Re^n.$
\else 
\[ \norm{\nabla \phi (x) - \nabla \phi (y)}\leq L_{\phi}\norm{x-y}, \quad \forall x,y\in \Re^n.\]
\fi 
We collect some properties of $L_{\phi}$-smooth functions. 

\begin{lemma}
    Let $\phi:\Re^n\to\Re$ be an $L_{\phi}$-smooth function. Then for any $x,y\in \Re^n$, 
    \begin{enumerate}[(a)]
        \item  $ \left| \phi(y)-\phi(x)-\lla \nabla \phi(x) , y-x \rla
			\right|  \leq \frac{L_{\phi}}{2}\norm{y-x}^2$. \quad (Descent Lemma)
        \item If in addition, $\phi$  is $\rho_\phi$-weakly convex, then for
			any $L\geq \rhophi$ such that $L > \rho_\phi$, we have \ifdefined\submit 
            $\phi(y)-\phi(x)-\lla \nabla \phi(x) , y-x \rla  \geq
				\frac{1}{2(L-\rho_{\phi})}\norm{\nabla \phi(y)-\nabla
				\phi(x)}^2 -
				\frac{\rho_{\phi}L}{2(L-\rho_{\phi})}\norm{y-x}^2.$
                \else 
			\[
				\phi(y)-\phi(x)-\lla \nabla \phi(x) , y-x \rla  \geq
				\frac{1}{2(L-\rho_{\phi})}\norm{\nabla \phi(y)-\nabla
				\phi(x)}^2 -
				\frac{\rho_{\phi}L}{2(L-\rho_{\phi})}\norm{y-x}^2.
			\]
                \fi 
    \end{enumerate}
    \label{lemma:properties_Lsmooth}
\end{lemma}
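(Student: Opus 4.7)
For part (a), I would invoke the fundamental theorem of calculus to write
$\phi(y) - \phi(x) - \langle \nabla\phi(x), y-x\rangle = \int_0^1 \langle \nabla\phi(x+t(y-x)) - \nabla\phi(x), y-x\rangle\, dt$,
then bound the integrand in absolute value by $L_\phi t\|y-x\|^2$ using Cauchy--Schwarz together with the $L_\phi$-Lipschitzness of $\nabla\phi$, and integrate. This simultaneously yields the upper and lower bounds encoded in the absolute value.

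For part (b), the key idea is to ``convexify'' $\phi$ by introducing the auxiliary function $\psi \coloneqq \phi + \tfrac{\rho_\phi}{2}\|\cdot\|^2$. By the definition of $\rho_\phi$-weak convexity, $\psi$ is convex; and since $\nabla\psi(w) = \nabla\phi(w) + \rho_\phi w$ is Lipschitz with constant at most $L_\phi + \rho_\phi \leq L + \rho_\phi$, the function $\psi$ is additionally $(L+\rho_\phi)$-smooth. I would then apply two standard results for convex smooth functions to $\psi$: the Bregman-type descent inequality
$\psi(y) - \psi(x) - \langle \nabla\psi(x), y-x\rangle \geq \tfrac{1}{2(L+\rho_\phi)}\|\nabla\psi(y) - \nabla\psi(x)\|^2$,
and the Baillon--Haddad cocoercivity of $\nabla\psi$,
$\langle \nabla\psi(y) - \nabla\psi(x), y-x\rangle \geq \tfrac{1}{L+\rho_\phi}\|\nabla\psi(y) - \nabla\psi(x)\|^2$.

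Substituting $\nabla\psi(w) = \nabla\phi(w) + \rho_\phi w$ and using the identity $\|y\|^2 - \|x\|^2 - 2\langle x, y-x\rangle = \|y-x\|^2$ translates the first inequality into
$\phi(y) - \phi(x) - \langle \nabla\phi(x), y-x\rangle \geq \tfrac{1}{2(L+\rho_\phi)}\|\nabla\phi(y) - \nabla\phi(x) + \rho_\phi(y-x)\|^2 - \tfrac{\rho_\phi}{2}\|y-x\|^2$, while the second, after expanding the squared norm and isolating the inner product (this is precisely where $L > \rho_\phi$ is needed, so that one may divide by $L-\rho_\phi$), yields
$\langle \nabla\phi(y) - \nabla\phi(x), y-x\rangle \geq \tfrac{1}{L-\rho_\phi}\|\nabla\phi(y) - \nabla\phi(x)\|^2 - \tfrac{\rho_\phi L}{L-\rho_\phi}\|y-x\|^2$.

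The final step is to expand the squared norm on the right of the translated Bregman inequality, which produces a cross term proportional to $\langle \nabla\phi(y)-\nabla\phi(x), y-x\rangle$, and then substitute the lower bound from the cocoercivity inequality to eliminate it. Collecting coefficients, the identity $\tfrac{1}{2(L+\rho_\phi)} + \tfrac{\rho_\phi}{(L+\rho_\phi)(L-\rho_\phi)} = \tfrac{1}{2(L-\rho_\phi)}$ produces the target coefficient on $\|\nabla\phi(y) - \nabla\phi(x)\|^2$, and a parallel simplification collapses the $\|y-x\|^2$ coefficient to $-\tfrac{\rho_\phi L}{2(L-\rho_\phi)}$. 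The main obstacle is precisely this algebraic bookkeeping: a naive application of the Bregman inequality alone gives only the weaker constant $\tfrac{1}{2(L+\rho_\phi)}$, so using cocoercivity to upgrade the leading coefficient---at the price of introducing the $\|y-x\|^2$ correction---is the essential step that delivers the sharp form stated in the lemma.
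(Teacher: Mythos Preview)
Your argument is correct in both parts; the algebra you outline for part~(b) indeed closes, with the cocoercivity bound on $\langle \nabla\phi(y)-\nabla\phi(x),\,y-x\rangle$ being the key step that upgrades the coefficient $\tfrac{1}{2(L+\rho_\phi)}$ to $\tfrac{1}{2(L-\rho_\phi)}$. The paper does not supply its own proof at all: it simply cites \cite[Proposition~A.24]{Bertsekas2016} for~(a) and \cite[Theorem~2.2]{ThemelisPatrinos2018} for~(b), so your self-contained derivation is a genuine addition rather than a departure from the paper's approach.
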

\begin{proof}
    Parts (a) and (b) follow from \cite[Proposition A.24]{Bertsekas2016}  and \cite[Theorem 2.2]{ThemelisPatrinos2018}, respectively. 
\end{proof}

\begin{remark}
    By \cref{lemma:properties_Lsmooth}(a), $\phi$ is $\rho_{\phi}$-weakly convex for some $\rho_{\phi}\in[0,L_{\phi}]$ if $\phi$ is $L_{\phi}$-smooth. If $\phi$ is convex and $L_{\phi}$-smooth, we obtain from \cref{lemma:properties_Lsmooth}(b) that 
    \begin{equation}
    \phi(y)-\phi(x)-\lla \nabla \phi(x) , y-x \rla  \geq \frac{1}{2\rhophi}\norm{\nabla \phi(y)-\nabla \phi(x)}^2, \quad \forall x,y\in \Re^n.
    \label{eq:lowerbound_intermsofgrad}
    \end{equation}
    \label{remark:Lsmooth_implies_weaklyconvex}
\end{remark}

\section{Proposed algorithm and its convergence}
\label{sec:algorithms}
In this section, we propose our algorithm and provide its
convergence analysis under various parameter settings.

\subsection{Proposed algorithm}
\label{subsec:alg}
We present our proposed algorithm for solving \eqref{eq:problem} in \cref{alg:main}.

\begin{algorithm}
    \ifdefined\submit
    \else
	\SetAlgoLined
	\fi
	\begin{description}
		\item[Step 0.] Choose an initial point $(y^0,z^0) \in
			\Re^n\times \Re^n$ and stepsize parameters $\tau > 0$, $\gamma\in (0,\infty)$, and $\alpha, \beta \in (0,\infty]$ such that $\frac{1}{\gamma} = \frac{1}{\alpha}+\frac{1}{\beta}$.
		\item[Step 1.] Set 
		\begin{align}
			x^{k}&  \in \prox_{\alpha f} (z^k), \label{eq:x-step} \\
			y^{k+1}&  \in \prox_{\gamma g} \left(
			\frac{\gamma}{\alpha} (2x^{k}-z^k-\alpha \nabla h(x^k))  +
			\frac{\gamma}{\beta} T_{\beta p} (y^k)  \right),  \label{eq:y-step} \\
			z^{k+1} & = z^k + \tau ( y^{k+1} - x^{k} ).\label{eq:z-step}
		\end{align}
		\item[Step 2.] If a termination criterion is not met, go to Step 1. 
	\end{description}
	\caption{A four-operator splitting algorithm for nonsmooth nonconvex optimization}
	\label{alg:main}
\end{algorithm}

For convenience in later discussions, we define $P_{\Lambda}:\Re^n\times \Re^n \rightrightarrows \Re^n$ as 
\memosolved{AT}{Does $P_{\Lambda} (y,z)$ change depending on which $x$ is
chosen?}\cpsolved{I think that is a set operator, though the notation is
	kind of ambiguous, but it should be the set of the union over all
	$x$ (and all elements of $T_{\beta p}(y)$.}
 \jhsolved{That's right, it should be the union over all $x$. Actually, this wouldn't matter later on as we would require $\alpha$ to be strictly less than $1/(\Lf+\Lh)$, and in turn, $\alpha<1/\Lf$. In this case, $\prox_{\alpha f}$ would be single-valued anyway... But this is for later when we talk about the proper stepsize to guarantee strict decrease. So I will make the necessary modifications for the first parts where we don't require yet the restriction on $\alpha$.}
	\begin{equation}
		P_{\Lambda} (y,z) \coloneqq \bigcup_{x\in \prox_{\alpha f}(z)}\prox_{\gamma g} \left( \frac{\gamma}{\alpha} (2x-z-\alpha \nabla h(x))+   \frac{\gamma}{\beta} T_{\beta p} (y)  \right), 
		\label{eq:Plambda}
	\end{equation}
	where $\Lambda \coloneqq (\alpha,\beta,\gamma)$, $\alpha,\beta\in (0,\infty]$ and $\gamma \in (0,\infty)$. Hence, the $y$-step in \eqref{eq:y-step} can be written as $y^{k+1} \in P_{\Lambda}(y^k,z^k)$. Observe that since $f$ is differentiable, $x
\in \prox_{\alpha f}(z)$ implies that $z-x  = \alpha \nabla f(x)$ by \eqref{eq:prox_optimality}, so we may also write $P_{\Lambda}$ as 
	\begin{equation}
P_{\Lambda} (y,z) = \bigcup_{x\in \prox_{\alpha f}(z)} \prox_{\gamma g} \left( \frac{\gamma}{\alpha} T_{\alpha (f+h)}(x)  +   \frac{\gamma}{\beta} T_{\beta p} (y)  \right).
\label{eq:Plambda2}
\end{equation}
%
When $\rhop=0$, we in particular set $\beta=\infty$, so that $\gamma = \alpha$ by Step 0 of \cref{alg:main}. On the other hand, when $\Lf=\Lh= 0$, we set $\alpha=\infty$ and therefore $\gamma=\beta$.

\begin{remark}
\label{rem:alg_specialcases}
The above algorithm covers several algorithms in the literature.
\begin{enumerate}
	\item (Davis-Yin splitting algorithm). When $p\equiv 0$, $\tau=1$,
		and $\gamma=\alpha$, the algorithm reduces to the Davis-Yin
		splitting (DYS) algorithm \citep{DavisYin17}, which itself
		covers the gradient descent algorithm (when $f\equiv g \equiv
		0$), the proximal gradient algorithm for the sum of a smooth
		and a nonsmooth function (when $f\equiv 0$), and the
		Douglas-Rachford algorithm
		\citep{DouglasRachford56a} (when $h\equiv 0$). 
	
	\item (Proximal subgradient method for sum of two nonsmooth functions). When $f\equiv h\equiv 0$ so that the stepsizes satisfy $\gamma = \beta$ and $\alpha= \infty$, the algorithm reduces to the proximal subgradient algorithm: 
 \ifdefined\submit
 $ y^{k+1} = \prox_{\gamma g }(y^k-\gamma \partial p(y^k),$
 \else 
 \[ y^{k+1} = \prox_{\gamma g }(y^k-\gamma \partial p(y^k)),\]
 \fi 
    but covers a wider range of problems compared to DYS with $f\equiv 0$ since the function $p$ could be nonsmooth. 
	
	\item (Generalized proximal point algorithm/Proximal DC algorithm). When $f\equiv 0$, the algorithm simplifies to 
    \begin{align*}
        y^{k+1} \in \prox_{\gamma g} \left( \frac{\gamma}{\alpha} (z^k-\alpha \nabla h(z^k))  +   \frac{\gamma}{\beta} T_{\beta p} (y^k)  \right), \,
        z^{k+1} = (1-\tau )z^k + \tau y^{k+1}.
    \end{align*}
    Thus, when $\tau=1$ and $\rhop=0$ so that  $\beta=\infty$ and $\gamma=\alpha$, we further obtain
	\begin{equation}
		y^{k+1} \in \prox_{\gamma g}(y^{k}-\gamma \nabla h(y^k)-\gamma \partial p (y^k)).
		\label{eq:proxdc}
	\end{equation}

When $g$ is convex, this method is known as the proximal DC (PDC) algorithm \citep{WenChenPong2018}, while for a general (not necessarily convex) $g$, it is referred to as the generalized proximal point (GPP) algorithm in \cite{An2016}.

\end{enumerate}	
\end{remark}

\subsection{Stepsize bounds}
We now derive appropriate stepsizes for \cref{alg:main} that will
guarantee sufficient descent of \alert{a certain merit function}. First, we
introduce some notations. Given $\xi \in \partial p(y)$, we define $ V_{\Lambda,\xi} :
\Re^n\times \Re^n \times  \Re^n \to \Re$ as
\begin{equation}
	 V_{\Lambda,\xi} (y,z,x) \coloneqq \min_{w\in \Re^n} ~\Phi_{\Lambda,\xi}(w;y,z,x),
	 \label{eq:envelope}
\end{equation}
where 
\begin{equation}
	\Phi_{\Lambda,\xi}(w;y,z,x) \coloneqq	Q_{\alpha (f+h)} ( w;x) +   Q_{\beta p} ( w;y,\xi) + g(w)  .
	\label{eq:Phi}
\end{equation}

We will use \eqref{eq:envelope} as a merit function for our proposed
algorithm. It is not difficult to verify that when $p\equiv 0$ and
$\alpha=\gamma$ as in the setting described in the first item of \cref{rem:alg_specialcases}, the function given by \eqref{eq:envelope} with $\alpha<\Lf^{-1}$ and $x=\prox_{\alpha f}(z)$ simplifies to the Davis-Yin envelope function introduced in \citep{LiuYin19}, which covers the forward-backward envelope \citep{ThemelisStellaPatrinos2018} and the Douglas-Rachford envelope \citep{PatrinosStellaBemporad2014}.

\alert{
\begin{lemma}
	\label{lemma:prox_formula}
	Suppose that $f$ and $h$ are continuously differentiable
	functions, 
$g$ is proper and closed,  
 and $\gamma>0$ satisfies $\frac{1}{\gamma} = \frac{1}{\alpha}  +
	\frac{1}{\beta}$
 with $\alpha,\beta >0$.\cpsolved{I think we will have trouble if
		$\alpha$ or $\beta$ is $0$, in which case the argmin becomes
		the whole space, right?}  Then $P_{\Lambda}(y,z) \neq \emptyset$ and
	\begin{equation}
	 P_{\Lambda}(y,z) =   \bigcup_{\substack{x\in \prox_{\alpha f}(z)
 \\ \xi\in \partial p(y) }}   \argmin_{w\in \Re^n}\, \Phi_{\Lambda,\xi} (w;y,z,x) 
	 \label{eq:prox_formula}	
	\end{equation}	
	for all $(y,z)\in \dom (g)\times \Re^n$,
		where $P_{\Lambda}$ is given by \eqref{eq:Plambda}.
\end{lemma}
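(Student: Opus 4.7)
The plan is almost entirely algebraic: reduce $\argmin_w \Phi_{\Lambda,\xi}(w;y,z,x)$ to a single proximal step of $g$, and show that its center matches the argument of $\prox_{\gamma g}$ in \eqref{eq:Plambda}. The one non-trivial ingredient is that for any $x\in\prox_{\alpha f}(z)$, since $f$ is continuously differentiable, \eqref{eq:prox_optimality} gives $\nabla f(x) = (z-x)/\alpha$. This identity is what connects the ``$2x-z$'' appearing in \eqref{eq:Plambda} to the ``$x - \alpha(\nabla f(x)+\nabla h(x))$'' that emerges from completing the square.

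Fix $x\in\prox_{\alpha f}(z)$ and $\xi\in\partial p(y)$. Expanding $\Phi_{\Lambda,\xi}(w;y,z,x)$ via the definitions of $Q_{\alpha(f+h)}(w;x)$ and $Q_{\beta p}(w;y,\xi)$, and isolating the $w$-dependent terms, one obtains
\[
\lla \nabla f(x)+\nabla h(x)+\xi,\, w\rla + \tfrac{1}{2\alpha}\|w-x\|^2 + \tfrac{1}{2\beta}\|w-y\|^2 + g(w) + \text{const}.
\]
Using the hypothesis $\tfrac{1}{\gamma}=\tfrac{1}{\alpha}+\tfrac{1}{\beta}$, the two quadratics combine into $\tfrac{1}{2\gamma}\bigl\|w-\gamma(\tfrac{x}{\alpha}+\tfrac{y}{\beta})\bigr\|^2$ plus a $w$-independent constant. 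Absorbing the linear term by completing the square yields
\[
\Phi_{\Lambda,\xi}(w;y,z,x) = \tfrac{1}{2\gamma}\|w - v\|^2 + g(w) + C,
\]
with $v = \gamma(\tfrac{x}{\alpha}+\tfrac{y}{\beta}) - \gamma(\nabla f(x)+\nabla h(x)+\xi)$ and $C$ independent of $w$. Substituting $\nabla f(x)=(z-x)/\alpha$ reduces $v$ to exactly $\tfrac{\gamma}{\alpha}(2x - z - \alpha\nabla h(x)) + \tfrac{\gamma}{\beta}(y-\beta\xi)$, so by the definition \eqref{eq:prox} of the proximal map, $\argmin_w \Phi_{\Lambda,\xi}(\cdot;y,z,x) = \prox_{\gamma g}(v)$.

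Finally, take the union of this identity over all $x\in\prox_{\alpha f}(z)$ and $\xi\in\partial p(y)$: the right-hand side of \eqref{eq:prox_formula} is then immediate, while on the left-hand side, because $T_{\beta p}(y) = y - \beta\partial p(y)$, the set $\{y-\beta\xi : \xi\in\partial p(y)\}$ equals $T_{\beta p}(y)$, so the same union reproduces $P_{\Lambda}(y,z)$ as given by \eqref{eq:Plambda}. Nonemptiness reduces to checking that $\prox_{\alpha f}(z)$, $\partial p(y)$ for $y\in\dom(g)$, and $\prox_{\gamma g}(v)$ are all nonempty: the second follows because $-p$ is weakly convex and finite on an open set containing $\dom(g)$, whence $p$ is locally Lipschitz there and has nonempty limiting subdifferential; the first and third follow from the smoothness of $f$ and properness/closedness of $g$ by standard coercivity/attainment arguments. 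The only real care needed is the bookkeeping of constants while completing the square; no analytic obstacle arises once the prox-gradient identity in the first step is in hand.
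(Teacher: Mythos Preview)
Your proposal is correct and follows essentially the same route as the paper's proof: expand $\Phi_{\Lambda,\xi}$, drop $w$-independent terms, use $\tfrac{1}{\gamma}=\tfrac{1}{\alpha}+\tfrac{1}{\beta}$ to merge the two quadratics, and complete the square to identify the argmin with a $\prox_{\gamma g}$ evaluation. The only cosmetic difference is that the paper lands on the form $\prox_{\gamma g}\bigl(\tfrac{\gamma}{\alpha}(x-\alpha\nabla f(x)-\alpha\nabla h(x))+\tfrac{\gamma}{\beta}(y-\beta\xi)\bigr)$ and invokes the alternative description \eqref{eq:Plambda2} of $P_\Lambda$, whereas you substitute $\nabla f(x)=(z-x)/\alpha$ at the end to match \eqref{eq:Plambda} directly; these are the same computation.
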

\begin{proof}
    Given $x \in \prox_{\alpha f} (z)$ and $\xi \in \partial p(y)$, we have that 
    \begin{align*}
        &~\argmin_{w\in\Re^n} ~\Phi_{\Lambda,\xi} (w;y,z,x)\\
		 =&~\argmin _{w\in \Re^n}~ f(x)+h(x) + \lla \nabla f(x) + \nabla h(x), w- x \rla + \frac{1}{2\alpha}\norm{w-x}^2 \\
        & \qquad \qquad \quad + p(y) + \lla \xi , w-y\rla + \frac{1}{2\beta}\norm{w-y}^2 + g(w)   \\
         = &~\argmin _{w\in \Re^n}~ \lla \nabla f(x) + \nabla h(x) , w\rla + \frac{1}{2\alpha}\norm{w}^2 - \frac{1}{\alpha} \lla w, x\rla \\
        & \qquad \qquad \quad + \lla \xi , w\rla + \frac{1}{2\beta}\norm{w}^2 - \frac{1}{\beta}\lla w,y\rla + g(w) \\
        = &~ \argmin _{w\in \Re^n}~ \frac{1}{2\gamma}\norm{w}^2 - \frac{1}{\alpha } \lla x-\alpha (\nabla f(x) + \nabla h(x)), w \rla - \frac{1}{\beta} \lla y-\beta\xi,w\rla + g(w) \\
    =&~  \argmin _{w\in \Re^n}~\frac{1}{2\gamma} \norm{ w -\left( \frac{\gamma}{\alpha}(x-\alpha \nabla f(x) - \alpha \nabla h(x)) + \frac{\gamma}{\beta}(y-\beta \xi)\right)}^2 + g(w) \\
    =&~  \prox_{\gamma g } \left( \frac{\gamma}{\alpha}(x-\alpha \nabla f(x) - \alpha \nabla h(x)) + \frac{\gamma}{\beta}(y-\beta \xi)\right)
    \end{align*}
    where the third equality follows from the definition of $\gamma$. The claim now follows from \eqref{eq:Plambda2}. 
\end{proof}
}

The following lemma provides an upper bound for the merit function 
$V_{\Lambda,\xi}$. 

\begin{lemma}
	\label{lemma:V_upperbound}
	Under the assumptions of \cref{lemma:prox_formula}, we have 
	\begin{equation*}
	V_{\Lambda,\xi}(y,z,x) \leq 	Q_{\alpha (f+h)}(y;x)  +   p(y)  + g(y) 
	\end{equation*}
for all $(y,z)\in \dom (g)\times \Re^n$, $x \in \prox_{\alpha f} (z)$,  $\xi\in \partial p(y)$, $\ds y^\plus \in \argmin_{w\in\Re^n} \Phi_{\Lambda,\xi} (w;y,z, x)$. 
\end{lemma}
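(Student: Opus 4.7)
The plan is to bound the minimum by evaluating the objective at a specific test point. Since $V_{\Lambda,\xi}(y,z,x) = \min_{w\in\Re^n} \Phi_{\Lambda,\xi}(w;y,z,x)$, for any $w \in \Re^n$ we have
\[
V_{\Lambda,\xi}(y,z,x) \leq \Phi_{\Lambda,\xi}(w;y,z,x) = Q_{\alpha(f+h)}(w;x) + Q_{\beta p}(w;y,\xi) + g(w).
\]
The natural choice is $w = y$, because this choice eliminates both the inner-product and the quadratic term in the model $Q_{\beta p}(\cdot;y,\xi)$ centered at $y$.

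More precisely, I would observe that by the definition of $Q_{\beta p}$,
\[
Q_{\beta p}(y;y,\xi) = p(y) + \langle \xi, y-y\rangle + \frac{1}{2\beta}\|y-y\|^2 = p(y),
\]
which holds regardless of the particular choice of $\xi \in \partial p(y)$ and without requiring $\beta$ to lie in any particular range. Substituting $w=y$ then gives
\[
\Phi_{\Lambda,\xi}(y;y,z,x) = Q_{\alpha(f+h)}(y;x) + p(y) + g(y),
\]
and the stated bound follows immediately.

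The only subtlety to address is that the lemma statement requires $y^{\plus} \in \argmin_{w} \Phi_{\Lambda,\xi}(w;y,z,x)$ to exist, so one should remark that \cref{lemma:prox_formula} guarantees $P_\Lambda(y,z) \neq \emptyset$, and hence by \eqref{eq:prox_formula} this argmin is nonempty for any admissible $x$ and $\xi$; this makes the infimum a minimum and ensures $V_{\Lambda,\xi}(y,z,x)$ is finite. Beyond this existence point, there is no real obstacle: the argument is just an application of ``min $\leq$ value at a specific point,'' with $w=y$ chosen precisely to collapse the quadratic model of $p$ to $p(y)$.
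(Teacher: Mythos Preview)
Your proof is correct and follows essentially the same approach as the paper: bound the minimum by evaluating $\Phi_{\Lambda,\xi}$ at $w=y$, which collapses $Q_{\beta p}(y;y,\xi)$ to $p(y)$. The paper's proof is a one-liner stating $V_{\Lambda,\xi}(y,z,x)\leq \Phi_{\Lambda,\xi}(y;y,z,x)$, while you additionally spell out the nonemptiness of the argmin via \cref{lemma:prox_formula}; this is a harmless clarification.
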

\begin{proof}
We immediately get the result by noting from \eqref{eq:envelope} that $V_{\Lambda,\xi}(y,z,x) \leq \Phi_{\Lambda,\xi}(y;y,z,x)$.
\end{proof}
We now compute a lower bound for $V_{\Lambda,\xi}$. We make use of the estimate provided in \cref{lemma:properties_Lsmooth}(b) in the following lemma, inspired by \cite{ThemelisPatrinos2018} when they studied the Douglas-Rachford algorithm. 
\begin{lemma}
	\label{lemma:V_lowerbound}
	Suppose that \cref{assume:problem} holds. 
 Let $\rhof \geq 0$ be such that
	$f$ is \alert{$\rhof$}-weakly convex
and $L\geq \Lf$ be such that $L-\rhof > 0$.\footnote{By
	\cref{assume:problem}(a) and
	\cref{remark:Lsmooth_implies_weaklyconvex}, we see that there
	indeed exists $\rhof\geq 0$ such that
	$f+\frac{\rhof}{2}\norm{\cdot}^2$ is convex.}
	Then
	\begin{align}
        \begin{split}
         & V_{\Lambda,\xi}(y,z,x)  \\
        \geq&  Q_{\alpha (f+h)}(y^\plus;\hat{x})  + p(y^\plus) + g(y^\plus) + \lla \nabla f(\hat{x})-\nabla f(x) - \frac{1}{\alpha}(\hat{x}-x),x-y^\plus \rla \\
		& \quad + \left\langle \nabla h(x)-\nabla h(\hat{x}),y^\plus -\hat{x}\right\rangle + \frac{1}{2(L-\rhof)}\norm{\nabla f(x)-\nabla f(\hat{x})}^2 \\
		& \quad - \left( \frac{\Lh}{2} + \frac{1}{2\alpha}+ \frac{\rhof L}{2(L-\rhof)} \right) \norm{\hat{x}-x}^2  + \frac{1-\beta \rhop}{2\beta} \norm{y^\plus-y}^2
        \end{split}
	\end{align}
	for all $(y,z)\in \dom (g)\times \Re^n$, $x \in \prox_{\alpha f} (z)$, $\hat{x}\in \Re^n$, $\xi\in \partial p(y)$, and  $y^\plus \in \argmin_{w\in\Re^n} \Phi_{\Lambda,\xi} (w;y,z,x)$.
\end{lemma}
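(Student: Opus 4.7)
The plan is to start from the identity $V_{\Lambda,\xi}(y,z,x) = \Phi_{\Lambda,\xi}(y^\plus; y, z, x) = Q_{\alpha(f+h)}(y^\plus; x) + Q_{\beta p}(y^\plus; y, \xi) + g(y^\plus)$, which holds since $y^\plus$ attains the minimum in \eqref{eq:envelope}. The strategy is to lower-bound the first two quadratic models separately and then add them, leaving $g(y^\plus)$ untouched.

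For the $p$-piece, the weak convexity of $-p$ together with $\xi\in\partial p(y)$ yields $p(y^\plus) \leq p(y) + \langle \xi, y^\plus-y\rangle + \frac{\rhop}{2}\norm{y^\plus-y}^2$. Substituting this into the definition of $Q_{\beta p}(y^\plus;y,\xi)$ immediately produces $Q_{\beta p}(y^\plus; y, \xi) \geq p(y^\plus) + \frac{1-\beta\rhop}{2\beta}\norm{y^\plus-y}^2$, which accounts for the last term of the claim.

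The main work lies in bounding $Q_{\alpha(f+h)}(y^\plus; x)$ from below in terms of $Q_{\alpha(f+h)}(y^\plus; \hat{x})$. I would expand the difference of the two $Q$-models and split it into an $f$-contribution, an $h$-contribution, and a $\tfrac{1}{2\alpha}$-quadratic contribution. For each smooth function the linear cross difference $\langle \nabla\phi(x), y^\plus-x\rangle - \langle \nabla\phi(\hat{x}), y^\plus-\hat{x}\rangle$ is rewritten by inserting an intermediate inner product, but the intermediate is chosen asymmetrically: $\langle \nabla f(\hat{x}), y^\plus-x\rangle$ for $f$, producing $\langle \nabla f(x)-\nabla f(\hat{x}), y^\plus-x\rangle - \langle \nabla f(\hat{x}), x-\hat{x}\rangle$; and $\langle \nabla h(x), y^\plus-\hat{x}\rangle$ for $h$, producing $\langle \nabla h(x)-\nabla h(\hat{x}), y^\plus-\hat{x}\rangle - \langle \nabla h(x), x-\hat{x}\rangle$. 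Applying \cref{lemma:properties_Lsmooth}(b) to the weakly convex $L$-smooth $f$ bounds $f(x)-f(\hat{x})-\langle \nabla f(\hat{x}), x-\hat{x}\rangle$ below by $\frac{1}{2(L-\rhof)}\norm{\nabla f(x)-\nabla f(\hat{x})}^2 - \frac{\rhof L}{2(L-\rhof)}\norm{x-\hat{x}}^2$, while \cref{lemma:properties_Lsmooth}(a) applied to $h$ with linearization at $x$ gives $h(x)-h(\hat{x})-\langle \nabla h(x), x-\hat{x}\rangle \geq -\frac{\Lh}{2}\norm{x-\hat{x}}^2$. The remaining quadratic $\frac{1}{2\alpha}(\norm{y^\plus-x}^2-\norm{y^\plus-\hat{x}}^2)$ is handled by polarization, yielding $\langle -\frac{1}{\alpha}(\hat{x}-x), x-y^\plus\rangle - \frac{1}{2\alpha}\norm{\hat{x}-x}^2$.

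Summing the three contributions reconstructs the claimed bound: the cross terms in $x-y^\plus$ coming from $f$ and from the polarization combine into $\langle \nabla f(\hat{x})-\nabla f(x) - \frac{1}{\alpha}(\hat{x}-x), x-y^\plus\rangle$; the $h$-piece supplies $\langle \nabla h(x)-\nabla h(\hat{x}), y^\plus-\hat{x}\rangle$; and the three quadratic coefficients aggregate into $\frac{\Lh}{2}+\frac{1}{2\alpha}+\frac{\rhof L}{2(L-\rhof)}$. The only delicate point is the asymmetric choice of intermediate inner products: since $h$ is assumed merely smooth (not weakly convex), only part~(a) of \cref{lemma:properties_Lsmooth} is available, forcing the linearization for $h$ to be based at $x$ rather than $\hat{x}$—and this is exactly what matches the target inner product $\langle \nabla h(x)-\nabla h(\hat{x}), y^\plus-\hat{x}\rangle$ while leaving only a clean $-\frac{\Lh}{2}\norm{\hat{x}-x}^2$ penalty.
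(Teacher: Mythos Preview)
Your proposal is correct and follows essentially the same approach as the paper: both use \cref{lemma:properties_Lsmooth}(b) on $f$, \cref{lemma:properties_Lsmooth}(a) on $h$, the weak convexity of $-p$ for the $Q_{\beta p}$ term, and the same polarization identity for the $\tfrac{1}{2\alpha}$-quadratic. The only cosmetic difference is that the paper bounds $Q_{\alpha f}(y^\plus;x)$ and the $h$-terms separately and then adds them, whereas you frame the computation as lower-bounding the difference $Q_{\alpha(f+h)}(y^\plus;x)-Q_{\alpha(f+h)}(y^\plus;\hat{x})$ directly; the algebra and the key inequalities are identical.
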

\begin{proof}
	 We have from \cref{lemma:properties_Lsmooth}(b) that for all $\hat{x}\in \Re^n$,
	\begin{align}
		Q_{\alpha f}(y^\plus;x)& \geq \left(f(\hat{x}) + \lla \nabla f(\hat{x}),x-\hat{x}\rla + \frac{1}{2(L-\rhof)}\norm{\nabla f(x)-\nabla f(\hat{x})}^2 \right. \notag \\
  & \qquad  \left.-\frac{\rhof L}{2(L-\rhof)}\norm{x-\hat{x}}^2\right) + \lla \nabla f(x),y^\plus-x \rla + \frac{1}{2\alpha} \norm{y^\plus-x}^2 \notag  \\
		& = f(\hat{x}) + \lla \nabla f(\hat{x})-\nabla f(x),x-y^\plus \rla + \lla \nabla f(\hat{x}),y^\plus -\hat{x}\rla   \notag \\
		& \qquad + \frac{1}{2(L-\rhof)}\norm{\nabla f(x)-\nabla f(\hat{x})}^2 -\frac{\rhof L}{2(L-\rhof)}\norm{x-\hat{x}}^2 + \frac{1}{2\alpha} \norm{y^\plus-x}^2 ,\notag 
	\end{align}
	for any $L\geq \Lf$ such that $L>\rhof$. Using the identity $\norm{a-b}^2 -\norm{a-c}^2 = - \norm{b-c}^2 + 2\lla b-c,b-a\rla$ and after some routine calculations, we further obtain
	\begin{align}
	Q_{\alpha f}(y^\plus ; x) &  \geq Q_{\alpha f}(y^+;\hat{x}) + \left\langle \nabla f(\hat{x})-\nabla f(x) - \frac{1}{\alpha}(\hat{x}-x),x-y^\plus \right\rangle 
	    \notag \\
	    & \qquad - \left(\frac{1}{2\alpha}+ \frac{\rhof L}{2(L-\rhof)}\right) \norm{x-\hat{x}}^2  + \frac{1}{2(L-\rhof)}\norm{\nabla f(x)-\nabla f(\hat{x})}^2 . \label{eq:Qf_lowerbound}
	\end{align}
	On the other hand, we have from \cref{lemma:properties_Lsmooth}(a) \memosolved{AT}{\cref{lemma:properties_Lsmooth}(c)?? (a)??}\jhsolved{Fixed this.} that 
	\begin{align}
	& h(x) + \lla \nabla h(x),y^\plus-x\rla \notag  \\
        \geq& \left( h(\hat{x}) - \lla\nabla h(x),\hat{x}-x \rla - \frac{\Lh}{2}\norm{\hat{x}-x}^2 \right) + \lla \nabla h(x),y^\plus - x\rla  \notag \\
	=& h(\hat{x})+ \lla \nabla h(\hat{x}),y^\plus - \hat{x}\rla   + \lla \nabla h(x)-\nabla h(\hat{x}),y^\plus - \hat{x}\rla - \frac{\Lh}{2}\norm{\hat{x}-x}^2  \label{eq:Qh_lowerbound}
	\end{align}
	for all $\hat{x}\in \Re^n$. 
	By \cref{assume:problem}(d) and the fact that 
	\begin{equation}
	\partial \left( \frac{\rhop}{2}\|\cdot\|^2 - p\right) (y) = \rhop y - \partial p (y),
	\label{eq:partial_p}	
	\end{equation}
	which holds by \cite[Exercise 8.8(c)]{RW98}, we have 
	\[ \frac{\rhop}{2}\norm{y^\plus}^2- p(y^\plus) \geq \frac{\rhop}{2}\norm{y}^2- p(y) + \lla \rhop y - \xi , y^\plus - y \rla ,\]
	for any $\xi\in \partial p(y)$.
	This implies that 
        \begin{equation}
            p(y) \geq p(y^\plus) - \lla \xi, y^\plus-y \rla - \frac{\rhop}{2}\norm{y^\plus -y}^2 
            \label{eq:p_descentlemma}
        \end{equation}
    and hence 
		\begin{equation}
		  Q_{\beta p}(y^\plus;y,\xi) \geq  	  \left( p(y^\plus) + \frac{1-\beta \rhop}{2\beta} \norm{y^\plus-y}^2\right) .
		\label{eq:Qpi_lowerbound}
	\end{equation}
	Using the fact that $V_{\Lambda,\xi}(y,z,x) = \Phi_{\Lambda,\xi} (y^+;y,z,x)$  together with the bounds \eqref{eq:Qf_lowerbound}, \eqref{eq:Qh_lowerbound} and \eqref{eq:Qpi_lowerbound}, we obtain the desired conclusion. 
\end{proof}

We now show that $\{  V_{\Lambda,\xi^k}(y^k,z^k,x^k) \}$ is a nonincreasing sequence for appropriately chosen stepsizes. To simplify the notations, we denote 
\[V_k \coloneqq V_{\Lambda,\xi^k}(y^k,z^k,x^k) = \Phi_{\Lambda,\xi^k}(y^{k+1};y^k,z^k,x^k).\]
In what follows, we discuss the cases $\tau\in (0,1]$, $\tau\in (1,2)$ and $\tau\in [2,\infty)$ separately. For each case, we will show that when $\Lf+\Lh>0$, there exist a function $c(\alpha)$ and a finite interval $I\subseteq (0,\infty)$ such that (i) $c(\alpha)\leq 0$ on $I$, (ii) $c(\alpha)< 0$ on the interior of $I$, and (iii) the inequality 
    \begin{align}
		V_{k-1} - V_k &  \geq  -\frac{c(\alpha)}{2\tau\alpha} \norm{x^k-x^{k-1}}^2  +  \frac{1-\beta \rhop}{2\beta} \norm{y^k-y^{k-1}}^2  
		\label{eq:suff_decrease}
	\end{align}
 holds for all $k$. For the case $\Lf+\Lh=0$, the above inequality
 also holds but with $\alpha=\infty$ and $c(\alpha)$ a
 negative constant, so that the first term on the right-hand side
 vanishes; see \cref{remark:Lf+Lh=0}.
 
\begin{theorem}[Stepsize for $\tau \in \left(0, 1\right\rbrack$]
\label{thm:Vk_nonincreasing_0to1}
	Suppose
\cref{assume:problem} holds and $\Lf+\Lh>0$. 
If $\{ (x^k,y^k,z^k)\}$ is generated by \cref{alg:main} with  $\tau\in
(0,1]$ and $\alpha \in (0,\bar{\alpha}]$,
	\begin{equation}
	\bar{\alpha}	\coloneqq \begin{cases}
		\frac{1}{\Lf+\Lh} & \text{if}~ (2-\tau)\Lf-2\rhof\geq \tau\Lh,\\
		\frac{\tau}{2\eta^*} & \text{otherwise},
	\end{cases}	
        \label{eq:alpha_bar_tau<1}
	\end{equation}
 and $\eta^*$ is the positive root of 
	\begin{equation}
		q(\eta)\coloneqq	2(2-\tau)\eta^2 - \tau((2-\tau)\Lh + \rhof \tau)\eta - \tau(\rhof^2 + \Lf\Lh),
		\label{eq:q_t<1}
	\end{equation}
	then  \eqref{eq:suff_decrease} holds with
	\begin{align}
	\begin{split}
	    & c(\alpha) \\
         \coloneqq &\begin{cases}
			2\Lf (\Lf+\Lh)\alpha^2 +\left( (2-\tau) \Lh - \tau \Lf \right) \alpha - (2-\tau) & \text{if}~(2-\tau)\Lf-2\rhof\geq \tau\Lh,\\
		2\Lf \Lh \alpha^2 + \left( (2-\tau)\Lh + \frac{\rhof \tau (\eta^*+\rhof) }{\eta^*}\right)\alpha- (2-\tau)& \text{otherwise}.
		\end{cases}
	\end{split}
		\label{eq:c(alpha)}
	\end{align}
	In particular, $\{V_k\}$ is nonincreasing if $\alpha \leq \bar{\alpha}$ and $\beta\leq \rhop^{-1}$, 
 and strictly decreasing if at least one holds with strict inequality. 
\end{theorem}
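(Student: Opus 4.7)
The plan is to sandwich $V_k$ using the upper and lower bounds from \cref{lemma:V_upperbound,lemma:V_lowerbound}. Concretely, I would apply \cref{lemma:V_upperbound} to obtain $V_k\leq Q_{\alpha (f+h)}(y^k;x^k)+p(y^k)+g(y^k)$, and apply \cref{lemma:V_lowerbound} to $V_{k-1}$ with $y^\plus$ identified with $y^k$ and the free point $\hat{x}$ chosen as $x^k$, so that the dominant block $Q_{\alpha (f+h)}(y^k;x^k)+p(y^k)+g(y^k)$ also appears on the right-hand side of that lower bound and cancels upon subtraction. What remains is
\[
V_{k-1}-V_k \geq R_f+R_h+\tfrac{1}{2(L-\rhof)}\|\nabla f(x^k)-\nabla f(x^{k-1})\|^2 -C(\alpha,L)\|x^k-x^{k-1}\|^2+\tfrac{1-\beta\rhop}{2\beta}\|y^k-y^{k-1}\|^2,
\]
where $R_f=\langle \nabla f(x^k)-\nabla f(x^{k-1})-\tfrac{1}{\alpha}(x^k-x^{k-1}),\,x^{k-1}-y^k\rangle$, $R_h=\langle \nabla h(x^{k-1})-\nabla h(x^k),\,y^k-x^k\rangle$, and $C(\alpha,L)=\tfrac{\Lh}{2}+\tfrac{1}{2\alpha}+\tfrac{\rhof L}{2(L-\rhof)}$.

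The key algebraic step is to rewrite $x^{k-1}-y^k$ and $y^k-x^k$ as linear combinations of $x^k-x^{k-1}$ and $\nabla f(x^k)-\nabla f(x^{k-1})$ using the algorithm's structure. From \eqref{eq:x-step} together with \eqref{eq:prox_optimality} we have $z^k-x^k=\alpha\nabla f(x^k)$ (and analogously for $k-1$), while \eqref{eq:z-step} gives $z^k-z^{k-1}=\tau(y^k-x^{k-1})$. Combining these yields $x^{k-1}-y^k=-\tfrac{1}{\tau}[(x^k-x^{k-1})+\alpha(\nabla f(x^k)-\nabla f(x^{k-1}))]$ and a similar formula for $y^k-x^k$. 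Substituting into $R_f$ produces $\tfrac{1}{\tau\alpha}\|x^k-x^{k-1}\|^2-\tfrac{\alpha}{\tau}\|\nabla f(x^k)-\nabla f(x^{k-1})\|^2$, while substituting into $R_h$ decomposes it into $\tfrac{1-\tau}{\tau}\langle \nabla h(x^k)-\nabla h(x^{k-1}),\,x^{k-1}-x^k\rangle$ plus a mixed gradient term (the $(1-\tau)/\tau$ factor is nonnegative precisely because $\tau\leq 1$). Using Young's inequality on the mixed term and the Lipschitz constants $\Lf,\Lh$ to bound every residual gradient difference by a multiple of $\|x^k-x^{k-1}\|^2$ collapses the right-hand side into the desired form $-c(\alpha,L)/(2\tau\alpha)\,\|x^k-x^{k-1}\|^2+\tfrac{1-\beta\rhop}{2\beta}\|y^k-y^{k-1}\|^2$ with a quadratic-in-$\alpha$ coefficient.

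The final step is to optimize over the free parameter $L\geq \Lf$ (with $L>\rhof$). In Case 1, the hypothesis $(2-\tau)\Lf-2\rhof\geq \tau\Lh$ makes it possible to take $L$ large enough that the $\|\nabla f\|^2$ buffer absorbs the $-\tfrac{\alpha}{\tau}\|\nabla f\|^2$ contribution, reducing the coefficient to the first quadratic in \eqref{eq:c(alpha)}; a direct substitution shows $c(1/(\Lf+\Lh))=0$, yielding $\bar{\alpha}=1/(\Lf+\Lh)$. In Case 2, parametrizing $L=\eta+\rhof$ and minimizing over $\eta>0$ leads to the quadratic $q(\eta)$ from \eqref{eq:q_t<1}, whose positive root is $\eta^*$; the corresponding $c(\alpha)$ is the second quadratic in \eqref{eq:c(alpha)}, and solving $c(\alpha)=0$ produces $\bar{\alpha}=\tau/(2\eta^*)$. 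The main obstacle will be the careful bookkeeping in this optimization, particularly verifying that in Case 2 the quadratic emerging from the $L$-optimization matches $q(\eta)$ exactly and that the extremal configuration indeed achieves the coefficient displayed in \eqref{eq:c(alpha)}; once \eqref{eq:suff_decrease} is established, the nonincreasing and strict-decrease claims follow at once from the sign of $c(\alpha)$ on $(0,\bar{\alpha}]$ and the nonnegativity of $(1-\beta\rhop)/(2\beta)$ under $\beta\leq \rhop^{-1}$.
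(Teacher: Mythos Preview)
Your overall strategy matches the paper: sandwich $V_k$ via \cref{lemma:V_upperbound,lemma:V_lowerbound} with $\hat x=x^k$, rewrite the cross terms using $z^k-z^{k-1}=(x^k-x^{k-1})+\alpha(\nabla f(x^k)-\nabla f(x^{k-1}))$ together with \eqref{eq:z-step}, and then split into two cases. Your Case~2 is essentially correct: there one ties $\alpha$ to $L$ via $\hat\alpha(L)=\tau/(2(L-\rhof))$ so that the $\|\nabla f\|^2$ coefficient is nonnegative and can be dropped, and one then seeks the smallest admissible $L$ (equivalently the largest $\hat\alpha$), which amounts to solving $q(\eta)=0$ with $\eta=L-\rhof$.

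Your Case~1, however, has a genuine gap. You propose to ``take $L$ large enough that the $\|\nabla f\|^2$ buffer absorbs the $-\tfrac{\alpha}{\tau}\|\nabla f\|^2$ contribution.'' This is backwards on two counts. First, the buffer $\tfrac{1}{2(L-\rhof)}$ shrinks as $L$ grows, so large $L$ impedes absorption. Second, and more importantly, the mechanism in Case~1 is not absorption at all: one takes $L=\Lf$, accepts that for $\hat\alpha\geq \tfrac{\tau}{2(\Lf-\rhof)}$ the coefficient $-\tfrac{\hat\alpha}{\tau}+\tfrac{1}{2(\Lf-\rhof)}$ in front of $\|\nabla f(x^k)-\nabla f(x^{k-1})\|^2$ is \emph{nonpositive}, and then invokes $\Lf$-Lipschitzness via $\|\nabla f(x^k)-\nabla f(x^{k-1})\|^2\leq \Lf^2\|x^k-x^{k-1}\|^2$ to convert that term into an $\|x^k-x^{k-1}\|^2$ contribution. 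This conversion is precisely what generates the leading coefficient $2\Lf(\Lf+\Lh)\alpha^2$ in the first branch of \eqref{eq:c(alpha)}: the extra $2\Lf^2\alpha^2$ comes from the Lipschitz conversion, added to the $2\Lf\Lh\alpha^2$ already present from the mixed-gradient bound. If you merely absorb and discard the $\|\nabla f\|^2$ term, the leading coefficient stays at $2\Lf\Lh\alpha^2$ and you do not recover the stated $c(\alpha)$; moreover, absorption is only possible for $\alpha\leq \tfrac{\tau}{2(\Lf-\rhof)}$, which under the Case~1 hypothesis is strictly below $\bar\alpha=\tfrac{1}{\Lf+\Lh}$.

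A smaller point: for the mixed term in $R_h$ the paper uses Cauchy--Schwarz followed by the two Lipschitz bounds, yielding the product $\Lf\Lh$. A symmetric Young's inequality would give $(\Lf^2+\Lh^2)/2$ instead, which is strictly worse when $\Lf\neq \Lh$ and would not reproduce the stated $c(\alpha)$. Use Cauchy--Schwarz (or Young with a tuned parameter) here.
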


\begin{proof}
	From \cref{lemma:prox_formula}, we know that for each $k$, there exists $\xi^k\in \partial p (y^k)$ such that $y^{k+1} \in \argmin_{w\in \Re^n} \Phi_{\Lambda,\xi^k}(w;y^k,z^k,x^k)$. 
	By \cref{lemma:V_upperbound},
	\begin{equation}
		V_k \leq 	Q_{\alpha (f+h)}(y^k;x^k) +   p(y^k) 
  + g(y^k) .
		\label{eq:V_upper_k}
	\end{equation}
	On the other hand, setting $(y,z,x)=(y^{k-1},z^{k-1},x^{k-1})$ and $\hat{x}=x^k$ in \cref{lemma:V_lowerbound}, we have
		\begin{align}
	   \begin{split}
	     V_{k-1} & \geq  Q_{\alpha (f+h)}(y^k;x^k)   + p(y^k) + g(y^k) \\
        & \quad + \left\langle \nabla f(x^k)-\nabla f(x^{k-1}) - \frac{1}{\alpha}(x^k-x^{k-1}),x^{k-1}-y^k \right\rangle \\
		& \quad + \left\langle \nabla h(x^{k-1})-\nabla h(x^k),y^k -x^k\right\rangle + \frac{1}{2(L-\rhof)}\norm{\nabla f(x^{k-1})-\nabla f(x^k)}^2 \\
		& \quad - \left( \frac{\Lh}{2} + \frac{1}{2\alpha}+ \frac{\rhof L}{2(L-\rhof)} \right) \norm{x^k-x^{k-1}}^2  + \frac{1-\beta \rhop}{2\beta} \norm{y^k-y^{k-1}}^2.
	   \end{split}	
    \label{eq:V_lower_k}
	\end{align}
    Subtracting \eqref{eq:V_upper_k} from \eqref{eq:V_lower_k}, we get
	\begin{align}
			V_{k-1} - V_k & \geq \left\langle \nabla f(x^k)-\nabla f(x^{k-1}) - \frac{1}{\alpha}(x^k-x^{k-1}),x^{k-1}-y^k \right\rangle \notag \\
			& \quad + \left\langle \nabla h(x^{k-1})-\nabla h(x^k),y^k -x^k\right\rangle  + \frac{1}{2(L-\rhof)}\norm{\nabla f(x^{k-1})-\nabla f(x^k)}^2 \notag \\
			& \quad 
			- \left(\frac{\Lh}{2} + \frac{1}{2\alpha}+ \frac{\rhof L}{2(L-\rhof)} \right) \norm{x^k-x^{k-1}}^2  + \frac{1-\beta \rhop}{2\beta} \norm{y^k-y^{k-1}}^2 \label{eq:diff_V}
	\end{align}
Meanwhile, we have from \eqref{eq:z-step} that $y^k = \frac{1}{\tau}(z^k-z^{k-1}) + x^{k-1}$ and from \eqref{eq:x-step} and \eqref{eq:prox_optimality} that 
\begin{equation}
    z^k - z^{k-1} = (x^k+\alpha \nabla f(x^k)) - (x^{k-1}+\alpha \nabla f(x^{k-1})).
    \label{eq:z^k_change}
\end{equation} Thus,
	\begin{align}
	y^k-x^{k-1} & = \frac{1}{\tau}(x^k-x^{k-1}) + \frac{\alpha}{\tau}  (\nabla f(x^k) - \nabla f(x^{k-1})), \label{eq:diff1} 
 \end{align}
 which can also be 
 rewritten as
 	\begin{align}
	y^k-x^k & =  \left( \frac{1}{\tau} - 1\right) (x^k-x^{k-1}) + \frac{\alpha}{\tau}  (\nabla f(x^k) - \nabla f(x^{k-1})). 		\label{eq:diff2}
	\end{align}
With these, algebraic calculations lead to 
	\begin{align}
 \nonumber
	&~\left\langle \nabla f(x^k)-\nabla f(x^{k-1}) - \frac{1}{\alpha}(x^k-x^{k-1}),x^{k-1}-y^k \right\rangle \\ \overset{\eqref{eq:diff1}}{=} &~-\frac{\alpha}{\tau}\norm{\nabla f(x^k)-\nabla f(x^{k-1})}^2	 + \frac{1}{\tau\alpha} \norm{x^k-x^{k-1}}^2 \label{eq:inner_prod_f}
	\end{align}
and 
	\begin{align}
		\left\langle \nabla h(x^{k-1})-\nabla h(x^k) ,y^k -x^k\right\rangle  & \overset{\eqref{eq:diff2}}{=}\left( 1-\frac{1}{\tau}\right) \left\langle \nabla h(x^k)-\nabla h(x^{k-1}) ,x^k-x^{k-1} \right\rangle \notag \\
     & \quad  -\frac{\alpha}{\tau}\left\langle \nabla h(x^k)-\nabla h(x^{k-1}) , \nabla f(x^k) - \nabla f(x^{k-1})\right\rangle. \label{eq:inner_prod_h} \\
     & \geq \frac{\tau -1}{\tau}\norm{\nabla h(x^k)-\nabla h(x^{k-1})}\cdot \norm{x^k-x^{k-1}}\notag \\
     & \quad -\frac{\alpha}{\tau} \norm{\nabla h(x^k)-\nabla h(x^{k-1})}\cdot \norm{\nabla f(x^k)-\nabla f(x^{k-1})}\notag \\
     & \geq 	\frac{\tau-1}{\tau}\Lh \norm{x^k-x^{k-1}}^2 - \frac{\alpha}{\tau}\Lf\Lh \norm{x^k-x^{k-1}}^2,\label{eq:inner_prod_h2}
	\end{align}
where the first inequality is by the Cauchy-Schwarz inequality and
noting that $\tau\in (0,1]$, while the last inequality holds by the Lipschitz continuity of the gradients of $f$ and $h$.
Combining \eqref{eq:diff_V}, \eqref{eq:inner_prod_f} and \eqref{eq:inner_prod_h2}, we obtain 
\begin{align}
	V_{k-1} - V_k &  \geq \left( -\frac{\alpha}{\tau}+ \frac{1}{2(L-\rhof)}\right) \norm{\nabla f(x^{k-1})-\nabla f(x^k)}^2 \notag \\
	& \quad 
	+ \left[ \left(\frac{1}{\tau}-\frac{1}{2}\right) \frac{1}{\alpha}-\frac{\Lh}{2} - \frac{\rhof L}{2(L-\rhof)} +\frac{\tau-1}{\tau}\Lh - \frac{\alpha}{\tau}\Lh \Lf\right]\norm{x^k-x^{k-1}}^2  \notag \\ 
	& \quad +  \frac{1-\beta \rhop}{2\beta} \norm{y^k-y^{k-1}}^2.
 \label{eq:diff_V2}
\end{align}
Now, we discuss two disjoint cases.
\begin{description}
	\item[Case 1.] Suppose that $(2-\tau)\Lf-2\rhof\geq \tau\Lh$. Then
		$\Lf-\rhof\geq \frac{\tau}{2}( \Lf +  \Lh)>0$ and we may take
		$L=\Lf$ in \eqref{eq:diff_V2}. Let $\hat{\alpha}>0$ be such that $\hat{\alpha}\geq\frac{\tau}{2(\Lf-\rhof)}$, and suppose that $0<\alpha \leq \hat{\alpha}$. Then 
	  \ifdefined\submit 
       $	  	-\frac{\alpha}{\tau}+ \frac{1}{2(\Lf-\rhof)}\geq -\frac{\hat{\alpha}}{\tau}+ \frac{1}{2(\Lf-\rhof)}  ,$
       \else 
        \begin{equation*}
	  	-\frac{\alpha}{\tau}+ \frac{1}{2(\Lf-\rhof)}\geq -\frac{\hat{\alpha}}{\tau}+ \frac{1}{2(\Lf-\rhof)}  ,
	  	\label{eq:lowerbound_coeff_nablaf}
	  \end{equation*}
        \fi 
	where the quantity on the right-hand side is at most zero. Together with the Lipschitz continuity of $\nabla f$ and \eqref{eq:diff_V2} with $L=\Lf$,  for any $\alpha \in (0, \hat{\alpha}]$ we have 
	\begin{align}
		V_{k-1} - V_k
        & \geq \left(\frac{2-\tau}{2\tau\hat{\alpha} } - \hat{\alpha} \frac{\Lf(\Lf+\Lh)}{\tau}  + \frac{\Lf}{2}-\frac{\Lh}{2} + \frac{\tau-1}{\tau}\Lh \right)\norm{x^k-x^{k-1}}^2 \notag \\
		& \quad +  \frac{1-\beta \rhop}{2\beta} \norm{y^k-y^{k-1}}^2
        \notag \\
		& = -\frac{c(\hat{\alpha})}{2\tau\hat{\alpha}} \norm{x^k-x^{k-1}}^2  +  \frac{1-\beta \rhop}{2\beta} \norm{y^k-y^{k-1}}^2  \label{eq:diff_V3} 
	\end{align}
	where 
	\begin{align}
\nonumber
		    c(\hat{\alpha}) & \coloneqq 2\Lf (\Lf+\Lh)\hat{\alpha}^2 +\left( (2-\tau) \Lh - \tau \Lf \right) \hat{\alpha} - (2-\tau)\notag \\
		& = 
		\left( \hat{\alpha}-\frac{1}{\Lf+\Lh}\right) \left(2\Lf (\Lf+\Lh)\hat{\alpha}+(2-\tau)(\Lf+\Lh) \right) 
        \label{eq:c_alpha_factors},
	\end{align}
	which is nonpositive when $\hat{\alpha}\leq \frac{1}{\Lf+\Lh}$. Hence, \eqref{eq:diff_V3} holds with nonpositive $c(\hat{\alpha})$ when $\hat{\alpha}\in \left[ \frac{\tau}{2(\Lf-\rhof)}, \frac{1}{\Lf+\Lh}\right]$, which is a nonempty interval due to our hypothesis that  $(2-\tau)\Lf-2\rhof\geq \tau\Lh$. 
 It is clear that $c(\alpha)<0$ when $\alpha < \bar{\alpha}$.

	\item[Case 2.] Suppose now that  
	\begin{equation}
	(2-\tau)\Lf-2\rhof< \tau\Lh.	
	\label{eq:assume_case2}
	\end{equation}
	Given $L\geq \Lf$ with $L-\rhof>0$, we define $\hat{\alpha}(L)
	\coloneqq \frac{\tau}{2(L-\rhof)}$ and select $\alpha \in
(0,\hat{\alpha}(L)]$.
Then
    \ifdefined\submit
    $-\frac{\alpha}{\tau}+ \frac{1}{2(L-\rhof)}\geq -\frac{\hat{\alpha}(L)}{\tau}+ \frac{1}{2(L-\rhof)}  =0.$
    \else 
    \begin{equation*}
		-\frac{\alpha}{\tau}+ \frac{1}{2(L-\rhof)}\geq -\frac{\hat{\alpha}(L)}{\tau}+ \frac{1}{2(L-\rhof)}  =0.
	\end{equation*}
    \fi 
	Hence, we have from \eqref{eq:diff_V2} that
	\begin{align}
		V_{k-1} - V_k 
		& \geq  -\frac{c(\hat{\alpha}(L))}{2\tau\hat{\alpha}(L)}\norm{x^k-x^{k-1}}^2  + \frac{1-\beta \rhop}{2\beta} \norm{y^k-y^{k-1}}^2 \label{eq:diff_V_case2} 
	\end{align}
	where 
	\begin{equation}
		c(\hat{\alpha}(L)) \coloneqq 2\Lf \Lh\hat{\alpha}(L)^2 + \left( (2-\tau)\Lh + \frac{\rhof \tau L }{L-\rhof}\right)\hat{\alpha}(L) - (2-\tau).
		\label{eq:ctau_case2}
	\end{equation}
	To determine the largest allowable stepsize $\hat{\alpha}(L)$ so that $c(\hat{\alpha})\leq 0$, we calculate
	\begin{equation}
		L^*\coloneqq \min\{ L : c(\hat{\alpha}(L))\leq 0 ~, L\geq \Lf, L>\rhof\},
		\label{eq:Lstar}
	\end{equation}
	so that $\hat{\alpha}(L^*)$ is the desired stepsize. By some routine calculations, it can be shown that $c(\hat{\alpha}(L)) = -\frac{1}{2(L-\rhof)^2}q(L-\rhof) = -\frac{1}{2\eta^2}q(\eta)$,  where $q$ is given by the polynomial \eqref{eq:q_t<1} and $\eta \coloneqq L-\rhof$. Hence, if $\eta^*$ is the (strictly) positive root of $q$, then $L^* \coloneqq \max\{ \eta^*+\rhof, \Lf \}$. We now claim that $L^* = \eta^*+\rhof$. If $\Lf=\rhof$, this immediately holds since $\eta^*>0$. Suppose now that $\Lf > \rhof$. By the definition of $\hat{\alpha}(L)$, note that we may write $c$ as 
	\begin{equation*}
	c(\hat{\alpha}(L)) = c(\hat{\alpha}(L)) + 2L \Lf \hat{\alpha}(L)^2 - \frac{\tau L \Lf}{L-\rhof}\hat{\alpha}(L)
	\end{equation*}
	for any $L>\rhof$. 
	Simplifying this expression, we obtain
	\begin{equation}
		c(\hat{\alpha}(L)) =2\Lf (\Lh+L)\hat{\alpha}(L)^2 +\left( (2-\tau) \Lh - \frac{\tau L(\Lf-\rhof)}{L-\rhof} \right) \hat{\alpha}(L) - (2-\tau).
		\label{eq:calpha_alternative}
	\end{equation}
	Since $\Lf>\rhof$, $c(\hat{\alpha}(\Lf))$ is well-defined and can be calculated as 
	\begin{align*}
		c(\hat{\alpha}(\Lf)) & = 2\Lf (\Lf+\Lh)\hat{\alpha}(\Lf)^2 +\left( (2-\tau) \Lh - \tau \Lf\right) \hat{\alpha}(\Lf) - (2-\tau) \\
		& = \left( \hat{\alpha}(\Lf)-\frac{1}{\Lf+\Lh}\right) \left(2\Lf (\Lf+\Lh)\hat{\alpha}(\Lf)+(2-\tau)(\Lf+\Lh) \right) .
	\end{align*}
	 Since \eqref{eq:assume_case2} implies that $\hat{\alpha}(\Lf) >
	 \frac{1}{\Lf+\Lh}$, it follows that $c(\hat{\alpha}(\Lf))>0$.
	 Hence, $L^*\neq \Lf$ by the definition of $L^*$ in
	 \eqref{eq:Lstar}. The claim that $L^*=\eta^*+\rhof$ now follows.
	 We also note that since $c(\hat{\alpha}(L^*))\leq 0$, it follows
	 from \eqref{eq:ctau_case2} that $c(\alpha)<0$ for any $\alpha <
	 \hat{\alpha}(L^*)$, where $c(\alpha)$ is as defined in
	 \eqref{eq:c(alpha)}. 	\ifdefined\submit
		\else
		\qedhere
		\fi
\end{description}
\end{proof}

\begin{remark}
\label{remark:case2_magnitude}  To gain insight on the magnitude of the stepsize upper bound $\bar{\alpha}=\frac{\tau}{2\eta^*}$ in the second case of the above proof, consider $L\coloneqq \frac{\tau \Lh + 2\rhof}{2-\tau}$. By \eqref{eq:assume_case2}, $L>\Lf$ and $L>\rhof$. In addition, for this choice of $L$, $c(\hat{\alpha}(L))\leq 0$. By \eqref{eq:Lstar}, it holds that $L\geq L^*$, ensuring  $\frac{\tau}{2\eta^*} \geq \frac{\tau}{2(L-\rhof)} = \frac{2-\tau}{2(\Lh +\rhof)}$. To obtain an upper bound, we note that from \eqref{eq:calpha_alternative}, it can be verified that an alternative way to express $c(\hat{\alpha}(L))$ is
\begin{align*}
c	(\hat{\alpha}(L)) & =\left[ 2\Lf (\Lf+\Lh) \hat{\alpha}(L)^2+ ((2-\tau)\Lh - \tau \Lf)\hat{\alpha}(L) - (2-\tau) \right] \\
& \qquad + 2\Lf (L-\Lf) \hat{\alpha}(L)^2 +\left( \tau\Lf - \frac{\tau L(\Lf-\rhof)}{\Lf-\rhof} \right) \hat{\alpha}(L) \\
& = \left( \hat{\alpha}(L)-\frac{1}{\Lf+\Lh}\right) \left(2\Lf (\Lf+\Lh)\hat{\alpha}(L)+(2-\tau)(\Lf+\Lh) \right) \\
&\qquad + 2\Lf (L-\Lf)\hat{\alpha}(L)^2 + \frac{\tau\rhof(L-\Lf)}{L-\rhof}\hat{\alpha}(L).
\end{align*}
Note that the last two terms are nonnegative when $L=L^*$ since
$L^*\geq \Lf$ and $L^*> \rhof$. Since $c(\hat{\alpha}(L^*))= 0$, it
follows that $\hat{\alpha}(L^*)-\frac{1}{\Lf+\Lh}\leq 0$. That is, $\frac{\tau}{2\eta^*}\leq \frac{1}{\Lf+\Lh}$. In summary, when $\tau \in (0,1]$, we have
 $\frac{2-\tau}{2(\Lh+\rhof)}\leq \frac{\tau}{2\eta^*}\leq
\frac{1}{\Lf+\Lh}$  and therefore
\begin{equation}
	\bar{\alpha} \leq \frac{1}{L_f + L_h}.
	\label{eq:alphabound}
\end{equation}
\end{remark}

\begin{remark}[Stepsize comparison with \citep{BianZhang21}]
\label{remark:DYS_stepsize} For the case $\tau=1$, the above theorem implies that strict
monotonicity of $\{V_k\}$ holds when $\alpha<\bar{\alpha}$, where
\begin{equation*}
     \bar{\alpha} = \begin{cases}
    \frac{1}{\Lf+\Lh} & \text{if}~ \Lf - 2\rhof \geq \Lh, \\
    \frac{2}{\Lh + \rhof + \sqrt{(\Lh + \rhof)^2 + 8 (\rhof^2 + \Lf\Lh)}} & \text{otherwise}.   
\end{cases}
\end{equation*}
On the other hand, the bound derived in \cite[Lemma 3.3]{BianZhang21}
for the DYS algorithm (\textit{i.e.}, $p\equiv 0$) indicates that the stepsize $\alpha>0$ should satisfy 
\[ \frac{1}{2} \left( \frac{1}{\alpha}-\rhof\right) -\Lh - \left(  \frac{1}{\alpha}+\frac{\Lh}{2}\right)(2\alpha\rhof + 2\alpha \Lf + \alpha ^2\Lf^2) > 0,\]
\jhsolved{Thanks for checking my calculations. And sorry for the typo here.
The condition should be $>0$ (in the above line). The equivalence of
the two holds by the positivity of $\alpha$ (which I now emphasized).
Now, I guess with these, the original $d(\alpha)$ (with the signs I
put) is correct now? What do you think? And about why this has exactly
only one positive root, it follows by the Descartes' Rule of Signs.
Let me know if this fixes the problem. I will also check it again in
more detail a bit later. Thank you!}
\cpsolved{Verified the rest with no problem, and thanks for the hint
of Descartes' rule of signs.}
or equivalently, under the constraint $\alpha > 0$,
\cpsolved{My calculation has several places with a different sign,
please double check (and also the computations after that). This makes
more sense to me: if the leading coefficient was positive, it would be more
likely that when $\alpha$ approaches infinity, the polynomial is
positive but not negative. If that's indeed the case, all the
deliberate calculations from here on would be invalid? Anyway
therefore I didn't check further calculations in this remark.}
\[ d(\alpha) \coloneqq \Lf^2\Lh \alpha^3  + 2(\Lf^2 +\Lh \Lf + \rhof
	\Lh)\alpha^2 +  (5\rhof + 2\Lh + 4\Lf)\alpha
- 1 <0.\]
Hence, the upper bound for $\alpha$ is the unique positive root
$\hat{\alpha}$ of the polynomial $d$ given above.\cpsolved{Didn't verify if
	there is only one positive root. But if that's true, can we really
	say those 2 are equivalent, given that we multiply the first by
	the variable $2\alpha$? One simple example: consider $p_1(x) = x^{-1} + 1 +
	x + x^2$ and $p_2(x) = x p_1(x) = 1 + x + x^2 + x^3$. For the
former we have $p_1(-2) > 0$ and thus $p_2(-2) < 0$. This could give
different intervals between the two I guess.}\jhsolved{In the particular example you gave, the reason is that the chosen $x$ is negative. Indeed, given $x>0$, $xp(x)>0$ is equivalent to $p(x)>0$.}
Consider $c(\alpha)$ given in \eqref{eq:c(alpha)}. In the first case,
that is, when $\Lf - 2\rhof \geq \Lh$,
as long as $\Lf+\Lh >0$ and $\alpha > 0$,
we have
\[ c(\alpha) - d(\alpha) = -\Lf^2\Lh \alpha^3 - 2\rhof\Lh \alpha^2 - (5\rhof+\Lh+5\Lf) <0.\]
Hence, $-d(\bar{\alpha}) = c(\bar{\alpha}) - d(\bar{\alpha}) <0$, and since $\hat{\alpha}$ is the unique positive root of $d$, it follows that $\hat{\alpha}< \bar{\alpha}$. In the second case, recall that $\bar{\alpha} = \hat{\alpha}(L^*) = \frac{1}{2\eta^*} = \frac{1}{2(L^*-\rhof)}$, so that 
\begin{align*}
    c(\bar{\alpha}) & = c(\bar{\alpha})  + 2\rhof \Lf \bar{\alpha}^2 - \frac{\rhof \Lf}{L^*-\rhof}\bar{\alpha} \\
    & = 2(\Lf\Lh + \rhof \Lf )\bar{\alpha}^2 + \left( \Lh + \frac{\rhof (L^*-\Lf)}{L^*-\rhof} \right) \bar{\alpha} - 1 \\ 
    & \leq 2(\Lf\Lh +  \Lf^2 )\bar{\alpha}^2 + \left( \Lh + \rhof \right) \bar{\alpha} - 1,
\end{align*}
where the last inequality holds since $\rhof\leq \Lf$ and $L^* > \rhof$. Then,
provided that $\Lf+\Lh>0$, we have
\begin{align*}
    c(\bar{\alpha})  - d(\bar{\alpha})  \leq -\Lf^2\Lh \bar{\alpha}^3 -2\rhof \Lh \bar{\alpha}^2 - (4\rhof +\Lh + 4\Lf) < 0,
\end{align*}
and similar to the previous case, we get that $d(\bar{\alpha}) >0$ and
therefore $\hat{\alpha}< \bar{\alpha}$. This shows that our stepsize
upper bound is always larger than that in \citep{BianZhang21}. The
significant gap between the computed stepsizes is evident in
\cref{fig:compare_stepsize1}.
\alert{As we will see in the experiment in \cref{sec:stepsize}, a larger $\alpha$ leads to
faster convergence.
Our improved upper bound for $\alpha$ therefore also provides
numerical acceleration for DYS.}
\end{remark}

\ifdefined\submit
  \def\figscale{0.35}
\else
  \def\figscale{.75}
\fi

 \begin{figure}[tb]
      \centering
    \includegraphics[scale=\figscale]{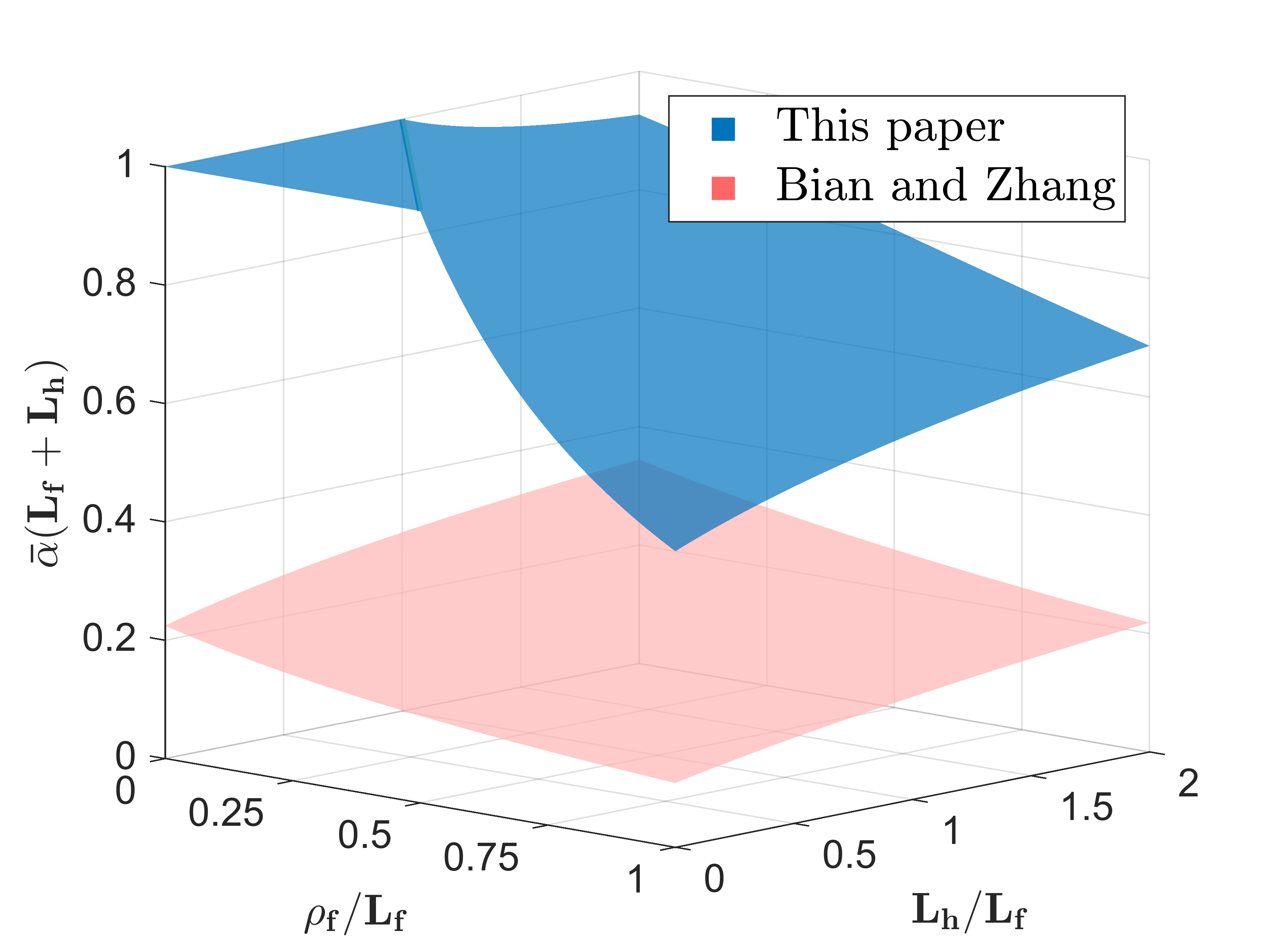}
     \caption{Comparison of stepsize upper bounds (denoted by
 		$\bar{\alpha}$) for the DYS algorithm in this paper
 		(with $\tau = 1$) and in \citep{BianZhang21}.}
     \label{fig:compare_stepsize1}
 \end{figure}

\begin{remark}[The case $\Lf+\Lh=0$]
\label{remark:Lf+Lh=0}
    Suppose that $\Lf+\Lh=0$, in which case $\alpha=\infty$ and $\gamma=\beta$. From \eqref{eq:diff_V}, we immediately obtain 
        \begin{align*}
		V_{k-1} - V_k &  \geq  \frac{1-\beta \rhop}{2\beta} \norm{y^k-y^{k-1}}^2  ,
		\label{eq:suff_decrease_Lf+Lh=0}
	\end{align*}
 for any $\tau>0$ (Note that in this case, the $x$ and $z$ sequences generated by \cref{alg:main} are irrelevant). Thus, we still obtain the desired inequality \eqref{eq:suff_decrease} by setting $c(\alpha)$ to be any negative number.  
\end{remark}

The following provides a result similar to the previous theorem but for parameters $\tau$ in $(1,2)$. Note that the parameter  $\sigmah$ in the theorem below exists; for instance, we can take $\sigmah = -\Lh$ by \cref{lemma:properties_Lsmooth}(a). 
\begin{theorem}[Stepsize for $\tau\in (1,2)$]
\label{thm:Vk_nonincreasing_1to2}
	Suppose that \cref{assume:problem} holds and $\Lf+\Lh>0$. Let
	$\sigmah\in \Re$ be such that $h-\frac{\sigmah}{2}\norm{\cdot}^2$ is convex.\footnote{ We note that there indeed exists $\sigmah\in \Re$ such that $h-\frac{\sigmah}{2}\norm{\cdot}^2$ is convex; for instance, we can take $\sigmah = -\Lh$ by \cref{lemma:properties_Lsmooth}(a).
 }
	 If $\{ (x^k,y^k,z^k)\}$ is generated by \cref{alg:main} with
 $\tau \in (1,2)$ and  $\alpha \in (0,  \bar{\alpha}]$, where 
 \[
	 \bar{\alpha} \coloneqq \begin{cases}
	     \bar{\alpha}_1 & \text{if}~ \tau \leq 2\bar{\alpha}_1(\Lf-\rhof) \\
        \frac{\tau}{2\eta^*} & \text{otherwise}	 \end{cases},
\]
   $\bar{\alpha}_1$ is the positive root of 
	\begin{equation}
		c(\alpha) \coloneqq 2\Lf (\Lf+\Lh)\alpha^2 +\left( \tau \Lh -2(\tau-1)\sigmah - \tau \Lf \right) \alpha - (2-\tau), 
		\label{eq:c_alpha_tau>1}
	\end{equation}
	and $\eta^*$ is the positive root of 
	\begin{equation*}
		q(\eta) \coloneqq 2(2-\tau)\eta^2 -\tau (\tau \Lh -2(\tau-1)\sigmah +\rhof \tau)\eta - \tau^2 (\rhof^2+\Lf\Lh),
		\label{eq:q_t>1}
	\end{equation*} 	
  then \eqref{eq:suff_decrease} holds 
	with $c(\alpha)$ given by \eqref{eq:c_alpha_tau>1} if $\tau \leq 2\bar{\alpha}_1(\Lf-\rhof)$, and 
	\[c(\alpha)= 2\Lf \Lh \alpha^2 + \left(  \tau \Lh -2(\tau-1)\sigmah+ \frac{\rhof \tau (\eta^*+\rhof) }{\eta^*}\right)\alpha- (2-\tau))\]
	otherwise. 
	In particular, $\{V_k\}$ is nonincreasing if $\alpha \leq \bar{\alpha}$ and $\beta\leq \rhop^{-1}$, 
 and strictly decreasing if at least one holds with strict inequality. 
	
\end{theorem}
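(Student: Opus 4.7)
The plan is to adapt the argument of \cref{thm:Vk_nonincreasing_0to1}, with the essential modification coming from a sign change of the coefficient $1-1/\tau$ in the analysis of the $\nabla h$-inner product once $\tau > 1$. As in the earlier proof, I would apply \cref{lemma:V_upperbound} at $(y^k, z^k, x^k)$ and \cref{lemma:V_lowerbound} at $(y^{k-1}, z^{k-1}, x^{k-1})$ with $\hat{x} = x^k$, then subtract to produce an inequality of the form \eqref{eq:diff_V}. Using $y^k = \tau^{-1}(z^k - z^{k-1}) + x^{k-1}$ together with \eqref{eq:z^k_change} yields the identities \eqref{eq:diff1} and \eqref{eq:diff2}, from which the computation \eqref{eq:inner_prod_f} for the $\nabla f$-inner product carries over verbatim.

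The crucial departure is in bounding $\langle \nabla h(x^{k-1}) - \nabla h(x^k),\,y^k - x^k\rangle$. The identity \eqref{eq:inner_prod_h} still holds, but now the coefficient $1 - 1/\tau$ on $\langle \nabla h(x^k) - \nabla h(x^{k-1}),\,x^k - x^{k-1}\rangle$ is \emph{positive}. Accordingly, I would no longer apply Cauchy--Schwarz as in \eqref{eq:inner_prod_h2}, but instead invoke the $\sigmah$-convexity of $h$ to obtain
\[
  \langle \nabla h(x^k) - \nabla h(x^{k-1}),\, x^k - x^{k-1}\rangle \;\geq\; \sigmah\,\|x^k - x^{k-1}\|^2,
\]
while continuing to control the cross term $\langle \nabla h(x^k) - \nabla h(x^{k-1}),\,\nabla f(x^k) - \nabla f(x^{k-1})\rangle$ by Cauchy--Schwarz together with Lipschitzness. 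Substituting these bounds into \eqref{eq:diff_V} yields an inequality of the same shape as \eqref{eq:diff_V2}, the only change being that $(1 - 1/\tau)\sigmah$ replaces $((\tau-1)/\tau)\Lh$ in the coefficient of $\|x^k - x^{k-1}\|^2$.

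From this point, the remainder of the proof mirrors \cref{thm:Vk_nonincreasing_0to1}. In Case~1 I would take $L = \Lf$, which forces $\alpha \leq \tau/(2(\Lf - \rhof))$ in order to make the coefficient of $\|\nabla f(x^k) - \nabla f(x^{k-1})\|^2$ nonpositive; absorbing this term via Lipschitzness of $\nabla f$ then produces \eqref{eq:suff_decrease} with $c(\alpha)$ equal to the polynomial in \eqref{eq:c_alpha_tau>1}. The admissibility condition reduces to requiring the positive root $\bar{\alpha}_1$ of $c$ to satisfy $\bar{\alpha}_1 \geq \tau/(2(\Lf-\rhof))$, which is exactly the condition $\tau \leq 2\bar{\alpha}_1(\Lf - \rhof)$ in the statement. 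In Case~2, I would optimize over $L > \rhof$ with $L \geq \Lf$ by setting $\hat\alpha(L) = \tau/(2(L - \rhof))$ so that the coefficient of $\|\nabla f\|^2$ vanishes, and then rewrite $c(\hat\alpha(L)) = -\frac{1}{2\eta^2}\,q(\eta)$ with $\eta = L - \rhof$; the minimal admissible $L$ is $L^* = \eta^* + \rhof$, giving $\bar\alpha = \tau/(2\eta^*)$.

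The main obstacle I anticipate is careful bookkeeping. First, one must confirm that the coefficient of $\|\nabla f\|^2$ in the modified \eqref{eq:diff_V2} still takes the form $-\alpha/\tau + 1/(2(L-\rhof))$, so that the same $L$-optimization strategy as before applies. Second, in Case~2 one must verify, analogously to the argument around \eqref{eq:calpha_alternative} and \eqref{eq:c_alpha_factors} in \cref{thm:Vk_nonincreasing_0to1}, that the complementary hypothesis $\tau > 2\bar{\alpha}_1(\Lf - \rhof)$ forces $c(\hat\alpha(\Lf)) > 0$, so that $L^* \neq \Lf$ in the definition \eqref{eq:Lstar} and hence $L^* = \eta^* + \rhof$. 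Third, existence and uniqueness of the positive root $\eta^*$ of $q$ should be checked; as in the previous theorem this follows from Descartes' rule of signs, since for $\tau < 2$ the leading coefficient of $q$ is positive while its constant term $-\tau^2(\rhof^2 + \Lf\Lh)$ is nonpositive.
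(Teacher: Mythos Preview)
Your proposal is correct and follows essentially the same route as the paper. The paper's proof consists of exactly the modified bound you describe---replacing \eqref{eq:inner_prod_h2} by the inequality obtained from monotonicity of $\nabla h - \sigmah\,\Id$ (your $\sigmah$-convexity estimate) together with Cauchy--Schwarz and Lipschitzness on the cross term---and then defers the remainder to the argument of \cref{thm:Vk_nonincreasing_0to1}, just as you outline.
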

 
 \begin{proof}
 Given  $\tau\geq 1$, we have from \eqref{eq:inner_prod_h} and the monotonicity of $\nabla h -\sigmah Id$ that  
 		\begin{equation}
	 	\left\langle \nabla h(x^{k-1})-\nabla h(x^k) ,y^k -x^k\right\rangle \geq  \left( \frac{\tau-1}{\tau}\right)\sigmah \norm{x^k-x^{k-1}}^2 - \frac{\alpha}{\tau}\Lf\Lh \norm{x^k-x^{k-1}}^2. 
	 	\label{eq:inner_prod_h>1}
 		\end{equation}
 	The rest of the proof follows arguments similar to those in the proof of \cref{thm:Vk_nonincreasing_0to1}. 
 \end{proof}

 \begin{remark} 
 \label{remark:tau>1_samebounds}
From \cref{thm:Vk_nonincreasing_1to2}, it can be shown  that the
stepsize bounds for $\tau\in (1,2)$ are also given by
\eqref{eq:alpha_bar_tau<1} if $\sigmah=\Lh$. In particular, when
$h=0$, we obtain that $\bar{\alpha} = \min \left\lbrace \frac{1}{\Lf},
\frac{2-\tau}{2\rhof}\right\rbrace$ for any $\tau\in (0,2)$, which is
the same as the one obtained in \cite[Theorem
4.1]{ThemelisPatrinos2018}, where the case $h=p=0$ is considered.
Another scenario for $\sigmah=\Lh$ is when $h$ is a quadratic
function with its Hessian being a positive multiple of the identity
matrix.
 \end{remark}
 
 \begin{remark}
 	\label{remark:case2_magnitude_>1}
	Similar to \cref{remark:case2_magnitude}, the bound
	\cref{eq:alphabound} also holds for $\tau \in (1,2)$.
	To see this, note that $c(\alpha)$ in
	\eqref{eq:c_alpha_tau>1} can also be written as 
    \ifdefined\submit
    $c(\alpha) =  \hat{c}(\alpha) + 2(\tau-1)(\Lh -\sigmah)\alpha, $
    \else 
 	\[ c(\alpha) =  \hat{c}(\alpha) + 2(\tau-1)(\Lh -\sigmah)\alpha, \]
  \fi
 	where $\hat{c}(\alpha) \coloneqq 2\Lf (\Lf+\Lh)\alpha^2 +\left(
	(2-\tau) \Lh - \tau \Lf \right) \alpha- (2-\tau)$. Since
	$\frac{1}{\Lf+\Lh}$ is the positive root of $\hat{c}(\alpha)$ (see
	\eqref{eq:c_alpha_factors}) and $\Lh -\sigmah \geq 0$ by
	\cref{remark:Lsmooth_implies_weaklyconvex},
	it holds that $c\left( \frac{1}{\Lf+\Lh}\right) =
	\frac{2(\tau-1)(\Lh-\sigmah)}{\Lf+\Lh}\geq 0$. Since $c(0) =
	\tau-2 <0$ and $\bar{\alpha}_1$ is the positive root of
	$c(\alpha)$, it follows that $\bar{\alpha}_1\leq
	\frac{1}{\Lf+\Lh}$. On the other hand, following the same
	arguments in \cref{remark:case2_magnitude}, it can be shown that
	$\tau / (2 \eta^*) \leq \bar{\alpha}_1$, so \cref{eq:alphabound}
	holds as claimed. 
 \end{remark}

 Lastly, we derive stepsize upper bounds for $\tau\geq 2$.
 \begin{theorem}[Stepsize for $\tau\in [2,\infty)$]	\label{thm:Vk_nonincreasing_>2}
 	Suppose that \cref{assume:problem} holds and $\rhoh\in
	[0,\Lh]$ such that $h+\frac{\rhoh}{2}\norm{\cdot}^2$ is convex.\footnote{By \cref{assume:problem}(a) and
\cref{remark:Lsmooth_implies_weaklyconvex}, we see that there indeed
exists $\rhoh\geq 0$ such that $h+\frac{\rhoh}{2}\norm{\cdot}^2$ is
convex.} In addition, suppose that $f$ is
	$\sigmaf$-strongly convex for some $\sigmaf > 0$. \cpsolved{Changed the description so that writing $\sigmaf$ here is more for notational definition. This is to be consistent with our modification in the subsequential convergence result. Does this look ok?}\jhsolved{This looks okay, but I guess we can explicitly state that $\sigmaf>0$ instead of $\sigma\in \Re$ (or probably write this in the first sentence as an implication of \eqref{eq:delta2})? }
 \cpsolved{Then let me revert back to the original setting.}
	Let $\tau \geq 2$ be such that 
 	\begin{equation}
 		\delta(\tau) \coloneqq (\tau \nu - \tau \theta_1 - 2(\tau-1)\theta_2)^2 - 8(\theta_0+\nu) (\tau - 2) > 0,
 		\label{eq:delta}
 	\end{equation}
 	and
        \begin{equation}
        \tau \nu - \tau \theta_1 - 2(\tau-1)\theta_2 > 0,
        \label{eq:delta2}
        \end{equation}
        where $\nu\coloneqq \frac{\sigmaf}{\Lf+\Lh}$,\cpsolved{Usually I
			think $\kappa$ is used for the condition number, which
			would be the reciprocal of this definition. Is there some
		specific reason that you use this notation?}\jhsolved{Oh, it must be because of my earlier notations in my computations that I indeed used the reciprocal then, but then I forgot to change the notation during revision. So we can change this notation to something else.} $\theta_0\coloneqq \frac{\Lh(\Lf^2-\sigmaf^2)}{\Lf(\Lf+\Lh)^2}$, $\theta_1 \coloneqq \frac{\Lh}{\Lf+\Lh}$ and $\theta_2 \coloneqq \frac{\rhoh}{\Lf+\Lh}$, so that
 	\begin{equation}
 		r(\mu) \coloneqq \tau^2(\theta_0+\nu) \mu^2 - \tau^2 \left(\nu - \theta_1 - \frac{2(\tau-1)}{\tau}\theta_2 \right) \mu + 2(\tau-2)
 		\label{eq:r_mu}
 	\end{equation}
 	has two distinct roots $\mu_*$ and $\mu^*$ with $0\leq \mu_*<
	\mu^* \leq 1$. If $\{ (x^k,y^k,z^k)\}$ is generated by
	\cref{alg:main} with stepsize $\alpha = \frac{\tau
	\mu}{2(\Lf+\Lh)}$, where $\mu >0$ satisfies $\mu \in [\mu_*,\mu^*]$, then \eqref{eq:suff_decrease} holds 
 	with $c(\alpha)$ given by 
 	\begin{equation}
 		c(\alpha) = 2\Lh\Lf \alpha^2 + \left(\tau \mu \sigmaf  + \tau \Lh + 2(\tau-1)\rhoh - \tau \sigmaf  -\frac{-\tau \Lh \sigmaf^2 \mu}{\Lf(\Lf+\Lh)} \right) \alpha - (2-\tau) .
 		\label{eq:c_alpha_tau>2}
 	\end{equation}
	In particular, $\{V_k\}$ is nonincreasing if $\alpha \leq \bar{\alpha}$ and $\beta\leq \rhop^{-1}$, 
 and strictly decreasing if at least one holds with strict inequality. 
 \end{theorem}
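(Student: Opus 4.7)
My plan is to follow the same overall strategy as \cref{thm:Vk_nonincreasing_0to1} and \cref{thm:Vk_nonincreasing_1to2}. I would start by combining \cref{lemma:V_upperbound} and \cref{lemma:V_lowerbound} (with $\hat{x}=x^k$ and $(y,z,x)=(y^{k-1},z^{k-1},x^{k-1})$) to obtain an analog of \eqref{eq:diff_V}, then rewrite the two principal inner products using the identities \eqref{eq:inner_prod_f} and \eqref{eq:inner_prod_h}. The cross-gradient term $-(\alpha/\tau)\langle\nabla h(x^k)-\nabla h(x^{k-1}),\nabla f(x^k)-\nabla f(x^{k-1})\rangle$ is handled as in \eqref{eq:inner_prod_h2} via Cauchy--Schwarz and Lipschitz gradients, yielding the lower bound $-(\alpha\Lh\Lf/\tau)\|x^k-x^{k-1}\|^2$.

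Two new ingredients distinguish this proof from the previous two. First, since $(1-1/\tau)\geq 1/2>0$ for $\tau\geq 2$, I invoke the $\rhoh$-weak convexity of $h$ (assumed in the hypotheses) to bound $\langle\nabla h(x^k)-\nabla h(x^{k-1}),x^k-x^{k-1}\rangle\geq -\rhoh\|x^k-x^{k-1}\|^2$; this produces the $2(\tau-1)\rhoh$ term in $c(\alpha)$. Second, the $\sigmaf$-strong convexity of $f$ must be brought into play, since the positive coefficient of $\|\nabla f(x^k)-\nabla f(x^{k-1})\|^2$ from \cref{lemma:properties_Lsmooth}(b) alone cannot offset the negative $-\alpha/\tau$ contribution when $\tau\geq 2$. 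Concretely, in the lower bound of $Q_{\alpha f}(y^k;x^{k-1})$ inside \cref{lemma:V_lowerbound}, I blend the bound from \cref{lemma:properties_Lsmooth}(b) (used with $\rhof=0$, $L=\Lf$) with the strong-convexity descent $f(x)\geq f(\hat{x})+\langle\nabla f(\hat{x}),x-\hat{x}\rangle+(\sigmaf/2)\|x-\hat{x}\|^2$ through a convex-combination parameter chosen so that the residual coefficient of $\|\nabla f(x^k)-\nabla f(x^{k-1})\|^2$ in $V_{k-1}-V_k$ vanishes identically. Using $\|\nabla f(x^k)-\nabla f(x^{k-1})\|^2\geq\sigmaf^2\|x^k-x^{k-1}\|^2$ on any nonnegative remainder then yields the $\sigmaf$-dependent contributions in the $\alpha$-linear part of $c(\alpha)$.

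Collecting all terms and multiplying by $2\tau\alpha$ expresses the coefficient of $\|x^k-x^{k-1}\|^2$ as $-c(\alpha)/(2\tau\alpha)$ with $c(\alpha)$ of the form \eqref{eq:c_alpha_tau>2}. Substituting $\alpha=\tau\mu/(2(\Lf+\Lh))$ and introducing $\nu$, $\theta_0$, $\theta_1$, $\theta_2$, a direct algebraic simplification yields $c(\alpha)=r(\mu)/2$ with $r$ as in \eqref{eq:r_mu}. Since $r$ is an upward parabola (leading coefficient $\tau^2(\theta_0+\nu)>0$) whose discriminant is $\delta(\tau)>0$ by \eqref{eq:delta} and whose linear coefficient is negative by \eqref{eq:delta2}, the vertex lies at positive $\mu$ and both roots are positive, i.e., $0\leq\mu_*<\mu^*$; verifying $r(1)\geq 0$ under the stated hypotheses pins $\mu^*\leq 1$. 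Hence $r(\mu)\leq 0$, equivalently $c(\alpha)\leq 0$, for $\mu\in[\mu_*,\mu^*]$, which gives \eqref{eq:suff_decrease}. The most delicate step will be tuning the convex-combination parameter in the second ingredient above so that the resulting $c(\alpha)$ matches the stated form exactly, since the algebra is sensitive both in the $\alpha^2$ coefficient (involving $\Lh(\Lf^2-\sigmaf^2)/\Lf$ after substitution) and in the $\alpha$-linear coefficient containing $\tau\mu\sigmaf$ and $\tau\Lh\sigmaf^2\mu/[\Lf(\Lf+\Lh)]$.
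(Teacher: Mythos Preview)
Your proposal follows essentially the same route as the paper's proof: blend cocoercivity (\cref{lemma:properties_Lsmooth}(b) with $\rhof=0$, $L=\Lf$) and strong convexity via a parameter $\mu\in[0,1]$, use $\rhoh$-weak convexity for the $h$ inner product (i.e., \eqref{eq:inner_prod_h>1} with $\sigmah=-\rhoh$), and then reduce $c(\alpha)\leq 0$ to $r(\mu)\leq 0$ after the substitution $\alpha=\tau\mu/(2(\Lf+\Lh))$.

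There is one internal inconsistency you should resolve before executing the plan. You write that the blending parameter is ``chosen so that the residual coefficient of $\norm{\nabla f(x^k)-\nabla f(x^{k-1})}^2$ \ldots\ vanishes identically,'' but then say you will apply $\norm{\nabla f(x^k)-\nabla f(x^{k-1})}^2\geq\sigmaf^2\norm{x^k-x^{k-1}}^2$ to a ``nonnegative remainder.'' These cannot both hold. If you force the coefficient to vanish you obtain $\mu=2\alpha\Lf/\tau$, which disagrees with the theorem's relation $\mu=2\alpha(\Lf+\Lh)/\tau$ and produces a different $c(\alpha)$ (the $\alpha^2$ coefficient becomes $2\Lf(\Lh+\sigmaf)$ rather than $2\Lh\Lf$, and the $\sigmaf^2$ term never appears). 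The paper instead fixes $\alpha=\tau\mu/(2(\Lf+\Lh))$, which makes the coefficient of $\norm{\nabla f(x^k)-\nabla f(x^{k-1})}^2$ equal to $-\alpha/\tau+\mu/(2\Lf)=\mu\Lh/(2\Lf(\Lf+\Lh))\geq 0$, strictly positive whenever $\Lh>0$; the strong-convexity bound is then applied to \emph{this} nonnegative remainder, and that is exactly what produces the $\tau\Lh\sigmaf^2\mu/[\Lf(\Lf+\Lh)]$ term in \eqref{eq:c_alpha_tau>2}. Once you adopt this choice, the rest of your outline (including checking $r(1)\geq 0$ to obtain $\mu^*\leq 1$, which is a valid alternative to the paper's vertex-symmetry argument) goes through.
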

 \begin{proof}
 	$\sigmaf$-strong convexity implies that 
  \[ f(x) \geq f(\hat{x}) + \lla \nabla f(\hat{x}), x-\hat{x}\rla +  \frac{\sigmaf}{2}\norm{x-\hat{x}}^2,\quad \forall x,\hat{x}\in \Re^n.\]
By the above inequality together with the $\Lf$-smoothness of $f$ and \eqref{eq:lowerbound_intermsofgrad}, we have 
 	\[ f(x) \geq f(\hat{x}) + \lla \nabla f(\hat{x}), x-\hat{x}\rla + \frac{\mu}{2\Lf}\norm{\nabla f(x)-\nabla f(\hat{x})}^2 + \frac{(1-\mu)\sigmaf}{2}\norm{x-\hat{x}}^2, \]
 	for any $x,\hat{x}\in \Re^n$ and $\mu\in [0,1]$. Following the arguments in \cref{lemma:V_lowerbound}, we obtain 
 		\begin{align*}
 		& V_{\Lambda,\xi}(y,z,x) \geq \\
           & \quad Q_{\alpha (f+h)}(y^\plus;\hat{x})  + p(y^\plus) + g(y^\plus) + \left\langle \nabla f(\hat{x})-\nabla f(x) - \frac{1}{\alpha}(\hat{x}-x),x-y^\plus \right\rangle \\
 		&\quad + \left\langle \nabla h(x)-\nabla h(\hat{x}),y^\plus -\hat{x}\right\rangle + \frac{\mu}{2\Lf}\norm{\nabla f(x)-\nabla f(\hat{x})}^2 \\
 		&\quad- \left( \frac{\Lh}{2} + \frac{1}{2\alpha}-\frac{(1-\mu)\sigmaf}{2} \right) \norm{\hat{x}-x}^2  + \frac{1-\beta \rhop}{2\beta} \norm{y^\plus-y}^2.
 	\end{align*}
 	By using this bound and inequality \eqref{eq:inner_prod_h>1} with $\sigmah=-\rhoh$, we obtain by following the same arguments and calculations in the proof of \cref{thm:Vk_nonincreasing_0to1} that 
 	\begin{align}
 		V_{k-1} - V_k &  \geq \left( -\frac{\alpha}{\tau}+ \frac{\mu}{2\Lf}\right) \norm{\nabla f(x^{k-1})-\nabla f(x^k)}^2 \notag \\
 		& \quad 
 		+ \left[ \frac{2-\tau}{2\tau\alpha}-\frac{\Lh}{2} + \frac{(1-\mu)\sigmaf}{2} -\frac{\tau-1}{\tau}\rhoh - \frac{\alpha}{\tau}\Lh \Lf\right]\norm{x^k-x^{k-1}}^2  \notag \\ 
 		& \quad +  \frac{1-\beta \rhop}{2\beta} \norm{y^k-y^{k-1}}^2  
 	\end{align}
 	By setting $\alpha = \frac{\tau \mu}{2(\Lf+\Lh)}$ with $\mu\in
(0,1]$,\cpsolved{How did you decide this value of $\alpha$? If you do not
	replace $\alpha$ by this value but simply use strong convexity to
	merge the first two terms, would it be possible to get a better
(though might be uglier) upper bound for $\alpha$?}\jhsolved{Hhmm. In one of the earlier drafts, it is actually set to $\frac{\tau\mu}{2\Lf}$ to get rid of the first term (similar to the technique used in Case 2 of the proof for $\tau\in (0,1]$. But I realized it's quite hard to relate it to $\frac{1}{\Lf+\Lh}$, due to the difference in the denominators, which is important because in \cref{prop:bounded}, we need to have $\alpha<\frac{1}{\Lf+\Lh}$.  So then I decided to set $\alpha$ as a multiple of $\frac{1}{\Lf+\Lh}$, which will make the comparison easier. I think what you said can also be done, but might be tedious and the expressions might indeed be uglier. The calculations will take a lot of work, so I will postpone this task later.}
\cpsolved{Indeed I also figure the calculations would be lengthy and thus opted for asking instead of deriving.. I think this doesn't matter that much at the current moment because this theorem has limited application in real cases. Just skip it would be totally fine.}
	we have
 	\begin{align*}
 		\left( -\frac{\alpha}{\tau}+ \frac{\mu}{2\Lf}\right) \norm{\nabla f(x^{k-1})-\nabla f(x^k)}^2  
   \geq \frac{\mu \Lh\sigmaf^2}{2\Lf (\Lf+\Lh)}
		\norm{x^{k-1}-x^k}^2,
 	\end{align*}
 	where the inequality follows from the strong convexity of $f$.
	Therefore, \eqref{eq:suff_decrease} holds
 	with $c(\alpha)$ given by \eqref{eq:c_alpha_tau>2}. To obtain
	a nonincreasing sequence $\{V_k\}$, we need to find the range of
	$\mu$ that makes $c(\alpha)\leq 0$.  Plugging in $\alpha = \frac{\tau \mu}{2(\Lf+\Lh)}$ in \eqref{eq:c_alpha_tau>2}, some algebraic calculations lead to
    \ifdefined\submit 
    $c(\alpha) = c\left( \frac{\tau \mu}{2(\Lf+\Lh)}\right) = \frac{1}{2}r(\mu), $
     \else 
 	\[ c(\alpha) = c\left( \frac{\tau \mu}{2(\Lf+\Lh)}\right) = \frac{1}{2}r(\mu), \]
  \fi 
 	with $r(\mu)$ given by \eqref{eq:r_mu}.
	We note that $r(\mu)$ has two distinct roots $\mu_*$ and $ \mu^*$
	with $\mu_*< \mu^*$ if and only if its discriminant is positive.
	This condition is equivalent to \eqref{eq:delta}. Meanwhile, since
	\eqref{eq:delta2} holds and $\rhoh\geq 0$, we see that
 	\[ 0< \frac{\tau \nu - \tau \theta_1 - 2(\tau-1)\theta_2}{2\tau (\theta_0 + \nu)}\leq \frac{\tau\nu}{2\tau \nu} = \frac{1}{2}.\]
 That is to say, the first coordinate of the vertex of the parabola
 defined by $r(\mu)$ lies in $(0,\frac{1}{2}]$. Since $\tau\geq 2$, it
 follows that $0\leq \mu_* < \mu^* \leq 1$. Thus, $c(\alpha) \leq 0$ for $\alpha = \frac{\tau \mu}{2(\Lf+\Lh)}$ with $\mu \in [\mu_*,\mu^*]$, and $c(\alpha)<0$ if $\mu \in (\mu_*,\mu^*)$. This completes the proof. 
 \end{proof}

 \begin{remark}[Stepsize for $\tau=2$]
 \label{remark:tau=2}
Suppose that $\tau=2$. Then \cref{eq:delta,eq:delta2} are equivalent
to having $\sigmaf > \Lh + \rhoh$, and the roots of \eqref{eq:r_mu}
are $\mu_* =0$ and $\mu^* = \frac{\Lf (\Lf+\Lh)(\sigmaf - \Lh -
\rhoh)}{\Lh(\Lf^2-\sigmaf^2) + \sigmaf \Lf (\Lf +\Lh)}$. Hence,
$c(\alpha)$ is strictly negative for all $\alpha$ such that \cpsolved{I
	think the value above is correct, but can't see where the one
below comes from.}\jhsolved{When $\tau=2$, the range for $\alpha$ is given
	by $\frac{\mu_*}{2(\Lf+\Lh)} \leq \alpha \leq
	\frac{\mu^*}{\Lf+\Lh}$, according to the theorem. So we just plug
in the computed roots.}\cpsolved{Ah sorry, I forgot $\alpha$ is not simply
$\mu$\ldots my bad.}
\[ 0 < \alpha < \frac{\Lf (\sigmaf - \Lh - \rhoh)}{\Lh(\Lf^2-\sigmaf^2) + \sigmaf \Lf (\Lf +\Lh)}.\]
 \end{remark}
 \begin{remark}
 	If $h\equiv0$, the condition \eqref{eq:delta2} automatically holds since $\sigmaf >0$. Moreover, the condition \eqref{eq:delta} with the constraint $\tau\geq 2$ is equivalent to having 
 	\begin{equation}
 		2\leq\tau < \frac{4}{1+\sqrt{1-\nu}}\quad \text{or} \quad \tau > \frac{4}{1-\sqrt{1-\nu}}.
 		\label{eq:delta_h=0}
 	\end{equation}
 	On the other hand, the roots of \eqref{eq:r_mu} are given by 
 	\[ \mu_* =\frac{1}{2} - \frac{\sqrt{\nu (\nu \tau^2 - 8\tau + 16)}}{2\tau \nu} \quad \text{and} \quad \mu^* =\frac{1}{2} + \frac{\sqrt{\nu (\nu \tau^2 - 8\tau + 16)}}{2\tau \nu} .\]
 	\cref{thm:Vk_nonincreasing_>2} asserts that if we choose $\tau$
	that satisfies \eqref{eq:delta_h=0}, then $\{ V_k\}$ is strictly
	decreasing provided that the stepsize $\alpha$ satisfies
 	\begin{equation}
 		\label{eq:alpha_range}
 \frac{\tau \nu -\sqrt{\nu (\nu \tau^2 - 8\tau + 16)}}{4\sigmaf}  < \alpha < \frac{\tau \nu +\sqrt{\nu (\nu \tau^2 - 8\tau + 16)}}{4\sigmaf},
\end{equation}
 	which are the bounds obtained in \cite[Theorem 4.1]{ThemelisPatrinos2018}, where the case $h\equiv p\equiv 0$ is considered. 
 	However, in the said result, the analysis restricts $\tau$ to
	satisfy only the first condition in \eqref{eq:delta_h=0}, due to
	their imposed constraint that $\alpha \Lf $ must be at most 1.
	Meanwhile, the analysis we provide in the proof of
	\cref{thm:Vk_nonincreasing_>2} does not require this condition to
	establish the nonincreasing property, and therefore we have shown
	that the range of $\tau$ can be widened to include those that
	satisfy $\tau\geq \frac{4}{1-\sqrt{1-\nu}}$. For instance, if
	we are given $\nu=3/4$, following \cref{eq:delta_h=0} we can
	choose $\tau = 12$. By \eqref{eq:alpha_range}, for this value we
	may allow any stepsize $\alpha$ that satisfies
	$3-\frac{\sqrt{21}}{3}<\alpha \Lf < 3+\frac{\sqrt{21}}{3}$, and the lower bound is strictly greater than 1. Nevertheless, we point out that this wider range of $\tau$ provided above is only sufficient to guarantee monotonicity of $\{V_k\}$. If we want to establish boundedness of a sequence generated by \cref{alg:main}, the restriction that $\alpha(\Lf+\Lh)<1$ will inevitably be required (see \cref{prop:bounded}).
 \end{remark}

 \subsection{Subsequential convergence}
 After obtaining the stepsize upper bounds, our next goal is to show
 subsequential convergence and convergence rates of \cref{alg:main}.
 We first establish the boundedness of its iterate sequence.

 \begin{proposition}\label{prop:bounded}
 	Suppose that \cref{assume:problem} holds, $\Psi$ has bounded level
sets and $\beta\in (0,\rhop^{-1}]$. 
In
addition, suppose that $\Lf+\Lh>0$, and $\alpha, \tau>0$ are chosen such that
 	\begin{enumerate}[(a)]
 		\item $\tau \in (0,1]$ and $\alpha\in (0,\bar{\alpha})$, where $\bar{\alpha}$ is given in \cref{thm:Vk_nonincreasing_0to1}; 
 		\item $\tau\in (1,2)$ and $\alpha\in (0,\bar{\alpha})$, where $\bar{\alpha}$ is given in \cref{thm:Vk_nonincreasing_1to2}; or
 		\item   $\tau \geq 2$ satisfies \cref{eq:delta,eq:delta2}, and
			$\alpha\in \left(
			\frac{\tau\mu_*}{2(\Lf+\Lh)},\frac{\tau\mu^*}{2(\Lf+\Lh)}\right)\cap
			\left(0,\frac{1}{\Lf+\Lh}\right)$.
 	\end{enumerate}
 	If $\{(x^k,y^k,z^k)\}$ is generated by \cref{alg:main}, then 
 		\begin{enumerate}[(i)]
 			\item $\{(x^k,y^k,z^k)\}$ is bounded; and 
 			\item $\norm{(x^k,y^k,z^k) - (x^{k-1},y^{k-1},z^{k-1}) }\to 0$.
 		\end{enumerate}
 \end{proposition}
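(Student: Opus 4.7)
The plan is to derive summable sufficient descent for a suitable merit function, lower-bound that merit function by $\Psi$ (the objective), use the bounded level-set hypothesis to conclude boundedness of $\{y^k\}$, and then propagate boundedness to $\{x^k\}$ and $\{z^k\}$ via the algorithm's update equations.

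First, I would verify that under each of the parameter settings (a)--(c), the hypotheses of \cref{thm:Vk_nonincreasing_0to1}, \cref{thm:Vk_nonincreasing_1to2}, or \cref{thm:Vk_nonincreasing_>2} hold with strict negativity of $c(\alpha)$ (because $\alpha$ lies in the open interior of the admissible range). In particular, by \cref{remark:case2_magnitude}, \cref{remark:case2_magnitude_>1}, and the explicit constraint in case (c), one has $\alpha(\Lf+\Lh) < 1$; combined with $\beta\leq\rhop^{-1}$ this will be used crucially below. Summing \cref{eq:suff_decrease} from $k=1$ to $k=T$ and using strict negativity of $c(\alpha)$ yields
\begin{equation*}
\sum_{k=1}^{T} \left( -\frac{c(\alpha)}{2\tau\alpha}\norm{x^k-x^{k-1}}^2 + \frac{1-\beta\rhop}{2\beta}\norm{y^k-y^{k-1}}^2 \right) \leq V_0 - V_T ,
\end{equation*}
so summability of $\{\|x^k-x^{k-1}\|^2\}$ (and, when $\beta<\rhop^{-1}$, of $\{\|y^k-y^{k-1}\|^2\}$) will follow as soon as we show $\{V_T\}$ is bounded below.

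The key step is the lower bound $V_k \geq \Psi(y^{k+1})$. Starting from $V_k = \Phi_{\Lambda,\xi^k}(y^{k+1};y^k,z^k,x^k)$, I would apply the descent lemma (\cref{lemma:properties_Lsmooth}(a)) to the smooth term $f+h$ going from $x^k$ to $y^{k+1}$, giving
\begin{equation*}
Q_{\alpha(f+h)}(y^{k+1};x^k) \geq (f+h)(y^{k+1}) + \frac{1-\alpha(\Lf+\Lh)}{2\alpha}\norm{y^{k+1}-x^k}^2,
\end{equation*}
and apply \cref{eq:p_descentlemma} to the $p$-term going from $y^k$ to $y^{k+1}$, giving
\begin{equation*}
Q_{\beta p}(y^{k+1};y^k,\xi^k) \geq p(y^{k+1}) + \frac{1-\beta\rhop}{2\beta}\norm{y^{k+1}-y^k}^2.
\end{equation*}
Adding $g(y^{k+1})$ and using $\alpha(\Lf+\Lh)<1$ and $\beta\rhop\leq 1$, both quadratic remainders are nonnegative, so $V_k \geq \Psi(y^{k+1})$. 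Since $\Psi$ is lower semicontinuous with bounded level sets, $\Psi$ is bounded below, whence $V_k$ is bounded below and the summability claims above hold; in particular $\|x^k-x^{k-1}\|\to 0$ and, combined with $\|z^k-z^{k-1}\|=\tau\|y^k-x^{k-1}\|\leq(1+\alpha\Lf)\|x^k-x^{k-1}\|$ via \cref{eq:diff1}, also $\|z^k-z^{k-1}\|\to 0$.

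Finally I address boundedness. Since $V_k$ is nonincreasing, $\Psi(y^{k+1})\leq V_k \leq V_0$, so $\{y^{k+1}\}$ lies in the level set $\{x:\Psi(x)\leq V_0\}$, which is bounded by hypothesis; hence $\{y^k\}$ is bounded. From \cref{eq:diff2}, $\|y^k-x^k\|\leq\frac{|\tau-1|}{\tau}\|x^k-x^{k-1}\|+\frac{\alpha\Lf}{\tau}\|x^k-x^{k-1}\|$, which is bounded (indeed tending to zero), so boundedness of $\{y^k\}$ transfers to $\{x^k\}$. Then $z^k=x^k+\alpha\nabla f(x^k)$ together with continuity of $\nabla f$ yields boundedness of $\{z^k\}$, completing (i). For (ii), the summability already gives $\|x^k-x^{k-1}\|\to 0$; this together with $\|y^k-x^k\|\to 0$ and $\|y^k-y^{k-1}\|\to 0$ (the latter from summability when $\beta<\rhop^{-1}$, or from the two relations above when $\beta=\rhop^{-1}$), and the bound on $\|z^k-z^{k-1}\|$ noted earlier, yields the desired convergence of consecutive differences to zero. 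The main obstacle is verifying the lower bound $V_k\geq\Psi(y^{k+1})$ with uniformly nonnegative residual terms; this is where the crucial constraint $\alpha<1/(\Lf+\Lh)$ (established a posteriori through Remarks~\ref{remark:case2_magnitude} and~\ref{remark:case2_magnitude_>1}, and imposed directly in case (c)) enters.
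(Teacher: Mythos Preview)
Your proposal is correct and follows essentially the same approach as the paper: derive the lower bound $V_{k-1}\geq\Psi(y^{k})+\frac{1-\alpha(\Lf+\Lh)}{2\alpha}\|y^{k}-x^{k-1}\|^{2}+\frac{1-\beta\rhop}{2\beta}\|y^{k}-y^{k-1}\|^{2}$ via the descent lemma for $f+h$ and \cref{eq:p_descentlemma} for $p$, use $\alpha(\Lf+\Lh)<1$ (from \cref{remark:case2_magnitude}, \cref{remark:case2_magnitude_>1}, or explicitly in (c)) together with level-boundedness of $\Psi$ to get boundedness of $\{y^k\}$ and of $\{V_k\}$, and then conclude (ii) from the sufficient-decrease inequality. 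The only cosmetic difference is that the paper transfers boundedness from $\{y^k\}$ to $\{x^{k-1}\}$ directly via the residual term $\frac{1-\alpha(\Lf+\Lh)}{2\alpha}\|y^{k}-x^{k-1}\|^{2}$, whereas you first establish $\|x^k-x^{k-1}\|\to 0$ and then invoke \cref{eq:diff2}; both routes are valid.
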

 \begin{proof}
	 We recall from \cref{eq:envelope} that
 $V_{\Lambda,\xi}(y,z,x) = \Phi_{\Lambda,\xi}(y^\plus; y,z,x)$ where
 $y^+ \in \argmin_{w\in\Re^n} \Phi_{\Lambda,\xi}(w; y,z,x)$. We then
 have from \cref{eq:Phi} that
 	\begin{align}
 		V_{\Lambda,\xi}(y,z,x) & = Q_{\alpha(f+h)}(y^\plus;x) + Q_{\beta p}(y^\plus;y,\xi) + g(y^\plus) \notag \\
 		& \geq f(y^\plus) + h(y^\plus) + \frac{1-\alpha (\Lf+\Lh)}{2\alpha} \norm{y^\plus - x}^2 +  Q_{\beta p}(y^\plus;y,\xi) + g(y^\plus) \notag \\
 		& \overset{\eqref{eq:Qpi_lowerbound}}{\geq}\Psi(y^+)  + \frac{1-\alpha (\Lf+\Lh)}{2\alpha} \norm{y^\plus - x}^2 + \frac{1-\beta \rhop}{2\beta} \norm{y^\plus - y}^2  ,\label{eq:Vlowerbound_Psi}
 	\end{align}
 	where the first inequality holds by $(\Lf+\Lh)$-smoothness of $f+h$ and \cref{lemma:properties_Lsmooth}(a). Setting
	$(y,z,x) = (y^{k-1},z^{k-1},x^{k-1})$, we have from
	\cref{eq:Vlowerbound_Psi} that
 	\begin{equation}
 		V_{k-1} \geq \Psi(y^k) + \frac{1-\alpha (\Lf+\Lh)}{2\alpha} \norm{y^k- x^{k-1}}^2 + \frac{1-\beta \rhop}{2\beta} \norm{y^k- y^{k-1}}^2 .
 		\label{eq:Vk_lowerbound}
 	\end{equation}
 	By our choice of $(\alpha,\beta,\gamma)$, it follows from
	\cref{thm:Vk_nonincreasing_0to1,thm:Vk_nonincreasing_1to2,thm:Vk_nonincreasing_>2}
	that $\{V_k\}$ is strictly decreasing.  Meanwhile, we have from
	\cref{remark:case2_magnitude} and \cref{remark:case2_magnitude_>1}
	that under conditions (a) and (b), respectively, it holds that $\alpha <
	\frac{1}{\Lf+\Lh}$. In condition (c), it is explicitly assumed
	that  $\alpha < \frac{1}{\Lf+\Lh}$. Hence, the second term in
	\eqref{eq:Vk_lowerbound} is always bounded below by zero. By our
	choice of $\beta$, the last term is also nonnegative. With these,
	we conclude that $\{\Psi(y^k)\}$ is bounded above, and therefore
	$\{y^k\}$ is a bounded sequence by the level-boundedness of
	$\Psi$. On the other hand, since $\Psi$ is closed, $\Psi$ is also
	bounded below. It thus
	follows from \cref{eq:Vk_lowerbound} that $\{\norm{y^k-x^{k-1}}\}$
	is bounded above. Since $\{y^k\}$ is bounded, it then follows that
	$\{x^{k-1}\}$ is also bounded. Finally, since $z^k = x^k+\alpha \nabla f(x^k)$ by \eqref{eq:x-step} and $\nabla f$ is continuous, we obtain the boundedness of $\{z^k\}$. This completes the proof of part (i). 
 	
 	To prove part (ii),  note that since $\{V_k\}$ is a bounded decreasing sequence, it follows that $V_{k-1}-V_k \to 0$.  Meanwhile, under our hypotheses, the inequality \eqref{eq:suff_decrease} holds and the coefficient of $\norm{x^k-x^{k-1}}^2$ is strictly positive since $\alpha < \frac{1}{\Lf+\Lh}$ as mentioned above. Using these facts, we have that $\norm{x^k-x^{k-1}}\to 0$. From \eqref{eq:z^k_change} and the $\Lf$-smoothness of $f$, we have
 	\begin{equation}
 	    \norm{z^k-z^{k-1}} = \norm{x^k - x^{k-1} + \alpha \left( \nabla f(x^k) - \nabla f(x^{k-1})\right) }\leq (1+\alpha \Lf)\norm{x^k-x^{k-1}},
      \label{eq:prox_lowerbound}
 	\end{equation}
 	and so $\norm{z^k-z^{k-1}}\to 0$. From \eqref{eq:z-step}, we have
	$\norm{y^{k}-x^{k-1}}\to 0$, and therefore $\norm{y^{k} -
	y^{k-1}}\to 0$ by the triangle inequality. 
 \end{proof}

 \begin{remark}
      When $\Lf+\Lh=0$, $x$ and $z$ play no role in the algorithm.  Similar to \eqref{eq:Vlowerbound_Psi}, we can obtain the following inequality:
   	\begin{align*}
 		V_{\Lambda,\xi}(y,z,x)\geq \Psi(y^+)  + \frac{1-\beta \rhop}{2\beta} \norm{y^\plus - y}^2  .
 	\end{align*}
       Hence, if $\Psi$ has bounded level sets and $\beta<\frac{1}{\rhop}$, we obtain the boundedness of $\{y^k\}$ and that $\norm{y^{k}-y^{k-1}}\to 0$. 
 \end{remark}

 We will next show that accumulation points of the $x$ and $y$
 sequence generated from \cref{alg:main} are \textit{stationary
 points} of $\Psi$. We say that a point $x^*$ is a {stationary point} of $\Psi$ (see \cite[Definition 4.1]{WenChenPong2018}) if 
  \begin{equation}
  0\in \nabla f(x^*) + \partial g(x^*) + \nabla h(x^*) + \partial p(x^*).
\label{eq:stationary}
  \end{equation}

Note that any local minimizer of $\Psi$ is a stationary point of $\Psi$ (see \cite[Theorem 2(i)]{PhamLeThi1997}). 
To establish stationarity of accumulation points, we will prove that
these points are fixed points of \alert{an operator associated with
\cref{alg:main}, which we derive as follows}. Observe that \cref{alg:main} can be concisely written
as fixed-point iterations of  \alert{the operator $T_{\Lambda}:\Re^n\times \Re^n \to \Re^n\times \Re^n$, given by
\[ (y^{k+1},z^{k+1}) \in T_{\Lambda}(y^k,z^k), \quad k=0,1,\dots, \]
where $T_{\Lambda}$ is defined as }
\begin{equation}
	T_{\Lambda}(y,z) \coloneqq  \left\lbrace \begin{pmatrix}
		y^\plus \\
		z + \tau (y^\plus - \prox_{\alpha f}(z))
	\end{pmatrix} : y^\plus  \in P_{\Lambda}(y,z) \right\rbrace .
 \label{eq:Tlambda}
\end{equation}
We say that $(y,z)$ is a \emph{fixed point} of $T_{\Lambda} $ if
$(y^*,z^*) \in T_{\Lambda}(y^*,z^*)$. The set of fixed points of
$T_{\Lambda}$ is denoted by $\Fix(T_{\Lambda})$.
The following proposition shows that fixed points of $T_\Lambda$ are
stationary points of $\Phi$.\cpsolved{I feel that we usually say it is not
difficult to prove when we want to omit the proof, so I removed that
sentence.}

\begin{proposition}
\label{prop:fixed_implies_stationary}
	Suppose that $f$ is $\Lf$-smooth, $g$ is proper and closed, $h$ is continuously differentiable, and $p$ is continuous on an open set containing $\dom (g)$. In addition, let $\alpha, \beta,
	\gamma>0$ satisfy $\frac{1}{\gamma}= \frac{1}{\alpha}+ \frac{1}{\beta}$ with $\alpha < \Lf^{-1}$. If $(y^*,z^*)\in \Fix (T_{\Lambda})$, then $y^*$ is a stationary point of $\Psi$. 
\end{proposition}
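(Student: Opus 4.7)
The plan is to unpack the fixed-point equation componentwise and then extract the stationarity condition from the optimality condition of the inner minimization that defines $P_\Lambda$.

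First, because $\alpha < L_{\mathrm{f}}^{-1}$ and $f$ is $L_{\mathrm{f}}$-smooth, the function $f + \frac{1}{2\alpha}\|\cdot\|^2$ is strongly convex, so $\prox_{\alpha f}$ is single-valued. Set $x^* := \prox_{\alpha f}(z^*)$. By the definition of $T_\Lambda$ in \eqref{eq:Tlambda}, the condition $(y^*,z^*) \in T_\Lambda(y^*,z^*)$ means there exists $y^+ \in P_\Lambda(y^*,z^*)$ with $y^+ = y^*$ and
\[
z^* = z^* + \tau(y^* - x^*),
\]
which, since $\tau > 0$, forces $y^* = x^*$. In particular, $y^* \in \dom(g)$ (since $y^* \in P_\Lambda(y^*,z^*) \subseteq \dom(g)$), so \cref{lemma:prox_formula} applies.

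Next, invoking \cref{lemma:prox_formula} with the equality $y^* \in P_\Lambda(y^*,z^*)$, there exists $\xi^* \in \partial p(y^*)$ such that
\[
y^* \in \argmin_{w \in \Re^n} \Phi_{\Lambda,\xi^*}(w; y^*, z^*, x^*).
\]
Recalling the definition of $\Phi_{\Lambda,\xi^*}$ in \eqref{eq:Phi}, the first three terms $Q_{\alpha(f+h)}(\cdot\,; x^*)$ and $Q_{\beta p}(\cdot\,; y^*,\xi^*)$ are continuously differentiable in $w$, while $g$ is proper and closed. Hence, by the standard sum rule for subdifferentials in the presence of smooth summands, the Fermat optimality condition at $w = y^*$ yields
\[
0 \in \nabla f(x^*) + \nabla h(x^*) + \frac{1}{\alpha}(y^* - x^*) + \xi^* + \frac{1}{\beta}(y^* - y^*) + \partial g(y^*).
\]

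Finally, substituting $y^* = x^*$ cancels the terms $\frac{1}{\alpha}(y^* - x^*)$ and $\frac{1}{\beta}(y^* - y^*)$, leaving
\[
0 \in \nabla f(y^*) + \nabla h(y^*) + \xi^* + \partial g(y^*) \subseteq \nabla f(y^*) + \nabla h(y^*) + \partial p(y^*) + \partial g(y^*),
\]
where the inclusion uses $\xi^* \in \partial p(y^*)$. This is exactly the stationarity condition \eqref{eq:stationary} for $\Psi$, completing the proof. There is no real obstacle here; the main point is simply to unfold the fixed-point relation in the correct order, use $\alpha < L_{\mathrm{f}}^{-1}$ to make $\prox_{\alpha f}$ single-valued, and then exploit the variational characterization of $P_\Lambda$ provided by \cref{lemma:prox_formula}.
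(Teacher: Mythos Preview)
Your proof is correct and follows essentially the same logic as the paper's: both unpack the fixed-point relation, use $\alpha<L_{\mathrm f}^{-1}$ to get $y^*=\prox_{\alpha f}(z^*)$, and then read off the stationarity condition from the optimality of the $g$-prox step. The only cosmetic difference is that the paper writes the optimality condition of $\prox_{\gamma g}$ directly from \eqref{eq:Plambda} and combines it with $\frac{1}{\alpha}(z^*-y^*)=\nabla f(y^*)$, whereas you invoke \cref{lemma:prox_formula} and apply Fermat's rule to $\Phi_{\Lambda,\xi^*}$; these are equivalent formulations of the same argument.
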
 
\begin{proof}
	If $(y^*,z^*)\in \Fix (T_{\Lambda})$, then $y^*\in
	P_{\Lambda}(y^*,z^*)$, $z^* \in z^* + \tau (y^*-\prox_{\alpha f}(z^*))$.
	Hence, by noting that $\prox_{\alpha f}$ is single-valued since $\alpha<\Lf^{-1}$, we have 
	\begin{equation}
		y^* = \prox_{\alpha f}(z^*)
		\label{eq:ystar}
	\end{equation}
	and thus
	\begin{equation}
		 \frac{1}{\alpha}(z^*-y^*)= \nabla f(y^*).
		 \label{eq:from-z-update}
	 \end{equation} From \cref{eq:ystar} and the optimality condition
	 of \eqref{eq:Plambda}, we also have that there exists $\xi^*\in
	 \partial p(y^*)$ such that
	\begin{equation}
		 \frac{1}{\alpha} (2y^*-z^*-\alpha \nabla h(y^*))+\frac{1}{\beta}(y^*-\beta \xi^*) - \frac{1}{\gamma}y^* 
		\in \partial g (y^*).
		\label{eq:from-y-update}
	\end{equation}
	Adding \eqref{eq:from-z-update} and \eqref{eq:from-y-update} then leads to \cref{eq:stationary}.
\end{proof}

Thus, we define the \textit{residual function} for \cref{alg:main} as 
\begin{equation}
	R(y,z)\coloneqq \dist(0,(y,z)-T_{\Lambda}(y,z)),
	\label{eq:residual}
\end{equation}
and use it as a measure of stationarity from the view that $R(y,z)=0$ if and only if $(y,z)\in \Fix (T_{\Lambda})$.
This residual function reduces to that in 
\citep{LeeWright2019} when $f\equiv0$ and $p
\equiv 0$, and to that in \citep{LiuTakeda2022} when $f\equiv 0$. The next theorem shows that accumulation points of the $(y,z)$-sequence generated by \cref{alg:main} are fixed points of $T_{\Lambda}$.

\alert{
\begin{theorem}
\label{thm:accu_is_fixed}
	Suppose that the hypotheses of \cref{prop:bounded} hold, and let $(x^*,y^*,z^*)$ be an accumulation point of $\{(x^k,y^k,z^k)\}$. Then $x^*=y^*$ and $(y^*,z^*)\in \Fix (T_{\Lambda})$. 
\end{theorem}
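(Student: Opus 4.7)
The plan is to pass to the limit along a convergent subsequence and verify each component of the fixed-point equation $(y^*,z^*) \in T_\Lambda(y^*,z^*)$. By \cref{prop:bounded}, $\{(x^k,y^k,z^k)\}$ is bounded and consecutive iterates differ by amounts tending to zero, so along a subsequence $(x^{k_j},y^{k_j},z^{k_j}) \to (x^*,y^*,z^*)$ we also have $(x^{k_j-1},y^{k_j-1},z^{k_j-1}) \to (x^*,y^*,z^*)$ and $y^{k_j+1} \to y^*$.

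First I would show $x^* = y^* = \prox_{\alpha f}(z^*)$. Under all three stepsize regimes we have $\alpha < 1/(\Lf+\Lh) \leq 1/\Lf$ (by \cref{remark:case2_magnitude,remark:case2_magnitude_>1} or by explicit hypothesis in case (c)), so $f + \tfrac{1}{2\alpha}\|\cdot\|^2$ is strongly convex and $\prox_{\alpha f}$ is single-valued and Lipschitz. Passing to the limit in $x^{k_j} = \prox_{\alpha f}(z^{k_j})$ gives $x^* = \prox_{\alpha f}(z^*)$. Rearranging the $z$-update yields $y^{k_j+1} - x^{k_j} = (z^{k_j+1}-z^{k_j})/\tau \to 0$, so $y^* = x^*$. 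This also establishes the $z$-component of the fixed-point equation $z^* = z^* + \tau(y^* - \prox_{\alpha f}(z^*))$.

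Next I would extract a limiting subgradient $\xi^* \in \partial p(y^*)$. By \eqref{eq:partial_p}, $\rho_p y^{k_j} - \xi^{k_j} \in \partial\bigl(\tfrac{\rho_p}{2}\|\cdot\|^2 - p\bigr)(y^{k_j})$; since $\tfrac{\rho_p}{2}\|\cdot\|^2 - p$ is convex and finite-valued, its subdifferential is locally bounded, and boundedness of $\{y^{k_j}\}$ then yields boundedness of $\{\xi^{k_j}\}$. Passing to a further subsequence, $\xi^{k_j} \to \xi^*$, and continuity of $p$ together with \eqref{eq:generalsubdiff_consequence} delivers $\xi^* \in \partial p(y^*)$.

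To conclude $y^* \in P_\Lambda(y^*,z^*)$ via \eqref{eq:Plambda}, I would write $y^{k_j+1} \in \prox_{\gamma g}(u^{k_j})$ with $u^{k_j} := \tfrac{\gamma}{\alpha}(2x^{k_j}-z^{k_j}-\alpha\nabla h(x^{k_j})) + \tfrac{\gamma}{\beta}(y^{k_j} - \beta\xi^{k_j})$, so that $u^{k_j} \to u^*$ by continuity of $\nabla h$, with $u^*$ obtained by replacing iterates by their limits. The main obstacle lies here: because $g$ is merely proper and closed (possibly nonconvex), $\prox_{\gamma g}$ is only outer semicontinuous, so I cannot pass limits through it directly. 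Instead I would exploit the defining prox inequality $g(y^{k_j+1}) + \tfrac{1}{2\gamma}\|y^{k_j+1}-u^{k_j}\|^2 \leq g(w) + \tfrac{1}{2\gamma}\|w-u^{k_j}\|^2$. Taking $w=y^*$ and using $y^{k_j+1}\to y^*$ and $u^{k_j}\to u^*$ gives $\limsup_j g(y^{k_j+1}) \leq g(y^*)$, which combined with lower semicontinuity of $g$ forces $g(y^{k_j+1}) \to g(y^*)$. Passing to the limit in the prox inequality for arbitrary $w$ then yields $y^* \in \prox_{\gamma g}(u^*)$, so $y^* \in P_\Lambda(y^*,z^*)$ and $(y^*,z^*) \in \Fix(T_\Lambda)$.
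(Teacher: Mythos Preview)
Your proof is correct and follows the same overall skeleton as the paper's (first $x^*=y^*=\prox_{\alpha f}(z^*)$, then extract $\xi^*\in\partial p(y^*)$, then $y^*\in\prox_{\gamma g}(u^*)$). The only genuine difference is in the last step. The paper argues via prox-boundedness: it shows $\liminf_{\|x\|\to\infty} g(x)/\|x\|^2 \geq -(\Lf+\Lh+\rhop)/2$, verifies that $\gamma < 1/(\Lf+\Lh+\rhop)$, and then invokes \cite[Theorem~1.25]{RW98} to conclude that cluster points of $\{\prox_{\gamma g}(u^{k_j})\}$ lie in $\prox_{\gamma g}(u^*)$. Your route is more elementary and self-contained: since you already know $y^{k_j+1}\to y^*$, plugging $w=y^*$ into the defining prox inequality and using lower semicontinuity of $g$ gives $g(y^{k_j+1})\to g(y^*)$, after which the limit passes through for arbitrary $w$. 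This avoids the Rockafellar--Wets machinery entirely; the paper's approach, in contrast, would still work even without knowing in advance that $\{y^{k_j+1}\}$ converges. One minor point to tidy up: before taking $w=y^*$ you should note $y^*\in\dom(g)$, which follows by first choosing any $w_0\in\dom(g)$ in the prox inequality to bound $g(y^{k_j+1})$ from above and then applying lower semicontinuity.
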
}
\begin{proof}
	Let $\{(x^{k_j},y^{k_j},z^{k_j})\}$ be such that
	$(x^{k_j},y^{k_j},z^{k_j}) \to (x^*,y^*,z^*)$. From the proof of
	\cref{prop:bounded}, we have that $\norm{y^k-x^{k-1}}\to 0$ and
	$\norm{y^{k}-y^{k-1}}\to 0$. It then follows that $y^{k_j}\to x^*$ by the triangle inequality, and therefore $x^*=y^*$. To show that $(y^*,z^*)\in \Fix (T_{\Lambda})$,  we first note that since $x^*=y^*$ and $z^k = x^k+\alpha \nabla f(x^k)$ by \eqref{eq:prox_optimality}, it follows that $z^*=y^*+\alpha \nabla f(y^*)$. Since $\alpha <\frac{1}{\Lf+\Lh}\leq \frac{1}{\Lf}$, then $f+\frac{1}{2\alpha}\|\cdot\|^2$ is strongly convex by \cref{remark:Lsmooth_implies_weaklyconvex} and therefore $y^*\in \prox_{\alpha f}(z^*)$.
    Thus, $z^* = z^* + \tau (y^* -\prox_{\alpha f}(z^*))$. Hence, to show that $(y^*,z^*)\in \Fix (T_{\Lambda})$, it remains to prove that $y^*\in P_{\Lambda}(y^*,z^*)$. To this end,  note that by the convexity and continuity of $\frac{\rhop}{2}\|\cdot\|^2 - p$ on $\dom(g)$
 (by \cref{assume:problem}(d)) and the formula \eqref{eq:partial_p}, we have from \cite[Theorem
 3.16]{Beck17} and \eqref{eq:partial_p} that $\{ \xi^{k_j} \}$ is bounded. Using again the continuity of $p$ on $\dom(g)$ together with \eqref{eq:generalsubdiff_consequence} and the fact that $\{ y^k\}\subseteq \dom (g)$, we see that accumulation points of $\{\xi^{k_j}\}$ belong to $\partial p(y^*)$. Without loss of generality, we may assume that $\xi^{k_j}\to \xi^*$ where $\xi^* \in \partial p(y^*) $.

Meanwhile,  we have from  \eqref{eq:y-step} that  
\begin{equation}
    y^{k_j+1} \in \prox_{\gamma g}( u^{k_j}) \quad \text{where}~u^{k_j} \coloneqq  \frac{\gamma}{\alpha} (2x^{k_j}-z^{k_j} - \alpha \nabla h(x^{k_j}) + \frac{\gamma}{\beta}(y^{k_j} - \beta \xi^{k_j}).
    \label{eq:ykj+1}
\end{equation}
Note that $y^{k_j+1}\to y^*$ and $u^{k_j}\to u^* \coloneqq \frac{\gamma}{\alpha}(2y^*-z^* - \alpha \nabla h(y^*)) + \frac{\gamma}{\beta}(y^* - \beta \xi^*)$, where we have used the fact that $\xi^{k_j}\to \xi^*$ and $x^*=y^*$. To prove the claim that $y^*\in P_{\Lambda}(y^*,z^*)$, we only need to show that $y^*\in \prox_{\gamma g}(u^*)$, noting that $y^*\in \prox_{\alpha f}(z^*)$   .

From \cref{lemma:properties_Lsmooth}(a) and \eqref{eq:p_descentlemma}, we have that for all $x\in \Re^n$, 
\begin{align*}
    \Psi^* & \leq f(x) + h(x) + p(x) + g(x) \\
    & \leq (f+h+p)(\bar{x}) + \lla \nabla (f+h)(\bar{x}) + \bar{\xi} , x-\bar{x} \rla +  \frac{\Lf + \Lh +\rhop}{2}\norm{x-\bar{x}}^2 + g(x), 
\end{align*}
where $\bar{x}$ is a fixed but arbitrary element of $\dom (g)$ and $\bar{\xi}\in \partial p(\bar{x})$. We may write the above inequality as
\begin{equation}
    0 \leq a + \lla v, x\rla + \frac{\Lf+\Lh+\rhop}{2}\norm{x}^2 + g(x) 
    \label{eq:prox_bounded}
\end{equation}
for some $a\in \Re$ and $v\in \Re^n$. It then follows that $\liminf_{\norm{x}\to\infty} \frac{g(x)}{\norm{x}^2} \geq -\frac{\Lf+\Lh+\rhop}{2}$ by dividing both sides of \eqref{eq:prox_bounded} by $\norm{x}^2$ and using Cauchy-Schwarz inequality. By \cite[Exercise 1.24]{RW98}, 
\jhsolved{Will probably need to add the definition of this later, if you agree with my proof}\cpsolved{Tried to workaround the term. How about the current description?}\jhsolved{It looks good! Thanks for this!} the fact that $\frac{1}{\gamma} > \Lf+\Lh + \frac{1}{\beta}\geq \Lf+\Lh+\rhop$ (that is, $\gamma <\frac{1}{\Lf+\Lh+\rhop}$), and \cite[Theorem 1.25]{RW98}, we know that $\{y^{k_j+1}\}$ given in \eqref{eq:ykj+1} is bounded,\jhsolved{This theorem, together with the proof of prox-boundedness, actually implies that your question before about the boundedness of $\prox_{\gamma g}(\cdot)$ indeed holds.}\cpsolved{I see it is the first part of the thm that not used here. Thanks!} with accumulation points lying in $\prox_{\gamma g}(u^*)$. Since $y^{k_j+1} \to y^*$, it follows that $y^* \in \prox_{\gamma g}(u^*)$, as desired.
\end{proof}

\alert{
\begin{remark}
	\label{rem:stationary}
 It is not difficult to verify that when $\alpha < \Lf^{-1}$, $(y^*,z^*)\in \Fix (T_{\Lambda})$ is equivalent to the conditions
 \begin{equation}
     \begin{cases}
         & y^* \in \prox_{\gamma g}\left( \frac{\gamma}{\alpha}\left(
		 y^*-\alpha \nabla f(y^*) - \alpha \nabla h(y^*)\right) +
		 \frac{\gamma}{\beta}T_{\beta p}(y^*)\right), \\
         & z^* = y^*+\alpha \nabla f(y^*).
     \end{cases}   
     \label{eq:stopcriterion_basis}
 \end{equation}
Meanwhile, in certain applications \cite{LuZ19a,LuZS19a}, it is of
interest to consider cases in which $p$ takes the form  
\begin{equation}
p(x) = - \max_{y\in \mathcal{Y}} \phi(x,y),
\end{equation}  
where $\phi(\cdot,y)$ is continuously differentiable and convex for
all $y \in \mathcal{Y}$, and $\mathcal{Y}$ is a compact set. This
satisfies \cref{assume:problem}(d) with $\rhop = 0$. If $p$ takes this
form \cpmodifyok{and $f$, $h$, $g$ are convex and satisfy}
\cref{assume:problem}(a)-(c), then from \cite[Proposition~1]{LuZS19a}
and \eqref{eq:stopcriterion_basis}, it follows that any point
$(y^*,z^*) \in \Fix (T_{\Lambda})$ ensures that $y^*$ corresponds to a
D-stationary point \cpmodifyok{of \cref{eq:problem}}, which\cpmodifyok{, as
	discussed in \cite{PangRA17a},} is generally
a stronger notion than the \cpmodifyok{stationarity we used in
	\cref{eq:stationary}}.
Together with \cref{thm:accu_is_fixed}, this shows that under
\cpmodifyok{these} conditions on $f, g, h$, and $p$, our proposed algorithm is globally subsequentially convergent to a point $(y^*,z^*)$ where $y^*$ is a D-stationary point of \eqref{eq:problem}.  
\end{remark}
}


The following theorem provides the $O(1/T)$ iteration complexity of \cref{alg:main}.
\alert{
\begin{theorem}
\label{thm:complexity}
	Suppose that the hypotheses of \cref{prop:bounded} hold. There
	exists $\omega>0$ such that $		\min _{k=1,2,\dots, T }
	R(y^k,z^k)^2 \leq \frac{\omega}{T}, $ for all $T>1$.\cpsolved{Changed
	the counter to $T$, which seems to be more widely used.}
\end{theorem}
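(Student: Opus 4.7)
The plan is to combine three ingredients: (i) the sufficient decrease inequality \eqref{eq:suff_decrease} established in \cref{thm:Vk_nonincreasing_0to1,thm:Vk_nonincreasing_1to2,thm:Vk_nonincreasing_>2}, (ii) a lower bound on $V_k$, which is available since $V_k$ majorizes $\Psi(y^{k+1})$ plus nonnegative terms (as shown in the proof of \cref{prop:bounded}) and $\Psi$ is bounded below by closedness and level-boundedness, and (iii) a pointwise bound of $R(y^k,z^k)^2$ in terms of consecutive iterate increments.

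For ingredient (iii), since $(y^{k+1},z^{k+1})\in T_{\Lambda}(y^k,z^k)$ by construction of \cref{alg:main}, the definition \eqref{eq:residual} immediately yields $R(y^k,z^k)^2 \leq \|y^{k+1}-y^k\|^2+\|z^{k+1}-z^k\|^2$. The $z$-increment is already controlled via \eqref{eq:z^k_change} and $L_f$-smoothness of $f$, giving $\|z^{k+1}-z^k\|\leq (1+\alpha L_f)\|x^{k+1}-x^k\|$ as in \eqref{eq:prox_lowerbound}. For the $y$-increment, I would apply \eqref{eq:z-step} at both indices $k$ and $k+1$ and substitute \eqref{eq:z^k_change} to express $y^{k+1}-y^k$ as a linear combination of $x^{k+1}-x^k$, $x^k-x^{k-1}$ and the corresponding gradient differences of $f$; $L_f$-smoothness then yields $\|y^{k+1}-y^k\|^2 \leq C_1(\|x^{k+1}-x^k\|^2+\|x^k-x^{k-1}\|^2)$ for a constant $C_1>0$ depending only on $\tau,\alpha,L_f$. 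Consequently, $R(y^k,z^k)^2 \leq C_2(\|x^{k+1}-x^k\|^2+\|x^k-x^{k-1}\|^2)$ for some $C_2>0$.

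Combining this with \eqref{eq:suff_decrease}, where the $\|y^k-y^{k-1}\|^2$-term on the right-hand side is nonnegative since $\beta\leq \rho_p^{-1}$, telescoping from $k=1$ to $k=T+1$ gives $\sum_{k=1}^{T+1}\|x^k-x^{k-1}\|^2 \leq -\tfrac{2\tau\alpha}{c(\alpha)}(V_0-\inf_k V_k)$, whose right-hand side is finite by (ii) and the strict negativity of $c(\alpha)$ established in \cref{thm:Vk_nonincreasing_0to1,thm:Vk_nonincreasing_1to2,thm:Vk_nonincreasing_>2} under the hypotheses of \cref{prop:bounded}. Summing $R(y^k,z^k)^2$ over $k=1,\ldots,T$ then yields a bound $\sum_{k=1}^T R(y^k,z^k)^2\leq \omega$ for some $\omega>0$ independent of $T$, and the claim follows from the elementary fact that the minimum is at most the average.

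The main technical obstacle is the careful algebraic derivation of the bound $\|y^{k+1}-y^k\|^2 \leq C_1(\|x^{k+1}-x^k\|^2+\|x^k-x^{k-1}\|^2)$; this detour through $x$-increments is necessary because the hypotheses of \cref{prop:bounded} allow $\beta=\rho_p^{-1}$, in which case the $y$-term in \eqref{eq:suff_decrease} vanishes and cannot be used to bound $\|y^k-y^{k-1}\|^2$ directly. The edge case $L_f+L_h=0$ (where $\alpha=\infty$ and $x,z$ play no role) admits a simpler argument using the sufficient decrease in the remark following \cref{prop:bounded}, so that $R(y^k,z^k)^2$ is controlled directly by $\|y^{k+1}-y^k\|^2$ and a strictly positive coefficient $(1-\beta\rho_p)/(2\beta)$.
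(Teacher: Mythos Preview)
Your proposal is correct and follows essentially the same approach as the paper: both use \eqref{eq:z-step} to write $y^{k+1}-y^k = x^k-x^{k-1} + \tau^{-1}(z^{k+1}-z^k) - \tau^{-1}(z^k-z^{k-1})$, bound the $z$-increments via \eqref{eq:prox_lowerbound}, telescope \eqref{eq:suff_decrease} to control $\sum_k \|x^k-x^{k-1}\|^2$ by $V_0-\Psi^*$, and then use $R(y^k,z^k)^2 \leq \|y^{k+1}-y^k\|^2+\|z^{k+1}-z^k\|^2$ together with the min-at-most-average argument. Your explicit treatment of the degenerate case $\Lf+\Lh=0$ is extraneous here since the hypotheses of \cref{prop:bounded} already require $\Lf+\Lh>0$.
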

\begin{proof}
From \eqref{eq:suff_decrease}, we have for any $N\geq 0$ that
      \begin{equation}
        \sum_{k=1}^{N+1} -\frac{c(\alpha)}{2\tau \alpha}\norm{x^{k}-x^{k-1}}^2 \leq  V_0-\Psi^*,
        \label{eq:fromsuffdecrease_noweakconvexity}
    \end{equation}
    where  $\Psi^* \coloneqq \min_{x\in \Re^n} \Psi(x)$.
     On the other hand, from \cref{eq:z-step}, we get $y^{k+1} - y^k = x^{k} - x^{k-1} +
	 \tau^{-1} (z^{k+1} - z^k) - \tau^{-1} (z^k - z^{k-1}).$
    \ifdefined\submit 
	Using this fact, the triangle inequality,  the estimate
	$(a+b+c)^2 \leq 3(a^2+b^2+c^2)$ (for $a,b,c\in\Re$), that
	$c(\alpha) < 0$ by the choice of $\alpha$ in \cref{prop:bounded},
and \eqref{eq:fromsuffdecrease_noweakconvexity}, we  get the desired result. 
 \else 
  It follows from the triangle inequality and the estimate $(a+b+c)^2 \leq 3(a^2+b^2+c^2)$ (for $a,b,c\in\Re$) that
  \begin{equation*}
    \norm{y^{k+1} - y^k}^2 \leq 3\norm{x^{k} - x^{k-1}}^2 +
	 3\tau^{-2} \norm{z^{k+1} - z^k}^2 + 3\tau^{-2} \norm{z^k - z^{k-1}}^2.
  \end{equation*}
  Then 
    \begin{align*}
    \sum_{k=1}^{N} \norm{y^{k+1} - y^k}^2 & \leq 3\sum_{k=1}^{N}  \norm{x^{k} - x^{k-1}}^2 +
	 3\tau^{-2} \sum_{k=1}^{N}  \norm{z^{k+1} - z^k}^2 + 3\tau^{-2} \sum_{k=1}^{N}  \norm{z^k - z^{k-1}}^2 \\
  & \overset{\eqref{eq:prox_lowerbound}}{\leq } 3\sum_{k=1}^{N}  \norm{x^{k} - x^{k-1}}^2 +
	  3\tau^{-2} (1+\alpha \Lf)^2 \sum_{k=1}^{N} \norm{x^{k+1}-x^{k}}^2 \\
   &\quad +   3\tau^{-2} (1+\alpha \Lf)^2 \sum_{k=1}^{N} \norm{x^{k}-x^{k-1}}^2 \\
   & = \left( 3+3\tau^{-2}(1+\alpha\Lf)^2\right) \sum_{k=1}^{N}  \norm{x^{k} - x^{k-1}}^2 +3\tau^{-2}(1+\alpha\Lf)^2  \sum_{k=1}^{N}\norm{x^{k+1}-x^{k}}^2 \\
   & \overset{\eqref{eq:fromsuffdecrease_noweakconvexity}}{\leq}  \left( 3+6\tau^{-2}(1+\alpha\Lf)^2\right) \left( -\frac{2\tau\alpha (V_0-\Psi^*)}{c(\alpha)}\right).
  \end{align*}
  Combining this with \eqref{eq:fromsuffdecrease_noweakconvexity} yields
  \begin{equation*}
  \label{eq:xy}
    \sum_{k=1}^{N} \norm{x^{k+1} - x^k}^2 + \sum_{k=1}^{N} \norm{y^{k+1} - y^k}^2  \leq \left( 4+6\tau^{-2}(1+\alpha\Lf)^2\right) \left( -\frac{2\tau\alpha (V_0-\Psi^*)}{c(\alpha)}\right) ,
  \end{equation*}
  from where the result follows.
  \fi 
\end{proof}
}

While finalizing this paper, we became aware of a recent preprint by
\citet{Dao2024} that proposed to solve \cref{eq:problem} with $\rhop=0$
using an algorithm similar to ours.\footnote{Specifically, setting $\theta=1$ in \cite[Algorithm 1]{Dao2024} coincides with our algorithm for the case of $\rhop=0$.}
The main advantage of our analysis for this coincided special
case is that our upper bounds for
stepsizes are significantly larger than those in
\citep{Dao2024}; see
\cref{fig:compare_stepsize2,fig:compare_stepsize3} for geometric
illustrations.
\alert{Numerical results in \cref{sec:stepsize} indicate that such a larger
$\alpha$ is meaningful as it provides faster practical convergence.}
Meanwhile, subsequential convergence to stationary points is proven in both works, but ours allows for a wider range of stepsizes. \citep{Dao2024} also proved global convergence of the full sequence under a Kurdyka-\L ojasiewicz hypothesis, wherein they used the techniques by \citep{liupongtakeda2019}.
We note that a similar strategy can be employed to establish the same global convergence property for our algorithm with a wider range of stepsizes.

\ifdefined\submit
  \def\figscaledao{0.35}
\else
  \def\figscaledao{.55}
\fi

\begin{figure}[tb!]
	\centering
	\begin{tabular}{@{}cc@{}}
        \includegraphics[scale=\figscaledao]{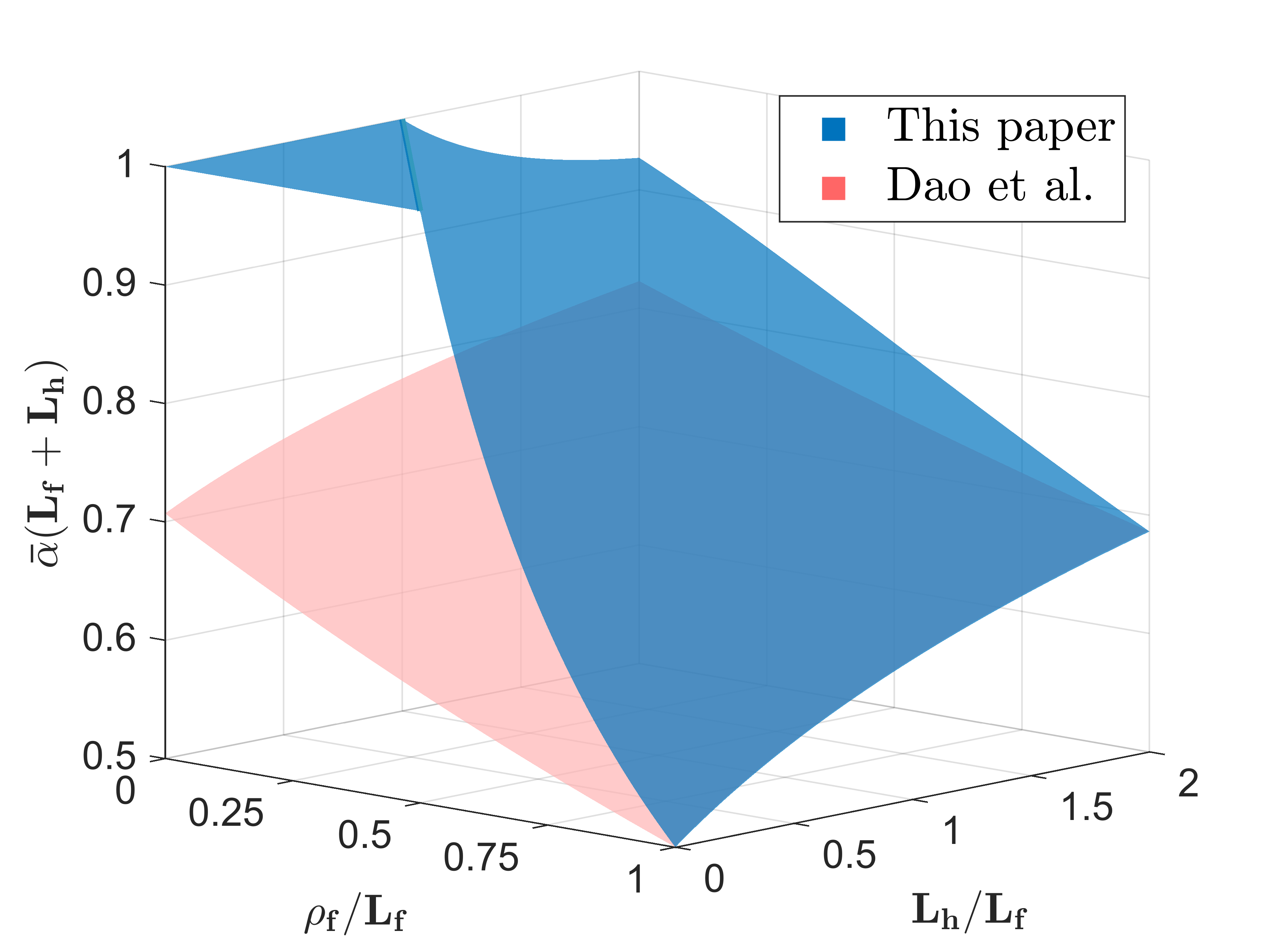}&
		\includegraphics[scale=\figscaledao]{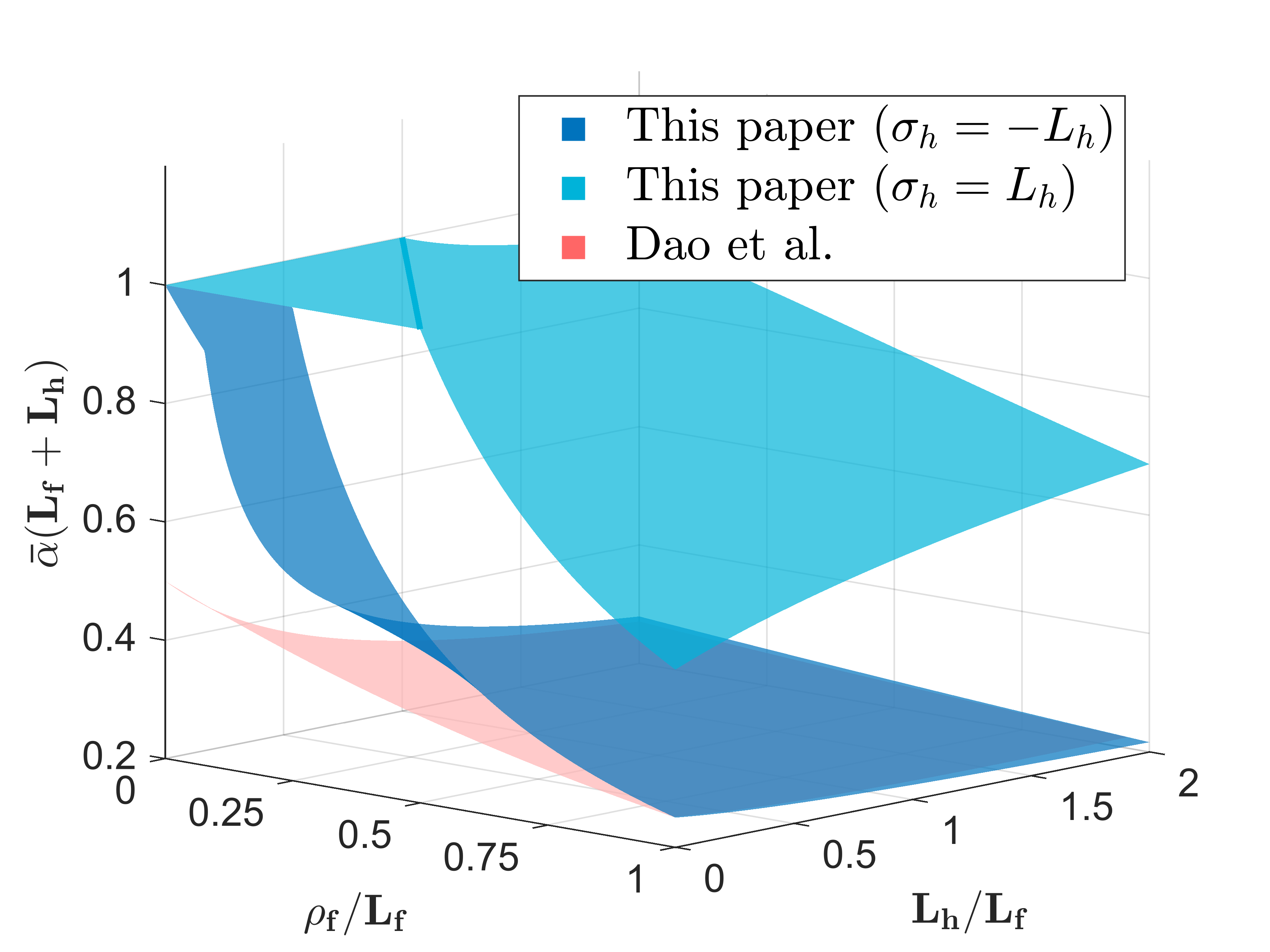}\\
		(a) $\tau=1$ & (b) $\tau=1.5$ and $\sigmah = \pm\Lh$
	\end{tabular}
	\caption{{\small Comparison of stepsize upper bounds (denoted by
		$\bar{\alpha}$) computed in the present work and in
		\citep{Dao2024} for $\tau = 1$ and $\tau=1.5$. We can see that
		the stepsize upper bounds in this work are always larger
		than those in \citep{Dao2024}. Figure (a)
		demonstrates that the upper bound we have derived for the
		stepsize of the Davis-Yin splitting algorithm is
		$\frac{1}{\Lf+\Lh}$ provided that
		$\frac{\Lh}{\Lf}+\frac{2\rhof}{\Lf}\leq 1$ (see
		\cref{thm:Vk_nonincreasing_0to1}). Figure (b) demonstrates
		\cref{thm:Vk_nonincreasing_1to2} with $\tau=1.5$ using a rough
		estimate $-\sigmah = \Lh$, but this can be further
		improved when $-\sigmah < \Lh$. For instance, the case of
		$h(x) = \lambda \|x\|_2^2$ $(\sigmah = \Lh)$ is also shown in
		figure (b) (see \cref{remark:tau>1_samebounds}).}}
	\label{fig:compare_stepsize2}
\end{figure}

\begin{figure}[tb!]
    \centering
     \includegraphics[scale=\figscaledao]{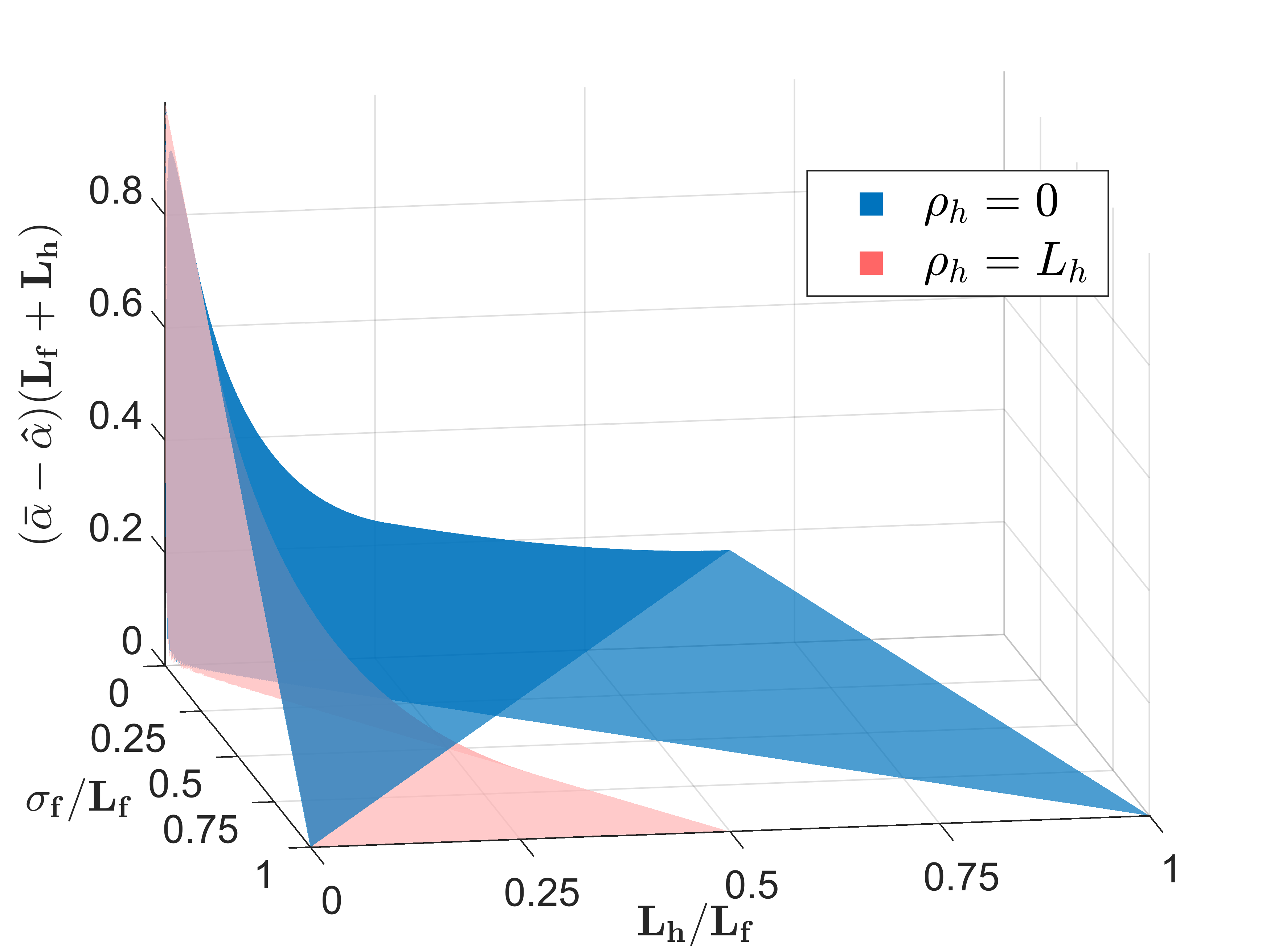}
    \caption{{\small Gaps between the stepsize upper bounds in the present work and in \citep{Dao2024}, denoted respectively by $\bar{\alpha}$ and $\hat{\alpha}$, when $\tau = 2$. In this case, \citep{Dao2024} always requires that $\sigmaf>2\Lh$. Using the rough estimate $\rhoh=\Lh$ in \cref{thm:Vk_nonincreasing_1to2}, the red graph shows that the difference $\bar{\alpha}-\hat{\alpha}$ between our and their stepsizes is always nonnegative. For a convex $h$ (namely when $\rhoh = 0$), the upper bound $\hat{\alpha}$ provided in \citep{Dao2024} does not change. As depicted in the blue graph, ours, on the other hand, provides a larger stepsize upper bound for the convex case, resulting in a bigger gap between $\bar{\alpha}$ and $\hat{\alpha}$. Ours also has a wider region in which $\tau=2$ is applicable, since our condition on the parameters (namely $\sigmaf > \Lh +\rhoh$) becomes weaker than requiring $
    \sigmaf > 2\Lh$.}}
    \label{fig:compare_stepsize3}
\end{figure}

\section{Numerical experiments}
\label{sec:experiments}
In this section, we present experiments to corroborate the
theoretical findings in the previous section.
All experiments were conducted in MATLAB. 
For our method, we experiment with various values of $\tau$.
\alert{For each $\tau$, we use the corresponding $\bar{\alpha}$ in
\cref{thm:Vk_nonincreasing_0to1,thm:Vk_nonincreasing_1to2,thm:Vk_nonincreasing_>2}, and we calculate $\alpha$ by multiplying $\bar{\alpha}$ by $0.99$ to ensure global subsequential convergence as suggested
by \cref{thm:accu_is_fixed}.
We also set $\beta = 1/\rhop$ according to our theoretical results when
the $p$ term is present, and set it to \cpmodifyok{$\infty$} otherwise
\cpmodifyok{as described in \cref{subsec:alg}}.
}
\cpmodifyok{The value of $\gamma$ is then decided by $\alpha$ and
	$\beta$ through Step~0 of \cref{alg:main}.}

\alert{For a fair comparison of different methods that can fit in the framework of
	\cref{alg:main}, \cpmodifyok{instead of \cref{eq:residual},} we
	report their \cpmodifyok{respective} required number of iterations and
running time to reach \cpmodifyok{an approximate first-order optimality measure
independent of the algorithm}. \cpmodifyok{In particular,} in view of \eqref{eq:stopcriterion_basis},
\cpmodifyok{for any algorithm compared, regardless
of their specific reformulation/transformation, we stop it 
when its main iterate $y^k$ satisfies}
    \begin{equation}
         \dist \left( 0, y^k - \prox_{\hat{\gamma} g}\left(
		 \frac{\hat{\gamma}}{\hat{\alpha}}\left( y^k-\hat{\alpha}
		 \nabla f(y^k) - \hat{\alpha}\nabla h(y^k)\right) +
		 \frac{\hat{\gamma}}{\hat{\beta}}T_{\hat{\beta} p}(y^k)\right)\right)\leq \epsilon
		 \label{eq:stationary_measure}
    \end{equation}
	\cpmodifyok{ for some $\epsilon > 0$, where the choice of $f,g,h$,
		and $p$ are those used in our algorithm, and
		$(\hat{\alpha},\hat{\beta},\hat \gamma) =
	(\alpha,\beta,\gamma)$ are calculated following the above
procedure for our algorithm with $\tau = 1$.
From \cref{rem:stationary}, it is clear that at any stationary point
$y^*$ of \cref{eq:problem}, the left-hand side of
\cref{eq:stationary_measure} vanishes, and thus it is a valid measure
for the approximate first-order optimality measure.
In all experiments in this section, we set $\epsilon = 10^{-6}$.}}

\subsection{Nonnegative and low-rank matrix completion}
The first experiment we consider is nonnegative matrix
completion/factorization \citep{lee1999learning}, whose goal is to
recover missing entries of a partially observed matrix using
nonnegative entries, and we impose the additional structure requirement
that the recovered matrix should be low-rank.
Given a data matrix $M \in \Re^{m\times n}$, we let $\Omega$ denote the
entries $(i,j)$ \alert{observed} and $P_{\Omega}$ denote the
associated projection operation such that
$P_{\Omega}(X)_{i,j}$ outputs $X_{i,j}$ if $(i,j)\in \Omega$ and $0$ otherwise.
We consider \cref{eq:problem} with
$p(X) \equiv 0$ and
\begin{equation}
	f(X) = \frac{\lambda_1}{2} \min_{Y \in \Re^{m\times
	n}_+}\|X-Y\|^2_F, \quad g(X) = \lambda_2
	\|X\|_*, \quad h(X) = \frac12 \|P_{\Omega}(X - M)\|^2_F,
	\label{eq:nmf}
\end{equation}
where $\lambda_1,\lambda_2 \geq 0$ are weights for the respective terms,
$\|\cdot\|_*$ is the nuclear norm, $\|\cdot\|_F$ is the Frobenius
norm, and $\Re^{m \times n}_+$ is the nonnegative orthant of $\Re^{m
\times n}$.
\alert{We can easily see that \cref{eq:nmf} satisfies
our assumptions with $L_f = \lambda_1,
L_h = 1$, and $\rho_p = \rho_f = \rho_h = \sigma_h = 0$.}

We follow \cite{TohYun10a} to generate $M \in \Re^{m\times n}$ with a specific
rank $r$ as the product of an $m\times r$ and an $r\times n$ matrix
whose entries are all identically and independently distributed as
standard Gaussian, and then randomly select $s$ entries in uniform
random to form $\Omega$.
We in particular use $m=n$ and consider $(n,s) \in \{(100,1000),
(300,10000)\}$, $r \in \{10,30\}$, $\lambda_1 = 10$ and $\lambda_2 = 5$.
We set $\beta = \infty$ for all applicable methods due to the absence
of the $p$ term.
All algorithms are run with a cap of $30,000$ iterations.

\alert{\subsubsection{Comparison of different bounds for $\alpha$}
\label{sec:stepsize}
We first use our method with $\tau = 1$ to recover the DYS algorithm,
and compare our obtained bound for $\bar \alpha$ with those in
\cite{BianZhang21,Dao2024} for the same setting.
We use the same factor of $0.99$ for all the obtained upper bounds to
ensure that strict inequality holds, which is essential to guarantee theoretical convergence.
The result in \cref{tbl:nmfDYS} shows that a larger $\alpha$ leads to
faster convergence, and therefore our new upper bound for $\alpha$
provides not only theoretical but also practical improvements.
}
\begin{table}[tb]
	\alert{
	\centering
    	\caption{Comparison between different bounds for $\alpha$ in
			DYS using nonnegative and low-rank
	matrix completion. $*$ denotes that the maximum iteration of
$30000$ is reached.
The respective values of $\bar \alpha$ in this experiment are:
\cite{BianZhang21}: $0.0212$, \cite{Dao2024}: $0.0645$, ours: $0.09$.
}
	\label{tbl:nmfDYS}
	\begin{tabular}{@{}l|rr|rr|rr|rr@{}}
		\multirow{2}{*}{Bound} & \multicolumn{2}{c|}{$n=100,r=10$} &
		\multicolumn{2}{c|}{$n=100,r=30$} &
		\multicolumn{2}{c|}{$n=300,r=10$} &
	\multicolumn{2}{c}{$n=300,r=30$}\\
	\cmidrule{2-9}
& iter & time& iter & time& iter & time& iter & time\\
	\midrule
\cite{BianZhang21} & 14371 & 17.37 & 27335 & 32.64 & 14937 & 154.11 & \conditionalstar{*30000} & \conditionalstar{*322.91} \\
\cite{Dao2024} & 9087 & 11.89 & 17283 & 21.70 & 9445 & 100.31 & 21929 & 229.87 \\
Ours  & 6516 & 8.71 & 12394 & 16.47 & 6773 & 73.02 & 15725 & 167.23
	\end{tabular}
}
\end{table}

\subsubsection{Comparison with other methods}
We next compare our method with different $\tau$'s.
Since $f$ is not strongly convex, our theoretical
results indicate that we should use $\tau < 2$.
We thus run our method
with $\tau \in
\{1,1.1,\cdots,1.9\}.\footnote{We have
	also experimented with $\tau\in \{0.5,0.6,0.7,0.8,0.9\}$, but those values
give worse performance, with smaller $\tau$ leading to slower convergence.
We therefore omit these results in our experiments.}$

\alert{We compare our method with the proximal gradient method (PG) by treating $f+h$
together as the smooth term, and we set the stepsize \cpmodifyok{to be}
$0.99/(\Lf+\Lh)$ \cpmodifyok{following standard practice}}.
In addition, as noticed in the previous part, for this problem, our method with $\tau = 1$ recovers the
Davis-Yin splitting (DYS).
According to the result of the previous experiment, for DYS we use the
new upper bound for $\alpha$ obtained in this work.
We can clearly see from the results in \Cref{tbl:nmf} that our
method with $\tau \in (1,1.9)$ constantly outperforms DYS (with our improved upper bound for
$\alpha$) and PG, and DYS and PG are performing very similarly in this
experiment.

\begin{table}
	\centering
    	\caption{Comparison between methods for nonnegative and low-rank
	matrix completion. $*$ denotes that the maximum iteration of $30000$ is reached.
$\tau = t$ denotes our method with the specified $\tau$.}
	\label{tbl:nmf}
	\begin{tabular}{@{}l|rr|rr|rr|rr@{}}
		\multirow{2}{*}{Method} & \multicolumn{2}{c|}{$n=100,r=10$} &
		\multicolumn{2}{c|}{$n=100,r=30$} &
		\multicolumn{2}{c|}{$n=300,r=10$} &
	\multicolumn{2}{c}{$n=300,r=30$}\\
	\cmidrule{2-9}
& iter & time& iter & time& iter & time& iter & time\\
	\midrule
	PG	 & 6516 & 8.48 & 12393 & 16.45 & 6773 & 72.24 & 15725 & 168.03 \\
DYS & 6516 & 8.71 & 12394 & 16.47 & 6773 & 73.02 & 15725 & 167.23 \\
$\tau = 1.1$ & 5963 & 8.03 & 11342 & 14.61 & 6198 & 65.64 & 14391 & 152.51 \\
$\tau = 1.2$ & 5505 & 7.33 & 10472 & 14.10 & 5723 & 60.82 & 13287 & 142.07 \\
$\tau = 1.3$ & 5122 & 6.47 & 9742 & 13.16 & 5324 & 56.25 & 12362 & 136.20 \\
$\tau = 1.4$ & 4796 & 6.07 & 9124 & 12.02 & 4985 & 52.89 & 11577 & 123.30 \\
$\tau = 1.5$ & 4518 & 5.76 & 8595 & 11.35 & 4697 & 49.69 & 10906 & 116.72 \\
$\tau = 1.6$ & 4279 & 5.47 & 8141 & 10.74 & 4448 & 47.04 & 10330 & 109.19 \\
$\tau = 1.7$ & \bftab 4074 &\bftab  5.28 &\bftab  7750 &\bftab  10.22
&\bftab  4235 &\bftab  44.96 &\bftab  9834 &\bftab  104.33 \\
$\tau = 1.8$ & 5087 & 6.57 & 9678 & 12.78 & 5288 & 56.09 & 12280 & 131.24 \\
$\tau = 1.9$ & 8270 & 10.70 & 15732 & 20.78 & 8595 & 91.02 & 19961 & 213.87 \\
	\end{tabular}
\end{table}

\subsection{Cardinality-constrained problems}
Our next experiment considers a \alert{least-squares} problem with a penalized form of the cardinality
constraint $\|x\|_0 \leq k$ for some given positive integer $k$.
As argued in \citep{GotohTakeadaTono18a}, this constraint is equivalent to
	$\|x\|_1 - \|x\|_{(k)} = 0$,	
where $\|x\|_{(k)}$ is the Ky Fan $k$-norm that computes the sum of
the top $k$ largest elements of the element-wise absolute value of $x$,
and we follow \cite{GotohTakeadaTono18a} to take $\|x\|_1 -
\|x\|_{(k)}$ as a penalty in the objective function instead of
enforcing the hard constraint.
We also follow a common convention in machine learning to introduce a Tikhonov regularization of the form $\|x\|_2^2$ to improve the problem condition.
This leads to the following setting for \cref{eq:problem}.
\begin{equation}
	f(x) = \frac12 \|Ax - b\|^2_2,
	\quad g(x) = \lambda_2
	\|x\|_1,\quad p(x) = -\lambda_2 \|x\|_{(k)}, \quad
	h(x) = \frac{\lambda_1}{2}\|x\|_2^2.
	\label{eq:kyfan}
\end{equation}
\alert{For \cref{eq:kyfan}, we have the following parameter estimates:
$L_f = \|A^{\top} A\|_2, L_h = \lambda_1, \rho_f = \rho_h = \rho_p =
0$, and $\sigma_f$ equals to the smallest eigenvalue of $A^{\top} A$.}

Given the existence of the $p$ term in this experiment, PG is not
applicable, but by again treating $(f+h)$ as the smooth term, we can
use the proximal DC algorithm described in \cref{eq:proxdc}.
In this experiment, we  set $\lambda_1 =
5$, $\lambda_2 = 10$, and $k = \lfloor n/10 \rfloor$.
As \cref{eq:delta2} does not hold for any $\tau \geq
2$, for our method we use the same range of $\tau$ as the previous experiment.
All algorithms are run with a cap of $100,000$ iterations.
Results on publicly available real-world data sets are shown in
\cref{tbl:lskyfan}.\footnote{Downloaded from
	\url{https://www.csie.ntu.edu.tw/~cjlin/libsvmtools/datasets/}.}

In terms of the number of iterations, we can see that our method with
$\tau > 1$ is more efficient than PDC, and before reaching $\tau = 2$,
the number of iterations of
our method is roughly proportional to $1/ \tau$.
For those datasets on which $\tau = 2$ is feasible, we see that the
per-iteration decrease for $\tau = 2$ calculated in the proof of
\cref{thm:Vk_nonincreasing_>2} is very close to $0$, which already
suggested a slow convergence.
We therefore see that the value $c(\alpha)$ could be a good indicator
for the speed of practical convergence as well.

On the other hand, in terms of the total running time, we observe that
by utilizing the problem structure wisely, our method also achieves
significantly lower time cost per iteration.
This shows that although it is always possible to solve the problem
\cref{eq:problem} by existing methods by considering less problem
structures (in the worst case, one can always just consider it as a
nonsmooth optimization problem and use a subgradient method to solve
it), wisely exploiting problem structures can lead to much higher
practical efficiency.

\begin{table}[tbh]
	\centering
 	\caption{Comparison between proximal DC (denoted by PDC) and our method for
	\cref{eq:kyfan}. $*$ denotes that the maximum iteration of $100000$ is reached,
 $\tau = t$ denotes
our method with the specified $\tau$, and $-$ denotes that this value
of $\tau$ is not applicable for this problem.}

 \ifdefined\submit
 \small
 \fi
	\begin{tabular}{@{}l|rr@{\hspace{2pt}}|rr@{\hspace{2pt}}|rr@{\hspace{2pt}}|rr@{\hspace{2pt}}|rr@{\hspace{2pt}}}
& \multicolumn{2}{c|}{a9a}& \multicolumn{2}{c|}{cpusmall\_scale}& \multicolumn{2}{c|}{ijcnn1}& \multicolumn{2}{c|}{phishing}& \multicolumn{2}{c}{heart}\\
& $m =$ & $n = $
& $m =$ & $n = $
& $m =$ & $n = $
& $m =$ & $n = $
& $m =$ & $n = $\\
& 32561 & 123 & 8192 & 12 & 49900 & 22 & 11055 & 68 & 270 & 13\\
\cmidrule{2-11}
 Method
 & iter & time& iter & time& iter & time& iter & time& iter & time\\
 \midrule
 PDC	 & 91859 & 67.21 & 37132 & 5.64 & 2942 & 1.92 & 7283 & 3.27 & \conditionalstar{*100000} & \conditionalstar{*2.42} \\
$\tau = 1$ & 91865 & 6.80 & 37133 & 1.34 & 2948 & 0.16 & 7282 & 0.40 & \conditionalstar{*100000} & \conditionalstar{*1.77} \\
$\tau = 1.1$ & 83514 & 6.02 & 33757 & 1.23 & 2680 & 0.14 & 6620 & 0.36 & 91675 & 1.56 \\
$\tau = 1.2$ & 76555 & 5.74 & 30944 & 1.18 & 2456 & 0.14 & 6069 & 0.34 & 84035 & 1.48 \\
$\tau = 1.3$ & 70664 & 5.05 & 28564 & 1.03 & 2267 & 0.13 & 5602 & 0.32 & 77571 & 1.42 \\
$\tau = 1.4$ & 65618 & 4.85 & 26524 & 1.10 & 2105 & 0.13 & 5202 & 0.30 & 72030 & 1.36 \\
$\tau = 1.5$ & 61242 & 4.64 & 24756 & 0.97 & 1965 & 0.12 & 4856 & 0.28 & 67228 & 1.20 \\
$\tau = 1.6$ & 57415 & 4.13 & 23209 & 0.86 & 1842 & 0.11 & 4553 & 0.26 & 63027 & 1.13 \\
$\tau = 1.7$ & 54038 & 4.09 & 21844 & 0.76 & 1733 & 0.11 & 4285 & 0.24 & 59319 & 1.17 \\
$\tau = 1.8$ & 51037 & 3.88 & 20630 & 0.74 & 1637 & 0.10 & 4047 & 0.23 & 56024 & 1.01 \\
$\tau = 1.9$ & \bftab 48350 & \bftab 3.63 & \bftab 19545 & \bftab 0.68
& \bftab 1550 & \bftab 0.09 & \bftab 3835 & \bftab 0.22 & \bftab 53075
& \bftab 0.94 \\
$\tau = 2$ & - & - & - & - & 1746 & 0.11 & - & - & 97789 & 1.83 \\
\end{tabular}
	\label{tbl:lskyfan}
\end{table}

\section{Conclusion and Future Work}
\label{sec:conclusion}
In this work, we presented a new splitting algorithm
for the four-term optimization problem \eqref{eq:problem}.
Our algorithm generalizes the Davis-Yin splitting algorithm for
three-term optimization to allow for an additional nonsmooth term. We
derived stepsize estimates for the proposed algorithm that ensure
global subsequential convergence to stationary points of the objective
function. A notable implication of our results is the significant
improvement in the upper bound estimates for the stepsize of the DYS
algorithm that guarantee subsequential convergence.

\alert{Our numerical experiments demonstrate that larger stepsizes can
	significantly accelerate the convergence of our proposed
	algorithm, emphasizing the critical role of stepsize selection in
	practical performance. Previous works, such as \cite{WriNF08a,LuZ19a}, have
	shown that incorporating \cpmodifyok{adaptive stepsizes safeguarded
	by} line search techniques can improve the efficiency of PG and
	PDC, which are special cases of our framework.
	Our preliminary experiments suggest that such techniques, while not preserving iteration complexity guarantees, can substantially enhance empirical convergence speed.
Motivated by these findings, we are actively exploring extensions of
adaptive stepsize strategies to our more general algorithm, aiming to
further enhance efficiency while maintaining solution quality \cpmodifyok{and the
convergence rate guarantees}.}



\ifdefined\submit 
\bibliographystyle{siamplain}
\else
\fi 
\bibliography{bibfile}
\end{document}